\newtheorem {theorem}{Theorem}[section]
\newtheorem {lemma}[theorem]{Lemma}
\newtheorem {proposition}[theorem]{Proposition}
\newtheorem {corollary}[theorem]{Corollary}
\newtheorem {conjecture}[theorem]{Conjecture}
\newtheorem {definition}[theorem]{Definition}
\newtheorem {question}[theorem]{Question}
\newtheorem {fact}[theorem]{Fact}
\theoremstyle{remark}
\newtheorem {remark}[theorem]{Remark}
\newtheorem {example}[theorem]{Example}
\def\zz {{\mathbb{Z}}}
\def\rr {{\mathbb{R}}}
\def\cc {{\mathbb{C}}}
\def\C{\cc}
\def\R{\rr}
\def\Z {\zz}
\def\I {\mathcal{I}}
\def\sslash {/ \! /}
\def\rp {\mathbb{RP}}
\def\cp{\mathbb{CP}}
\def\D{\mathcal{D}}
\def\del{\partial}
\def\O{\mathcal{O}}
\def\E{\mathcal{E}}
\def\Q{\mathcal{Q}}
\def\g{\mathfrak{g}}
\def\Gad{G^{\operatorname{ad}}}
\def\HF {\mathit{HF}}
\def\HP{\mathit{HP}}
\def\HH{\mathbb{H}}
\def\Cat{\mathcal{C}}
\def\cs{\operatorname{c}}
\def\sl {{\operatorname{SL}(2, \cc)}}
\def\psl {{\operatorname{PSL}(2, \cc)}}
\def\sln {{\operatorname{SL}(n, \cc)}}
\def\gl{\operatorname{GL}(2, \cc)}
\def\su {{\operatorname{SU}(2)}}
\def\pu {{\operatorname{PU}(2)}}
\def\Sym{{\operatorname{Sym}}}
\def\st{\operatorname{st}}
\def\iso{\operatorname{iso}}
\def\Rep {R}
\def\k{\mathbf{k}}
\def\RepIrr{R_{\operatorname{irr}}}
\def\Rs {\mathscr{R}}
\def\RsIrr{\Rs_{\operatorname{irr}}}
\def\Ad {\operatorname{Ad}}
\def\im {\operatorname{im}}
\def\Char {X}
\def\Chars {\mathscr{X} \!}
\def\CharIrr {\Char_{\operatorname{irr}}}
\def\CharsIrr {\mathscr{X}_{\operatorname{irr}}}
\def\O {\mathcal{O}}
\def\P {\mathcal{P}}
\def\Reptw{\Rep_{\operatorname{tw}}}
\def\Xtw{\Char_{\operatorname{tw}}}
\def\Hom {\operatorname{Hom}}
\def\tr {\operatorname{Tr }}
\def\rk {\operatorname{rk }}
\def\ind{\operatorname{ind}}
\def\Stab{\mathscr{S}}
\def\stab{\operatorname{Stab}}
\def\can{\operatorname{can}}
\def\Perv {\operatorname{Perv}}
\def\Cb {\mathcal{C}^{\bullet}}
\def\irr{\operatorname{irr}}
\DeclareMathOperator{\sRHom}{\mathcal{RH}\hspace{-1pt}\mathit{om}}
\DeclareMathOperator{\Ext}{\operatorname{Ext}}
\def\HPcs{\HP_{\! \cs}}
\def\HPf{\HP_{\! \#}}
\def\HPfcs{\HP_{\! \#, \cs}}
\def\tw{\operatorname{tw}}
\def\DD{\mathbb{D}}
\def\L{\mathcal{L}}
\def\cH{\mathcal{H}}
\def\M{\mathcal{M}}
\def\tL{\tilde{L}}
\def\omegac{\omega_{\C}}
\def\Pb {\mathcal{P}^\bullet}
\def\Diff{\operatorname{Diff}}
\def\Qb{\mathcal{Q}^\bullet}
\def\id{\operatorname{id}}
\def\PVb {\mathcal{PV}^\bullet}
\def\red {\operatorname{red}}
\def\Crit{\operatorname{Crit}}
\def\Span{\operatorname{Span}}
\begin{document}

\title{A sheaf-theoretic model for $\sl$ Floer homology}

\author[Mohammed Abouzaid]{Mohammed Abouzaid}
\thanks {MA was supported by NSF grants DMS-1308179, DMS-1609148, and DMS-1564172, and by the Simons Foundation through its ``Homological Mirror Symmetry'' Collaboration grant.}
\address {Department of Mathematics, Columbia University, 2990 Broadway, MC 4406 \\
New York, NY 10027}
\email {abouzaid@math.columbia.edu}

\author[Ciprian Manolescu]{Ciprian Manolescu}
\thanks {CM was supported by NSF grants DMS-1402914 and DMS-1708320.}
\address {Department of Mathematics, UCLA, 520 Portola Plaza, Box 951555\\ Los Angeles, CA 90095}
\email {cm@math.ucla.edu}

\begin{abstract}
Given a Heegaard splitting of a three-manifold $Y$, we consider the $\sl$ character variety of the Heegaard surface, and two complex Lagrangians associated to the handlebodies. We focus on the smooth open subset corresponding to irreducible representations. On that subset, the intersection of the Lagrangians is an oriented d-critical locus in the sense of Joyce. Bussi associates to such an intersection a perverse sheaf of vanishing cycles. We prove that in our setting, the perverse sheaf is an invariant of $Y$, i.e., it is independent of the Heegaard splitting. The hypercohomology of the perverse sheaf can be viewed as a model for (the dual of) $\sl$ instanton Floer homology. We also present a framed version of this construction, which takes into account reducible representations. We give explicit computations for lens spaces and Brieskorn spheres, and discuss the connection to the Kapustin-Witten equations and Khovanov homology. 
\end {abstract}

\maketitle

\section{Introduction}

In \cite{Floer}, Floer associated to each homology three-sphere $Y$ an invariant $I_*(Y)$, called instanton homology. This is the homology of a complex generated by (perturbations of) irreducible flat $\su$ connections on $Y$, with the differential counting solutions to the $\su$ anti-self-dual (ASD) Yang-Mills equations on the cylinder $\R \times Y$. As shown by Taubes \cite{TaubesCasson}, the Euler characteristic of $I_*(Y)$ equals twice the Casson invariant from \cite{AMCasson}. The main motivation for instanton homology was to allow a definition of relative Donaldson invariants for four-manifolds with boundary; see \cite{DonaldsonBook} for results in this direction. Apart from this, instanton homology has had applications to three-manifold topology---most notably the proof of property P for knots by Kronheimer and Mrowka \cite{KMPropP}. 

Recently, there has been a surge of interest in studying the ASD equations with noncompact groups $\sl$ or $\psl$ instead of $\su$, as well as their topological twist, the Kapustin-Witten equations; cf. \cite{KapustinWitten, Taubes3, Taubes4}. In particular, Witten has a proposal for intepreting the Khovanov homology of knots or links in $\R^3$ in terms of solutions to a set of partial differential equations in five dimensions (usually called the Haydys-Witten equations); cf. \cite{FiveBranes, Haydys}. In this proposal, the Jones polynomial is recovered by counting solutions to the Kapustin-Witten equations on $\R^3 \times \R_+$, with certain boundary conditions; see \cite{FiveBranes, GaiottoWitten, WittenLectures1, WittenLectures2}.

In view of these developments, one would like to construct a variant of instanton Floer homology using the group $\sl$ instead of $\su$. In a sense, the $\sl$ case should be simpler than $\su$. For the unperturbed equations with complex gauge groups, physicists expect ``no instanton corrections'', i.e., no contributions to the Floer differential. Indeed, if there are only finitely many $\sl$ irreducible flat connections, and all are transversely cut out, then they must be in the same relative grading. In that case, the $\sl$ Floer homology could just be defined as the free Abelian group generated by those connections, in a single grading. However, for arbitrary $3$-manifolds, the moduli space (character variety) of $\sl$ flat connections can be higher dimensional, singular, and even non-reduced as a scheme. Furthermore, instanton corrections appear when we perturb the equations, and we run into difficult non-compactness issues. Thus, defining $\sl$ Floer homology directly using gauge theory seems challenging. 

Nevertheless, the lack of instanton corrections for the unperturbed equations indicates that $\sl$ Floer homology could be defined algebraically, without counting solutions to PDEs. The purpose of this paper is to use sheaf theory to give such a definition. 

Our construction draws inspiration from the Atiyah-Floer conjecture \cite{AtiyahFloer}. (See \cite{DaemiFukaya} for recent progress in the direction of this conjecture.) The Atiyah-Floer conjecture states that the  $\su$ instanton homology $I_*(Y)$ can be recovered as the Lagrangian Floer homology of two Lagrangians associated to a Heegaard decomposition for $Y$, with the ambient symplectic manifold being the moduli space of flat $\su$ connections on the Heegaard surface $\Sigma$. In a similar fashion, we consider the moduli space $\Char(\Sigma)$ of flat $\sl$ connections on $\Sigma$ (or, equivalently, representations of $\pi_1(\Sigma)$ into $\sl$). The space $\Char(\Sigma)$ is called the {\em character variety} of $\Sigma$. It contains an open set $\CharIrr(\Sigma) \subset \Char(\Sigma)$ corresponding to irreducible flat connections. For the three-manifold $Y$, we can define $\Char(Y)$ and $\CharIrr(Y)$ in an analogous way.  The space $\CharIrr(\Sigma)$ is a smooth, complex symplectic manifold. Inside $\CharIrr(\Sigma)$ we have two complex Lagrangians $L_0$ and $L_1$, associated to the two handlebodies.  The intersection $L_0 \cap L_1$ is isomorphic to $\CharIrr(Y)$; cf. Lemma~\ref{lem:idab} (a) below.

We could try to take the Lagrangian Floer homology of $L_0$ and $L_1$ inside $\CharIrr(\Sigma)$, but non-compactness issues appear here just as in the gauge-theoretic context. Instead, we make use of the structure of $\CharIrr(Y)$ as a derived scheme. Joyce \cite{Joyce} introduced the theory of d-critical loci, which is a way of encoding some information from derived algebraic geometry in terms of classical  data. The intersection of two algebraic Lagrangians in an algebraic symplectic manifold is a d-critical locus; see \cite[Corollary 2.10]{PTVV} and \cite[Corollary 6.8]{BBJ}. If the Lagrangians come equipped with spin structures, the d-critical locus gets an orientation in the sense of \cite[Section 2.5]{Joyce}. Furthermore, to any oriented d-critical locus one can associate a perverse sheaf of vanishing cycles, cf. \cite{BBDJS}; in the case of an algebraic Lagrangian intersection, the hypercohomology of this sheaf is conjectured to be the same as the Lagrangian Floer cohomology; see \cite[Remark 6.15]{BBDJS}. Furthermore, in the complex analytic context, Bussi \cite{Bussi} gave a simpler way of constructing the perverse sheaf for complex Lagrangian intersections. 

In our setting, we apply Bussi's construction to the Lagrangians $L_0, L_1 \subset \CharIrr(\Sigma)$. The resulting perverse sheaf of vanishing cycles is denoted $P^{\bullet}_{L_0, L_1}$. Our main result is:

\begin{theorem}
\label{thm:main1}
Let $Y$ be a closed, connected, oriented three-manifold. Then, the object $P^{\bullet}(Y):=P^{\bullet}_{L_0, L_1}$ (constructed from a Heegaard decomposition, as above) is an invariant of the three-manifold $Y$, up to canonical isomorphism in a category of perverse sheaves $\Perv'(\CharIrr(Y))$.

As a consequence, its hypercohomology
$$  \HP^*(Y) := \HH^*( P^{\bullet}(Y))$$
is also an invariant of $Y$, well-defined up to canonical isomorphism in the category of $\Z$-graded Abelian groups. 
\end{theorem}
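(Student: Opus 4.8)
The plan is to establish invariance under the (Reidemeister–Singer) moves relating any two Heegaard splittings of $Y$, namely isotopy of the attaching curves, handleslides, and stabilization. The first reduction is to recall that all Heegaard diagrams for a fixed $Y$ are related by a finite sequence of such moves, so it suffices to produce, for each elementary move, a canonical isomorphism between the two perverse sheaves $P^{\bullet}_{L_0,L_1}$ on $\CharIrr(Y)$; one then checks that these canonical isomorphisms are compatible (e.g.\ independent of the order in which commuting moves are applied, and coherent under composition), so that the total isomorphism between any two diagrams is itself canonical. The target category being perverse sheaves on a fixed space, ``canonical'' should mean: the isomorphism is pinned down by naturality in the data, so that any two chains of moves between the same pair of diagrams induce the same morphism.

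For the isotopy and handleslide moves, the strategy is to observe that the ambient complex symplectic manifold $\CharIrr(\Sigma)$ does not change, and only the Lagrangians $L_0, L_1$ vary. An isotopy of attaching curves induces a Hamiltonian (or at least Lagrangian) isotopy of the corresponding $L_i$ inside $\CharIrr(\Sigma)$; a handleslide likewise changes the handlebody presentation but not the handlebody, hence gives another Hamiltonian-isotopic (in fact, I expect, canonically identified) Lagrangian with the same image in many cases, or in general an isotopic one. The key input is a \emph{deformation-invariance} property of Bussi's perverse sheaf of vanishing cycles: if $L_0$ and $L_1^t$ is a family of complex Lagrangians with $L_0 \cap L_1^t$ of locally constant topological type (or, more robustly, under the full strength of the results on perverse sheaves on d-critical loci and their behaviour in families), then $P^{\bullet}_{L_0,L_1^0}\cong P^{\bullet}_{L_0,L_1^1}$ canonically. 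Here one must be careful that the intersection $\CharIrr(Y)$ itself is being identified along the isotopy — which it is, since $\CharIrr(Y)$ is intrinsic to $Y$ — and that the d-critical structure and orientation (coming from the spin structures on the $L_i$, which are the handlebodies' canonical ones) are carried along. This is where the bulk of the technical work lies.

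The stabilization move is the subtler one, because it changes $\Sigma$ (genus goes up by one) and hence changes the ambient manifold $\CharIrr(\Sigma)$ and the Lagrangians entirely. The plan is to show that stabilizing corresponds, on the sheaf-theoretic side, to taking a product with a ``trivial'' local model — concretely, adding a cancelling pair of $\alpha$/$\beta$ curves introduces one new handle-pair whose contribution to the character variety is, near the relevant locus, a quadratic (Morse) factor, so that Bussi's sheaf gets tensored with the vanishing cycle sheaf of a nondegenerate quadratic form, which is (up to the appropriate Tate twist / shift, absorbed into the orientation data) the constant sheaf on a point. More precisely, one realizes $\CharIrr(\Sigma')$, for the stabilized surface, together with $L_0', L_1'$, in such a way that in a neighbourhood of $\CharIrr(Y)$ it is symplectomorphic to $\CharIrr(\Sigma)$ times a standard symplectic $\C^2$ with the two Lagrangians being $L_i$ times a pair of transverse linear Lagrangians, and then invoke the multiplicativity of the perverse sheaf of vanishing cycles under such products (Thom–Sebastiani, cf.\ \cite{BBDJS}). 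The main obstacle I anticipate is precisely this local identification: proving that the stabilized data is, near $\CharIrr(Y)$, genuinely a product with the standard model — including matching the d-critical charts and the orientations induced by the spin structures — rather than merely abstractly isomorphic. Handling the orientation/spin bookkeeping through stabilization, so that the isomorphism is sign-canonical and not just canonical up to $\pm 1$, is the delicate point; once that is in place, canonicity of the whole assignment follows by the usual connectivity argument for the Reidemeister–Singer groupoid of Heegaard moves.
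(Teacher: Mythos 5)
Your coarse plan (Reidemeister--Singer reduction plus a Thom--Sebastiani-type argument for stabilization) points in the right direction, and your flag that the spin/orientation bookkeeping is the delicate step is correct. But there is a real gap in the coherence argument. You write that after producing isomorphisms for each elementary move, ``one then checks that these canonical isomorphisms are compatible (e.g.\ independent of the order in which commuting moves are applied, and coherent under composition),'' but that does not identify the relation that must actually be verified. The Heegaard-splitting groupoid has nontrivial \emph{loops}, and canonicity requires showing these act by the identity on $P^\bullet(Y)$; the essential one is the \emph{simple handleswap} of Juh\'asz--Thurston: a composition of stabilizations, destabilizations, and a nontrivial mapping class on $\Sigma''$ that returns to the same splitting $\cH''$. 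This is the reason Juh\'asz--Thurston's machinery exists. The paper verifies handleswap invariance by recasting it as the equality of two stabilization isomorphisms built from different quadratic forms on the same normal bundle $N_{MM''}$, interpolating between the forms by a continuous family, and using that morphisms in $\Perv(\CharIrr(Y))$ over $\Z$ are discrete to conclude the family of induced maps is constant. Nothing in your proposal addresses this, so canonicity remains unproven.

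Separately, two misconceptions worth flagging. First, the deformation-invariance lemma you invoke for isotopies and handleslides is neither proven nor needed: the Lagrangians $L_i = \CharIrr(U_i) \subset \CharIrr(\Sigma)$ depend only on the handlebodies $U_i$, not on any choice of attaching curves, so handleslides and curve isotopies leave $L_0$, $L_1$, and hence $P^\bullet_{L_0,L_1}$ literally unchanged. The paper deliberately works from Heegaard \emph{splittings} rather than diagrams for exactly this reason, and reduces ambient isotopies of $\Sigma$ in $Y$ to compositions of stabilizations and destabilizations. Second, your stabilization model is dimensionally wrong: $\dim_\C \CharIrr(\Sigma)$ increases by $6$ under stabilization (the normal fiber is $H^1(T^2;\g) \cong \g^2$), not $2$, so the extra factor is not $\C^2$. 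Moreover the paper does not exhibit a local product $\CharIrr(\Sigma') \cong \CharIrr(\Sigma) \times (\text{model})$ near $\CharIrr(Y)$; it works instead with a decomposition of the symplectic normal bundle $N_{MM'} = V_0 \oplus V_1$ into Lagrangian subbundles (arising from coisotropics associated to compression bodies), a fiberwise quadratic form $q$, and Bussi's embedding theorem. This normal-bundle formulation is also what makes the sign-canonicity tractable: the spin structures on $V_0, V_1$ are unique because $\CharIrr(\Sigma)$ is simply connected, which fixes the isomorphism on the nose rather than only up to $\pm 1$.
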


The content of Theorem~\ref{thm:main1} is that $P^{\bullet}_{L_0, L_1}$ is independent of the Heegaard decomposition used to construct it. The proof requires checking invariance under a stabilization move, as well as a naturality result similar to that proved by Juh\'asz, Thurston and Zemke in \cite{JuhaszThurston}, for Heegaard Floer homology. Naturality means that as we relate a Heegaard diagram to another by a sequence of moves, the induced isomorphism is independent of the sequence we choose. Moreover, we want the diffeomorphism group of $Y$ to act on our invariant $P^{\bullet}_{L_0, L_1}$. Since the diffeomorphism group can act non-trivially on $\CharIrr(Y)$ itself, we cannot simply view $P^{\bullet}_{L_0, L_1}$ as an object in the usual category of perverse sheaves $\Perv(\CharIrr(Y))$, where the morphisms cover the identity on $\CharIrr(Y)$. Rather, we use a slightly different category $\Perv'(\CharIrr(Y))$, which will be introduced in Definition~\ref{def:pervp} below.

\begin{remark}
\label{rem:PTVV}
One can construct the perverse sheaves $P^{\bullet}(Y)$ more directly, without Heegaard decompositions, by resorting to the theory of shifted symplectic structures in derived algebraic geometry developed by Pantev-To\"en-Vaqui\'e-Vezzosi \cite{PTVV}. In this paper, we preferred to use the methods from \cite{Bussi} since they are more concrete, and make computations easier. In particular, they do not require any knowledge of derived algebraic geometry.
\end{remark}

We call $\HP^*(Y)$ the {\em sheaf-theoretic $\sl$ Floer cohomology} of $Y$. If an $\sl$ Floer cohomology for $Y$ can be defined (using either gauge theory or symplectic geometry), we conjecture that $\HP^*(Y)$ would be isomorphic to it.

Note that, whereas $\su$ instanton homology is only defined for integer homology spheres, the invariant $\HP^*(Y)$ is defined for all closed, connected, oriented three-manifolds.

We call the Euler characteristic
$$ \lambda^{P}(Y) := \chi(\HP^*(Y))$$
 the {\em full (sheaf-theoretic) $\sl$ Casson invariant} of $Y$. We use the name {\em full} to distinguish it from the $\sl$ Casson invariant defined by Curtis in \cite{Curtis}, which counts only the isolated irreducible flat connections.
 
Our construction of $\HP^*(Y)$ has some limitations too, because it only involves irreducible flat connections. In the $\su$ context, one theory that takes into account the reducibles is the {\em framed instanton homology} $\mathit{FI}_*(Y)$ considered by Kronheimer and Mrowka in \cite{KMknots}. This is defined for any three-manifold $Y$, and its construction uses connections in an admissible $\pu$ bundle over $Y \# T^3$. Framed instanton homology was further studied by Scaduto in \cite{Scaduto}, where it is denoted $I^\#(Y)$. Moreover, symplectic counterparts to framed instanton homology were defined in \cite{WWFloerField} and \cite{MWextended}.

Consider a Heegaard decomposition of a three-manifold $Y$, as before. Following Wehrheim and Woodward \cite[Section 4.4]{WWFloerField}, we take the connected sum of the Heegaard surface $\Sigma$ (near a basepoint $z$) with a torus $T^2$, and obtain a higher genus surface $\Sigma^\#$. On $\Sigma^\#$ we consider the moduli space of {\em twisted} flat $\sl$ connections, $\Char_{\tw}(\Sigma^{\#})$, which is a smooth complex symplectic manifold. There are smooth Lagrangians $L_0^{\#} , L_1^{\#} \subset  \Char_{\tw}(\Sigma^{\#})$ coming from the two handlebodies. Their intersection is the representation variety $\Rep(Y) := \Hom(\pi_1(Y), \sl)$. Bussi's construction yields a perverse sheaf of vanishing cycles $P^{\bullet}_{L^{\#}_0, L^{\#}_1}$ over $\Rep(Y)$.

\begin{theorem}
\label{thm:framed}
Let $Y$ be a closed, connected, oriented three-manifold, and $z \in Y$ a basepoint. Then, the object $P^{\bullet}_\#(Y,z):=P^{\bullet}_{L_0^\#, L_1^\#}$ is an invariant of the three-manifold $Y$ and the basepoint $z$, up to canonical isomorphism in a category of perverse sheaves $\Perv'(\Rep(Y))$.

As a consequence, its hypercohomology
$$  \HPf^*(Y,z) := \HH^*( P^{\bullet}_{\#}(Y, z))$$
is also an invariant of $(Y, z)$, well-defined up to canonical isomorphism in the category of $\Z$-graded Abelian groups. 
\end{theorem}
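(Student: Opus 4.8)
The plan is to mimic the proof of Theorem~\ref{thm:main1}, transporting the entire argument to the stabilized surface $\Sigma^{\#}$ and the twisted character variety. Concretely, I would proceed as follows. First, I would recall that any two Heegaard splittings of $Y$ become isotopic after a finite sequence of stabilizations and diffeomorphisms (Reidemeister--Singer), and moreover—following Juh\'asz--Thurston \cite{JuhaszThurston}—that once one fixes the basepoint $z$, the relevant moves and isotopies can be organized so that the resulting identifications are canonical (independent of the chosen path in the space of Heegaard diagrams). Since $\Sigma^{\#} = \Sigma \# T^2$ is formed near $z$, every such move on $\Sigma$ away from $z$ induces a corresponding move on $\Sigma^{\#}$, and the connect-sum with the fixed $T^2$ does not interfere; the basepoint is genuinely part of the data, which is why the invariant depends on $(Y,z)$.

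Second, I would establish the two local invariance statements. (a) \emph{Stabilization invariance}: when the Heegaard splitting is stabilized, $\Sigma^{\#}$ gains a handle, $\Char_{\tw}(\Sigma^{\#})$ acquires two extra symplectic dimensions, and $L_0^{\#}, L_1^{\#}$ each pick up an extra Lagrangian factor meeting transversely in a single point; the perverse sheaf of vanishing cycles for a transverse intersection at a point is the constant sheaf $\k$ in the appropriate degree, and the Thom--Sebastiani property for vanishing cycles (used by Bussi \cite{Bussi}, and in the oriented d-critical setting by \cite{BBDJS}) gives $P^{\bullet}_{L_0^{\#}, L_1^{\#}}$ back unchanged up to canonical isomorphism, once one checks that the spin structures (hence the orientation data on the d-critical locus) behave correctly under the stabilization—this is the same orientation bookkeeping as in Theorem~\ref{thm:main1}. (b) \emph{Isotopy/naturality}: for an isotopy of the attaching curves (away from $z$), $L_i^{\#}$ moves by a Hamiltonian isotopy in $\Char_{\tw}(\Sigma^{\#})$, and one invokes the invariance of the perverse sheaf of vanishing cycles under such deformations (Bussi's theorem, respectively the d-critical-locus invariance of \cite{BBDJS}), together with the fact that the identification of the d-critical locus $\Rep(Y)$ with itself induced by the isotopy is the identity on the underlying scheme. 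The point of passing to twisted connections is precisely that $\Char_{\tw}(\Sigma^{\#})$ is a genuine smooth symplectic \emph{manifold} (no reducibles to excise) and $L_0^{\#}\cap L_1^{\#} = \Rep(Y)$ is compact, so Bussi's construction applies on the nose.

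Third, I would assemble these local statements into a global naturality argument in the style of \cite{JuhaszThurston}: one builds a groupoid whose objects are pointed Heegaard diagrams for $(Y,z)$ and whose morphisms are generated by the elementary moves, shows the perverse-sheaf assignment is a functor to the category of perverse sheaves on $\Rep(Y)$ (well-definedness of the functor on relations is where stabilization invariance and the Cerf-theory connectedness of the move graph enter), and concludes that since this groupoid is connected with trivial fundamental group at the level of the induced isomorphisms, $P^{\bullet}_{\#}(Y)$ is well-defined up to \emph{canonical} isomorphism. Taking hypercohomology is functorial, so the statement about $\HPf^{*}(Y)$ follows immediately.

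I expect the main obstacle to be the same one as in Theorem~\ref{thm:main1}: the careful tracking of orientation data (spin structures on $L_0^{\#}, L_1^{\#}$ and the resulting orientation of the oriented d-critical locus) through stabilizations and isotopies, so that the isomorphisms produced are not merely isomorphisms of perverse sheaves but are \emph{canonical} and compatible under composition. A secondary subtlety is verifying that the connect-sum region around $z$ can be kept rigidly disjoint from all the handle-slides, stabilizations, and isotopies appearing in a Reidemeister--Singer sequence, so that the $T^2$-factor—and with it the twisting—plays a purely passive role; this is a mild adaptation of the Wehrheim--Woodward setup in \cite{WWFloerField} and of the basepoint bookkeeping in \cite{JuhaszThurston}, but it must be done with enough care that the dependence on $z$ (and independence of all other choices) is transparent.
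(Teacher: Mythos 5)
Your proposal has the right architecture — stabilization invariance plus Juh\'asz--Thurston naturality — but it omits the step that carries almost all the weight in the paper's naturality argument: \emph{handleswap invariance}. Theorem~\ref{thm:JT} (the Juh\'asz--Thurston criterion, as adapted to Heegaard splittings) requires verifying four conditions: functoriality, commutativity, continuity, and handleswap invariance. Your phrase ``well-definedness of the functor on relations is where stabilization invariance and the Cerf-theory connectedness of the move graph enter'' gestures at the criterion without identifying its nontrivial content. Reidemeister--Singer (Cerf theory) gives connectedness of the move graph and hence existence of \emph{some} isomorphism between any two descriptions; it does not by itself make that isomorphism canonical. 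The extra relation one must verify is the simple handleswap $g: \cH'' \to \cH''$ (Figure~\ref{fig:handleswap1}), and the paper proves it by reducing $F(g)=\id$ to the equality of two compositions of stabilization maps $\Stab'\circ\Stab$ and $\hat\Stab'\circ\hat\Stab$. These differ only in the choice of quadratic form $q$ on the normal Lagrangian sub-bundle $W_0$ entering Proposition~\ref{prop:stabL}; one then interpolates the two forms by a continuous family $q_t$ and invokes the discreteness of $\Hom$'s in $\Perv$ over $\Z$ to conclude the two maps agree. None of this is present or implicit in your proposal.

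A secondary issue is your step (b). You propose to handle isotopies of attaching curves by invoking ``invariance of the perverse sheaf of vanishing cycles under Hamiltonian isotopy of $L_i^{\#}$.'' This is both unnecessary and not obviously correct. Unnecessary because $L_i^{\#}$ is defined directly from the handlebody $U_i$ (it is the image of $\Rep(U_i)$), not from a curve system, so curve isotopies and handleslides induce literally the identity; and ambient isotopies of the Heegaard surface are compositions of stabilizations and destabilizations (Remark~\ref{rem:ambient}). Not obviously correct because when $L_1^{\#}$ is moved the intersection $L_0^{\#}\cap L_1^{\#}$ is a different subspace, so the ``before'' and ``after'' perverse sheaves do not live on the same space; a canonical identification requires extra input that you do not supply. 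The paper's ``continuity'' condition is different: it is the statement that a diffeomorphism of $\Sigma^{\#}$ isotopic to the identity induces the identity map, which follows because the $\Diff(\Sigma^{\#})$-action on $\Xtw(\Sigma^{\#})$ factors through the mapping class group. Finally, a small factual slip: $\Rep(Y)$ is an affine variety and is typically noncompact (e.g.\ $\Rep(S^1\times S^2)\cong G$), so ``$L_0^{\#}\cap L_1^{\#}=\Rep(Y)$ is compact'' is false; fortunately Bussi's construction does not require compactness, so this does not damage the argument.

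Your stabilization step (a) is essentially correct in spirit and matches the paper's use of Proposition~\ref{prop:stabL} (itself built on Bussi's Theorem~1.13, the Thom--Sebastiani-type stabilization of $\mathcal{PV}^\bullet$ by a nondegenerate quadratic form) together with the observation that the relevant normal Lagrangian bundles $V_0,V_1$ are trivial $\g$-bundles over the simply connected base, so their spin structures are unique and compatible.
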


We call  $\HPf^*(Y, z)$ the {\em framed sheaf-theoretic $\sl$ Floer cohomology} of $Y$. When we are only interested in its isomorphism class, we will drop $z$ from the notation and write $\HPf^*(Y)$ for $\HPf^*(Y, z)$.

To compute the invariants defined in this paper, the main tool we use is the following.

\begin{theorem}
\label{thm:main2}
Let $Y$ be a closed, connected, oriented three-manifold, $\Rep(Y)$ its $\sl$ representation variety, and $\Char(Y) = \Rep(Y)\sslash \psl$ its character variety, with the open subset $\CharIrr(Y) \subset \Char(Y)$ consisting of irreducibles. We also let $\Rs(Y)$ be the corresponding representation scheme, and $\CharsIrr(Y) \subset \Chars(Y)$ the character scheme. Let $z \in Y$ be a basepoint.

(a) If $\CharsIrr(Y)$ is regular, then $P^{\bullet}(Y)$ is a (degree shifted) local system on $\CharIrr(Y)$, with stalks isomorphic to $\Z$.

(b) If $\Rs(Y)$ is regular, then $P^{\bullet}_{\#}(Y, z)$ is a (degree shifted) local system on $\Rep(Y)$, with stalks isomorphic to $\Z$. 
\end{theorem}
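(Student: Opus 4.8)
The key point is that for a complex Lagrangian intersection $L_0 \cap L_1$ inside a complex symplectic manifold $M$, Bussi's construction produces locally, near a point $p$, a holomorphic function $f$ on a complex manifold $U$ (the ``Darboux chart'') whose critical locus $\Crit(f)$ is isomorphic as a complex analytic space to a neighborhood of $p$ in $L_0 \cap L_1$, and the perverse sheaf $P^\bullet_{L_0,L_1}$ is locally the perverse sheaf of vanishing cycles $PV_f^\bullet$ of $f$. The regularity hypothesis in part (a) says precisely that the relevant scheme $\CharsIrr(Y)$ is smooth, i.e., the local model $\Crit(f)$ is a smooth complex manifold of some dimension $k$. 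So the plan is: (i) reduce, via Bussi's local Darboux description, to the statement that if $f\colon U \to \C$ has smooth critical locus $Z = \Crit(f)$, then $PV_f^\bullet$ is a shifted local system on $Z$ with stalk $\Z$; (ii) establish this local statement by a holomorphic Morse–Bott lemma, writing $f$ near $Z$ in suitable coordinates as a nondegenerate quadratic form in the normal directions plus a constant along $Z$; (iii) apply the Thom–Sebastiani / Morse–Bott computation of vanishing cycles to identify $PV_f^\bullet$ with $\Z_Z[k]$ twisted by the orientation local system of the negative-normal bundle (which, since we are over $\C$, is a $\Z/2$-local system — in the oriented d-critical locus setting it is further trivialized or absorbed, giving stalk $\Z$). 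Part (b) is identical, replacing $\CharIrr(\Sigma)$ by $\Char_{\tw}(\Sigma^\#)$ and $\CharsIrr(Y)$ by $\Rs(Y)$: the same local Darboux model and the same Morse–Bott vanishing cycle computation apply verbatim.

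In more detail, for step (ii): if $Z \subset U$ is the smooth critical locus of the holomorphic $f$, then $df$ vanishes on $Z$ and $f|_Z$ is locally constant, so after subtracting that constant we may assume $f|_Z = 0$; the Hessian of $f$ in directions transverse to $Z$ may be degenerate a priori, but regularity of the scheme $\Crit(f)$ (not just the reduced analytic space) forces the Hessian to be \emph{nondegenerate} on the normal bundle $N_{Z/U}$ — indeed the scheme $\Crit(f)$ is cut out by $df$, and its tangent space at $p$ is the kernel of the Hessian, so ``$\Crit(f)$ reduced and smooth of the expected dimension equal to the kernel dimension'' is exactly the condition that the Hessian is nondegenerate on a complement. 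Then the holomorphic Morse–Bott lemma gives coordinates $(z_1,\dots,z_k, w_1,\dots,w_m)$ on a neighborhood of any point of $Z$, with $Z = \{w = 0\}$, in which $f = w_1^2 + \cdots + w_m^2$.

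For step (iii): the nearby and vanishing cycle functors are local, and for $f = \sum w_j^2$ on $\C^k \times \C^m$ (constant in the $z$ directions) one has $\phi_f(\Z_U[\dim U]) \cong \Z_Z[k]$ up to a shift, by the classical computation of the Milnor fiber of a nondegenerate quadratic form (the reduced cohomology of the Milnor fiber of $\sum_{j=1}^m w_j^2$ is $\Z$ concentrated in degree $m-1$), together with the Thom–Sebastiani isomorphism to peel off the quadratic factors one at a time. Globalizing over $Z$, the ambiguity in the Morse–Bott coordinates lies in $O(m,\C)$, whose component group $\Z/2$ acts on the stalk $\Z$ through the determinant, producing a priori a $\Z/2$-local system; this is precisely the orientation data, and the chosen orientation of the d-critical locus (coming from the spin structures on the handlebodies, cf.\ \cite[Section 2.5]{Joyce} and \cite{Bussi}) trivializes it, so that $P^\bullet(Y)$ is genuinely a $\Z$-local system placed in a single cohomological degree (shifted by the locally constant function $\dim_{\C}\CharsIrr(Y)$, which may jump between components).

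The main obstacle is step (ii): one must carefully translate ``regularity of the representation/character \emph{scheme}'' into ``nondegeneracy of the transverse Hessian of the Bussi potential $f$.'' This requires knowing that the scheme structure on $\Crit(f)$ produced by Bussi's construction agrees with the natural scheme structure on $\CharsIrr(Y)$ (resp.\ $\Rs(Y)$) as a Lagrangian intersection $L_0 \cap L_1$ — i.e., that $\Crit(f)$ and $L_0 \times_M L_1$ are isomorphic as schemes, not merely as reduced analytic spaces. This scheme-theoretic compatibility is part of what Bussi's theorem provides (the d-critical structure refines the classical intersection scheme), but invoking it correctly, and checking that the expected dimension bookkeeping matches, is the delicate part; the rest is the standard Morse–Bott picture for perverse sheaves of vanishing cycles.
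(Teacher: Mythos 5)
Your proposal arrives at the same Morse--Bott local model ($f = \sum_j w_j^2$) and the same vanishing-cycle computation (Example~\ref{ex:PVclean}) that the paper uses, but the route from scheme regularity to that local model is different and leaves a genuine gap. You want to identify $\Crit(f)$ as a complex analytic space with $\CharsIrr(Y)$ (resp.\ $\Rs(Y)$), so that regularity of the latter translates into nondegeneracy of the transverse Hessian of Bussi's potential $f$; you correctly flag the needed compatibility of scheme structures --- $\Crit(f) \cong L_0 \times_M L_1 \cong \CharsIrr(Y)$ as analytic spaces, not merely as reduced varieties --- as the delicate point, but you leave it open, and it is not obviously harmless (it involves identifying a GIT quotient scheme with a fiber product of quotients, for instance). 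The paper sidesteps this step entirely. Lemmas~\ref{lem:cleanChar} and~\ref{lem:cleanRep} prove directly that $\rho$ is a regular point of $\CharsIrr(Y)$ (resp.\ $\Rs(Y)$) if and only if the Lagrangians intersect cleanly at $\rho$: the Zariski tangent space to the character scheme at $[\rho]$ is $H^1(Y;\Ad\rho)$ by Proposition~\ref{prop:TC}(b), and a Mayer--Vietoris computation identifies this with $T_{[\rho]}L_0 \cap T_{[\rho]}L_1$ (for $\Rs(Y)$ one uses $Z^1$ in place of $H^1$); regularity is then \emph{visibly} the same condition as cleanness, with no need to match $\Crit(f)$ to the character scheme. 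The clean-intersection hypothesis then feeds into Lemma~\ref{lem:lmClean} (a Lagrangian-intersection Darboux normal form) and Proposition~\ref{prop:BClean} to produce the $f = \sum w_j^2$ model directly. So both arguments converge on the same local picture, but the paper's derivation from the hypothesis is shorter and avoids exactly the compatibility issue you identify; you would need to either prove that compatibility, or (better) replace your step~(ii) by the tangent-space argument of Lemmas~\ref{lem:cleanChar}--\ref{lem:cleanRep}.

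One further remark: your last paragraph asserts that the chosen orientation of the d-critical locus ``trivializes'' the $\Z/2$-monodromy arising from the $O(m,\C)$ ambiguity in the Morse--Bott coordinates. This is not the right picture. The role of the orientation (spin) data in Bussi's construction is to define the principal $\Z/2$-bundle $Q_{P,U,f,i}$ that makes the locally defined sheaves $\PVb_{U,f}$ \emph{glue} consistently, cf.\ \eqref{eq:omegaL0}; tensoring by $Q_{P,U,f,i}$ changes the monodromy of the resulting local system rather than killing it, and $P^\bullet(Y)$ can very well be a nontrivial local system. The theorem asserts only ``local system with stalks $\Z$,'' and identifying that local system in examples requires additional arguments; see Lemma~\ref{lem:trivial} and the discussion in Section~\ref{sec:Examples}.
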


In some situations, we can show that the local systems appearing in Theorem~\ref{thm:main2} are trivial. This allows us to do concrete calculations for various classes of three-manifolds. We give a few examples below, with $\Z_{(0)}$ denoting the group $\Z$ in degree $0$.

\begin{theorem}
\label{thm:lens}
Consider the lens space $L(p, q)$ with $p$ and $q$ relatively prime. Then $\HP^*(L(p,q)) =0$ and
$$ \HPf^*(L(p,q)) \cong
\begin{cases}
\Z_{(0)} \oplus H^{*+2}(S^2; \Z)^{\oplus (p-1)/2} & \text{ if $p$ is odd,}\\ 
 \Z_{(0)}^{\oplus 2} \oplus H^{*+2}(S^2; \Z)^{\oplus (p-2)/2} & \text{ if $p$ is even.}
 \end{cases}$$
\end{theorem}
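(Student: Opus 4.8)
The vanishing $\HP^*(L(p,q)) = 0$ is immediate: since $\pi_1(L(p,q)) \cong \Z/p$ is abelian, every homomorphism $\pi_1(L(p,q)) \to \sl$ has abelian image and is therefore reducible, so $\CharIrr(L(p,q)) = \emptyset$, whence $P^{\bullet}(L(p,q)) = 0$ and its hypercohomology vanishes. (Note that the answer depends on $p$ only, not on $q$.)

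For $\HPf^*$, the plan is to apply Theorem~\ref{thm:main2}(b), so the first task is to verify that the representation scheme $\Rs(L(p,q))$ is regular. One has $\Rs(L(p,q)) = \{A \in \sl : A^p = I\}$, and at a representation $\rho$ its Zariski tangent space is the cocycle space $Z^1(\Z/p; \mathfrak{sl}_2(\C))$ for the adjoint action, while obstructions to smoothness lie in $H^2(\Z/p; \mathfrak{sl}_2(\C))$. Since $\Z/p$ is finite and $\mathfrak{sl}_2(\C)$ is a $\C$-vector space, $H^i(\Z/p; \mathfrak{sl}_2(\C)) = 0$ for all $i \ge 1$; hence $\Rs(L(p,q))$ is smooth, and Theorem~\ref{thm:main2}(b) shows that $P^{\bullet}_{\#}(L(p,q))$ is a degree-shifted local system on $\Rep(L(p,q))$ with stalk $\Z$.

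The second task is to describe $\Rep(L(p,q)) = \Hom(\Z/p, \sl) = \{A \in \sl : A^p = I\}$ topologically. Every such $A$ is semisimple, conjugate to $\operatorname{diag}(\zeta^k, \zeta^{-k})$ with $\zeta = e^{2\pi i/p}$, and two of these are conjugate exactly when the exponents agree up to sign modulo $p$. The central representations $\pm I$ — only $I$ when $p$ is odd, both $I$ and $-I$ when $p$ is even — are isolated reduced points, since there the adjoint action is trivial and so $Z^1 = 0$. Each remaining conjugacy class is the orbit $Q_k$ of a regular semisimple element; it is closed (semisimple orbits are closed) and two-dimensional (there $\dim Z^1 = \dim \mathfrak{sl}_2(\C) - \dim \mathfrak{sl}_2(\C)^{\rho} = 3 - 1 = 2$), hence a connected component of the smooth scheme $\Rs(L(p,q))$. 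Writing out its defining equations exhibits $Q_k$ as a smooth affine quadric surface in $\C^3$, diffeomorphic to $TS^2$, so $Q_k \simeq S^2$. There are $(p-1)/2$ such orbits when $p$ is odd and $(p-2)/2$ when $p$ is even, and $\Rep(L(p,q))$ is the disjoint union of the central point(s) and these copies of $Q_k \simeq S^2$.

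Finally I would compute the hypercohomology componentwise. Smoothness of $\Rs(L(p,q))$ means the Lagrangian intersection $L_0^{\#} \cap L_1^{\#}$ is clean along each component $Z$, so by the Thom--Sebastiani description of vanishing cycles $P^{\bullet}_{\#}$ restricts on $Z$ to $\underline{\Z}_Z[\dim_{\C} Z] \otimes \mathcal{L}_Z$ for a rank-one local system $\mathcal{L}_Z$ with $\Z/2$ monodromy (the orientation sheaf); since every $Z$ here — a point, or a copy of $S^2$ — is simply connected, $\mathcal{L}_Z$ is trivial. Hence $P^{\bullet}_{\#}$ restricts to $\Z_{(0)}$ over each central point and to $\underline{\Z}_{Q_k}[2]$ over each $Q_k$, the latter contributing $H^{*+2}(Q_k; \Z) = H^{*+2}(S^2; \Z)$; summing over components yields the stated formula. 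The step requiring the most care is the reducedness and smoothness of $\Rs(L(p,q))$ together with the identification of the correct degree shift $\dim_{\C} Z$ on each component from the clean-intersection local model; once those are established, enumerating the conjugacy classes, recognizing the affine quadrics $Q_k \simeq S^2$, and trivializing the local systems via simple connectivity are all routine.
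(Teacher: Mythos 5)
Your argument is correct and follows essentially the same route as the paper's proof, which is carried out via Example~\ref{ex:Lpq}: $\HP^*$ vanishes because $\pi_1$ is abelian, $\Rs(L(p,q))$ is regular, $\Rep(L(p,q))$ is a disjoint union of central points and copies of $TS^2\simeq S^2$, and the degree-shifted local system from Theorem~\ref{thm:main2}(b) is trivial on each component by simple connectedness. The only variation is in how regularity is established: the paper computes $H^1(\Z/p;\Ad\rho)=0$ directly from the explicit formula for cyclic group cohomology and then invokes the chain of inequalities preceding Definition~\ref{def:redreg}, whereas you appeal to the general vanishing of finite-group cohomology with $\C$-coefficients in positive degree (thereby also getting $H^2=0$ and unobstructedness of deformations) --- the same underlying fact, packaged slightly differently.
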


\begin{theorem}
\label{thm:Brieskorn}
For the Brieskorn spheres $\Sigma(p,q,r)$ with $p,q,r$ pairwise relatively prime, we have
$$ \HP^*(\Sigma(p,q,r)) \cong \Z^{\oplus (p-1)(q-1)(r-1)/4}_{(0)}$$
and
$$ \HPf^*(\Sigma(p,q,r)) \cong \Z_{(0)} \oplus H^{*+3}(\rp^3; \Z)^{\oplus (p-1)(q-1)(r-1)/4}.$$
\end{theorem}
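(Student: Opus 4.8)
The plan is to feed the representation theory of Brieskorn spheres into Theorem~\ref{thm:main2}. First I would record the structure of the varieties. Writing $\pi_1(\Sigma(p,q,r))$ as a central extension $1 \to \Z\langle h\rangle \to \pi_1 \to \Delta(p,q,r) \to 1$ of the triangle group, Schur's lemma forces $\rho(h)=\pm I$ for every irreducible $\rho\colon \pi_1 \to \sl$, so such $\rho$ are essentially projective representations of $\Delta(p,q,r)$; by the rigidity of triangle groups (equivalently the genus-zero three-puncture Deligne--Simpson problem) there are only finitely many of them, all infinitesimally rigid, and counting the admissible conjugacy classes at the three cone points gives exactly $N:=(p-1)(q-1)(r-1)/4$ of them. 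Since $H_1(\Sigma(p,q,r);\Z)=0$, the only reducible representation is the trivial one: a reducible representation has abelian, hence trivial, semisimplification, and is then unipotent, hence trivial. For irreducible $\rho$ the stabilizer in $\psl$ is trivial and the orbit is closed, so
$$ \Rep(\Sigma(p,q,r)) \;=\; \{\mathbf 1\}\ \sqcup\ \coprod_{i=1}^{N}\psl\cdot\rho_i,\qquad \psl\cdot\rho_i\cong\psl, $$
is the decomposition into connected components, while $\CharIrr(\Sigma(p,q,r))$ is a set of $N$ reduced points.

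Next I would verify the regularity hypotheses of Theorem~\ref{thm:main2}. Rigidity of $\rho_i$ means $H^1(\pi_1;\mathfrak{sl}_2^{\Ad\rho_i})=0$; together with $H^0=0$ this gives $\dim T_{\rho_i}\Rs=\dim Z^1=\dim B^1=3=\dim_{\rho_i}\Rep$, so $\Rs$, and hence also $\CharsIrr$, is regular along the irreducible locus. At the trivial representation $T_{\mathbf 1}\Rs=Z^1(\pi_1;\mathfrak{sl}_2)=\Hom(H_1(Y;\Z),\mathfrak{sl}_2)=0$, so $\mathbf 1$ is a reduced isolated point. Thus $\CharsIrr(\Sigma(p,q,r))$ and $\Rs(\Sigma(p,q,r))$ are both regular, and Theorem~\ref{thm:main2} applies: $P^\bullet(\Sigma(p,q,r))$ is a degree-shifted local system with stalk $\Z$ on the $N$ points of $\CharIrr$, and $P^\bullet_\#(\Sigma(p,q,r))$ is a degree-shifted local system with stalk $\Z$ on $\Rep$.

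Then I would trivialize the local systems and assemble. On the discrete scheme $\CharIrr(\Sigma(p,q,r))$ and on the point component $\{\mathbf 1\}\subset\Rep$ a rank-one $\Z$-local system is automatically trivial. On an orbit $\psl\cdot\rho_i\cong\psl$ the perverse sheaf equals $\mathcal L_i[3]$ for a rank-one $\Z$-local system $\mathcal L_i$ whose monodromy is that of the $\Z/2$ orientation local system of the vanishing-cycle sheaf; but the orientation data on the oriented d-critical locus $\Rep(Y)$ (built from the spin structures on $L^\#_0,L^\#_1$ as in Theorem~\ref{thm:framed}) trivializes that orientation local system, so $\mathcal L_i$ is trivial. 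Since the perverse shift is by complex dimension ($0$ on a point, $3$ on $\psl$), and $\psl$ deformation retracts onto $\operatorname{SO}(3)\cong\rp^3$, taking hypercohomology gives $\HP^*(\Sigma(p,q,r))=\Z^{\oplus N}_{(0)}$ and $\HPf^*(\Sigma(p,q,r))=\Z_{(0)}\oplus H^{*+3}(\rp^3;\Z)^{\oplus N}$.

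The substantive part is the first step --- pinning down the \emph{exact} cardinality $N=(p-1)(q-1)(r-1)/4$ of the irreducible $\sl$ character scheme of a Seifert fibered homology sphere, together with its reducedness, i.e.\ the infinitesimal rigidity of every irreducible $\sl$ representation of the Brieskorn group. This needs a careful analysis of representations of the triangle group (or an appeal to the Boden--Curtis computation of the $\sl$ Casson invariant, which itself refines the $\su$ count of Fintushel--Stern). A lesser point to check carefully is that the local system on the three-dimensional orbits is really trivial --- not the nontrivial rank-one $\Z$-local system on $\rp^3$, which exists because $\pi_1(\rp^3)=\Z/2$ --- a vanishing that must be forced either by the orientation data or, alternatively, by equivariance for the conjugation action of $\psl$.
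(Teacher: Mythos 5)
Your route is the same as the paper's proof: feed the structure of $\Rep(\Sigma(p,q,r))$ from Boden--Curtis into Theorem~\ref{thm:main2}, verify regularity via the rigidity $H^1(\Sigma(p,q,r);\Ad\rho)=0$, and then compute hypercohomology of the resulting (shifted) rank-one local systems. The paper's Example~\ref{ex:Br3} simply cites \cite{BodenCurtis} for the count $N=(p-1)(q-1)(r-1)/4$ and for the vanishing of $H^1$; your triangle-group/Schur's-lemma sketch is a reasonable outline of what that reference proves, and your observation that the trivial representation is the unique reducible and is a regular isolated point of $\Rs(Y)$ is correct.

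The one place your argument is wrong is the triviality of the rank-one $\Z$-local system $\mathcal{L}_i$ on each $\psl$-orbit $Q\cong\O_{\rho_i}$. You assert that the orientation data (the spin structures on $L_0^\#$, $L_1^\#$) trivializes $\mathcal{L}_i$, relegating $\psl$-equivariance to an ``alternative.'' The orientation data by itself does not suffice. Lemma~\ref{lem:vanish-cycl-desc-1} gives $\Pb_{L_0^\#,L_1^\#}|_Q\cong|W^+|[3]$ \emph{once} the spin-structure compatibility holds, where $|W^+|$ is the local system of orientations of the positive real part of the transverse quadratic form; this is intrinsic to the vanishing-cycle sheaf $\PVb_{U,f}$ and a priori need not be trivial on $Q\simeq\rp^3$, which has $H^1(-;\Z/2)=\Z/2$. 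The spin-structure condition only kills the other twist, the principal $\Z_2$-bundle $Q_{P,U,f,i}$ of square roots of $\Theta$ (which is trivial because the $L_i^\#$ are simply connected). To dispose of $|W^+|$, Lemma~\ref{lem:trivial} uses the conjugation action of $\Gad$ on $\Rep(\Sigma)$, which preserves both $L_i^\#$ and acts transitively on $Q$, to produce an equivariant polarization of the symplectic normal bundle for which $W^+$ is a visibly trivial real sub-bundle. So $\Gad$-equivariance is not an alternative route but the required additional input; your primary argument for triviality does not close, and the hedge at the end of your proposal is in fact where the proof lives.
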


Let $Y$ be a closed oriented $3$-manifold. Recall that a smoothly embedded surface $S \subset Y$ is called {\em incompressible} if there is no disk $D$ embedded in $M$ such that $D \cap S = \del D$ and $\del D$ does not bound a disk in $S$. The manifold $Y$ is called {\em sufficiently large} if it contains a properly embedded, two-sided, incompressible surface. (Haken manifolds are those that are sufficiently large and irreducible.) By the work of Culler and Shalen \cite{CullerShalen}, when $Y$ is not sufficiently large, the character variety $\CharIrr(Y)$ has only zero-dimensional components; compare \cite[Proposition 3.1]{Curtis}. From here we easily obtain the following result.

\begin{theorem}
\label{thm:NSL}
For three-manifolds $Y$ that are not sufficiently large, the invariant $\HP^*(Y)$ is supported in degree zero.
\end{theorem}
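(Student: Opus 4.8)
The plan is to combine two facts: a dimension bound on the character variety due to Culler and Shalen, and the elementary observation that on a zero-dimensional variety every perverse sheaf lives in a single cohomological degree. Recall from Theorem~\ref{thm:main1} that $\HP^*(Y) = \HH^*(P^{\bullet}(Y))$ with $P^{\bullet}(Y)$ a perverse sheaf on $\CharIrr(Y)$, so it suffices to prove that $P^{\bullet}(Y)$ is concentrated in degree zero and then that its hypercohomology is too.

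First I would invoke the Culler--Shalen result recalled just above the statement: since $Y$ is not sufficiently large, every irreducible component of $\CharIrr(Y)$ is zero-dimensional (see \cite{CullerShalen} and \cite[Proposition~3.1]{Curtis}). As $\CharIrr(Y)$ is of finite type over $\C$, it then has only finitely many points, and its underlying topological space --- the only thing a perverse sheaf sees --- is discrete, even though the scheme $\CharsIrr(Y)$ may be non-reduced.

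Next I would record the following standard fact: if $Z$ is a complex variety whose underlying space is a finite discrete set, then in the bounded constructible derived category of $Z$ the middle perverse $t$-structure is simply the (product of the) standard $t$-structure(s). Concretely, each point of $Z$ is open, so for a perverse sheaf $P^{\bullet}$ on $Z$ the costalk $i_x^{!}P^{\bullet}$ agrees with the stalk $i_x^{*}P^{\bullet}$, and then the support and cosupport conditions both become conditions on that stalk, forcing $\mathcal{H}^i(i_x^{*}P^{\bullet}) = 0$ for all $i \ne 0$. (This argument is valid over $\Z$; note that it uses costalks directly rather than Verdier self-duality, which fails over $\Z$ in the presence of torsion.) Applying it with $Z = \CharIrr(Y)$ gives $P^{\bullet}(Y) \simeq \mathcal{F}[0]$ for a sheaf $\mathcal{F}$ of abelian groups on the finite set $\CharIrr(Y)$, and therefore $\HP^*(Y) \cong \bigoplus_{x \in \CharIrr(Y)} \mathcal{F}_x$ is supported in degree zero.

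This argument is short, and I do not anticipate a real obstacle; the one point deserving care is the bookkeeping of the perversity normalization, i.e.\ checking that the degree shift built into Bussi's vanishing-cycle construction (chosen so that $P^{\bullet}_{L_0,L_1}$ is perverse on $\CharIrr(\Sigma)$, hence on $\CharIrr(Y)$) is exactly the one under which perverse sheaves on a zero-dimensional base sit in cohomological degree zero. This compatibility is, however, already encoded in Theorem~\ref{thm:main2}(a): when $\CharsIrr(Y)$ happens in addition to be regular and zero-dimensional, that theorem delivers a local system in degree zero, and the argument above simply drops the regularity hypothesis, at the cost of allowing $\mathcal{F}$ to be an arbitrary sheaf on the point set $\CharIrr(Y)$.
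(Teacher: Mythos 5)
Your proof is correct and fills in exactly the argument the paper leaves implicit: the paper states the Culler--Shalen dimension bound just before Theorem~\ref{thm:NSL} and then says ``from here we easily obtain the following result'' without spelling out the details, so there is no separate written proof to compare against beyond that sketch. Your reduction --- $\CharIrr(Y)$ is a finite set of points because it is 0-dimensional and of finite type, and on a discrete space the perverse $t$-structure on $D^b_c$ coincides with the standard one (with the costalk $=$ stalk observation handling the $\Z$ coefficients correctly, without invoking Verdier self-duality) --- is precisely what the paper has in mind, and your remark about the non-reduced scheme $\CharsIrr(Y)$ being invisible to the underlying perverse sheaf is an appropriate sanity check.
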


We also have the following relation between our invariants and the Heegaard genus, which was pointed out to us by  Ikshu Neithalath.

\begin{theorem}
\label{thm:ikshu}
If $Y$ admits a Heegaard splitting of genus $g$, then
$$ \HP^k(Y) \neq 0 \ \Rightarrow -3g+3 \leq k  \leq 3g-3$$
and
$$ \HP^k_{\#}(Y) \neq 0 \ \Rightarrow -3g \leq  k \leq 0.$$
\end{theorem}

Character varieties of $\sl$ representations play an important role in three-dimensional topology, for example in the paper \cite{CullerShalen} mentioned above, in the work of Morgan and Shalen \cite{MorganShalen1, MorganShalen2, MorganShalen3}, and in the proof of the cyclic surgery theorem by Culler, Gordon, Luecke and Shalen \cite{CGLS}. It would be interesting to explore if there are more connections between $\HP^*$ and classical three-manifold topology, beyond Theorems~\ref{thm:NSL} and \ref{thm:ikshu}.

The organization of the paper is as follows. In Section~\ref{sec:Varieties} we gather a few facts about representation and character varieties. In Section~\ref{sec:Motivation} we introduce the complex  Lagrangians $L_0, L_1, L_0^{\#}, L_1^{\#}$, and present in more detail the motivation coming from the Atiyah-Floer conjecture.  Section~\ref{sec:sheaves} contains a review of Bussi's construction of perverse sheaves associated to complex Lagrangian intersections. In Section~\ref{sec:prop} we discuss the behavior of Bussi's perverse sheaf under stabilization, and in Section~\ref{sec:Clean} we study the perverse sheaf in the case where the Lagrangians intersect cleanly. In Section~\ref{sec:Invariant} we define our invariants and prove Theorems~\ref{thm:main1} and \ref{thm:framed}.  Section~\ref{sec:Examples} contains the proofs of Theorems~\ref{thm:main2}, ~\ref{thm:lens} and \ref{thm:Brieskorn}, together with a few other calculations. In Section~\ref{sec:Further} we describe further directions for research, and connections to other fields.

\medskip
\textbf{Acknowledgements.} We have benefited from discussions with Ian Agol, Hans Boden, Francis Bonahon, Marco Castronovo, Ben Davison, Michael Kapovich, Yank{\i} Lekili, Sam Lewallen, Jake Rasmussen, Rapha\"el Rouquier, Nick Rozenblyum, Pierre Schapira, Paul Seidel, Adam Sikora, Ivan Smith, Edward Witten, and Chris Woodward. We are particularly indebted to Dominic Joyce for explaining to us his work. We would also like to thank Laurent C\^ot\'e, Ikshu Neithalath, and the referees, for their comments on a previous version of this paper.

\section{Representation varieties and character varieties}
\label{sec:Varieties}

In this section we gather some facts about representations of finitely generated groups into $\sl$, as well as examples. We recommend the books  \cite{LubotzkyMagid}, \cite{KapovichBook} and the articles \cite{Goldman}, \cite{CullerShalen}, \cite{Heusener}, \cite{Sikora}
for more details about this topic.

Throughout the paper (except where otherwise noted, in Section~\ref{sec:otherG}), we let $G$ denote the group $\sl$, with Lie algebra $\g= \mathfrak{sl}(2, \C)$ and center $Z(G) = \{\pm I\}$. We let $\Gad = G/Z(G)=\psl$ be the adjoint group of $G$.

We denote by $B \subset G$ the Borel subgroup of $G$ consisting of upper-triangular matrices, and by $D$ the subgroup consisting of diagonal matrices. We also let $B_{P} \subset B$ be the subgroup of $B$ consisting of parabolic elements, i.e. those of the form $\pm \begin{pmatrix} 1 & a \\ 0 & 1 \end{pmatrix}$, with $a \in \C$. Note that $D$ and $B_P$ are both Abelian, with intersection $D \cap B_P = Z(G)$.

\subsection{Representation varieties}
\label{sec:varieties}
Let $\Gamma$ be a finitely generated group. Its {\em representation variety} is defined as
$$  \Rep(\Gamma) = \Hom(\Gamma, G).$$
If $\Gamma$ has $k$ generators, by viewing $G$ as a subset of $\text{GL}(2,\C)\cong \C^4$ we find that $\Rep(\Gamma)$ is an affine algebraic subvariety of $\C^{4k}$. Indeed, the relations in $\Gamma$, together with the determinant one conditions, produce a set of polynomial equations in $4k$ variables,
$$ f_i(x_1, \dots, x_{4k})=0,$$
so that their common zero set is $\Rep(\Gamma)$. Here, the subscripts $i$ take values in some index set $\I$.

We can also consider the {\em representation scheme}
\begin{equation}
\label{eq:Spec}
 \Rs(\Gamma) = \operatorname{Spec} \bigl( \C[x_1, \dots, x_{4k}] / (f_i)_{i \in \I} \bigr).
 \end{equation}
The affine scheme $\Rs(\Gamma)$ is independent of the presentation of $\Gamma$, up to canonical isomorphism. The scheme $\Rs(\Gamma)$ may be non-reduced; the corresponding reduced scheme gives the variety $\Rep(\Gamma)$.

The group $\Gad$ acts on $\Rep(\Gamma)$ by conjugation. Given a representation $\rho: \Gamma \to G$, we denote by $\stab(\rho) \subseteq \Gad$ its stabilizer, and by $\O_{\rho} \cong \Gad/\stab(\rho)$ its orbit.

We distinguish five kinds of representations $\rho: \Gamma \to G$:
\begin{enumerate}[(a)]
\item {\em irreducible}, those such that the corresponding representation on $\C^2$ does not preserve any line; in other words, those that are not conjugate to a representation into the Borel subgroup $B$. An irreducible representation has trivial stabilizer. Its orbit is a copy of $\Gad=\psl$, which is topologically $\rp^3 \times \R^3$;\smallskip

\item {\em non-Abelian reducible}, those that are conjugate to a representation with image in $B$, but not into one with image in $B_{P}$ or $D$. Such representations have trivial stabilizer also;\smallskip

\item {\em parabolic non-central}, those that are conjugate to a representation with image in $B_{P}$, but not in $\{\pm I\}$. Such representations have stabilizer $B_{P}/\{\pm I\} \cong \C$. Their orbit $\O_{\rho} \cong G/B_P$ is a bundle over $G/B \cong \mathbb{CP}^1$ with fiber $B/B_P \cong \C^*$. In fact, $\O_{\rho}$ is diffeomorphic to $\rp^3 \times \R$;\smallskip

\item {\em diagonal non-central}, those that are conjugate to a representation with image in $D$, but not in $\{\pm I\}$. Such representations have stabilizer $D/\{\pm I\} \cong \C^*$. Their orbit is a copy of $\Gad/\C^*$, topologically $TS^2$;\smallskip

\item {\em central}, those with image in $Z(G)=\{\pm I\}$. Their stabilizer is the whole group $\Gad$, and their orbit is a single point. \smallskip
\end{enumerate}

Representations of types (b)-(e) are called {\em reducible}. Those of types (a), (d) and (e) are {\em completely reducible}, or {\em semi-simple}. Those of types (c), (d) and (e) have Abelian image, and we call them {\em Abelian}.

We will denote by $\RepIrr(\Gamma) \subset \Rep(\Gamma)$ the (Zariski open) subset consisting of irreducible representations, and similarly by $\RsIrr(\Gamma) \subset \Rs(\Gamma)$ the open subscheme associated to irreducibles. (For a proof of openness, see for example \cite[Proposition 27]{Sikora}.)

Given a representation $\rho: \Gamma \to G$, we denote by $\Ad \rho := \Ad \circ \rho$ the associated adjoint representation on $\g$.  A map $\xi: \Gamma \to\g$ is called a {\em 1-cocycle} if
\begin{equation}
\label{eq:1cocycle}
 \xi(xy) = \xi(x)+\Ad_{\rho(x)} \xi(y), \ \ \text{for all } x, y \in \Gamma.
 \end{equation}
Further, $\xi$ is a {\em 1-coboundary} if it is of the form
$$ \xi(x) = u - \Ad_{\rho(x)} u$$
for some $u \in \g$. The space of 1-cocycles is denoted $Z^1(\Gamma; \Ad \rho)$ and the space of 1-coboundaries is denoted $B^1(\Gamma; \Ad \rho)$. Their quotient is the group cohomology 
$$H^1(\Gamma; \Ad \rho) = Z^1(\Gamma; \Ad \rho)/B^1(\Gamma; \Ad \rho)$$
When $\Gamma =\pi_1(M)$ for a topological space $M$, we can identify $H^1(\Gamma; \Ad \rho)$ with $H^1(M; \Ad \rho)$, the first cohomology of $M$ with coefficients in the local system given by $\Ad \rho$.

By a result of Weil \cite{Weil}, the Zariski tangent space to the scheme $\Rs(\Gamma)$ at a closed point $\rho$ is identified with $Z^1(\Gamma; \Ad \rho)$. We can also consider the (possibly smaller) Zariski tangent space to the variety $\Rep(\Gamma)$. In general, we have a chain of inequalities
\begin{equation}
\label{eq:chain}
\dim \O_{\rho} = \dim B^1(\Gamma; \Ad \rho) \leq \dim_{\rho} \Rep(\Gamma) \leq \dim T_{\rho}\Rep(\Gamma) \leq \dim T_{\rho}\Rs(\Gamma) = \dim Z^1(\Gamma; \Ad \rho),
\end{equation}
where $\dim_{\rho}$ denotes the local dimension at $\rho$. Compare \cite[Chapter 2]{LubotzkyMagid} and \cite[Lemma 2.6]{Heusener}. 

Following \cite{Heusener} and \cite{Sikora}, we say:
\begin{definition}
\label{def:redreg}
$(a)$  The representation $\rho$ is called {\em reduced} if $\dim T_{\rho}\Rep(\Gamma) = \dim Z^1(\Gamma; \Ad \rho)$ i.e., the last inequality in \eqref{eq:chain} is an equality.
This is the same as asking for the scheme $\Rs(\Gamma)$ to be reduced at $\rho$.

$(b)$ The representation $\rho$ is called {\em regular} (or {\em scheme smooth}) if  $\dim_{\rho} \Rep(\Gamma) =\dim Z^1(\Gamma; \Ad \rho)$, i.e., the last two inequalities in \eqref{eq:chain} are equalities. This is the same as asking for the scheme $\Rs(\Gamma)$ to be regular (i.e., smooth) at $\rho$.
\end{definition}

Note that if $H^1(\Gamma; \Ad \rho)=0$, then from \eqref{eq:chain} we see that $\rho$ is regular. In fact, in that case, any representation sufficiently close to $\rho$ is actually conjugate to $\rho$; see \cite{Weil}.

\subsection{Character varieties}
Let us consider again the action of $\Gad$ on the representation variety $\Rep(\Gamma)$. The {\em character variety} of $\Gamma$ is defined to be the categorical quotient
$$ \Char(\Gamma) = \Rep(\Gamma) \sslash \Gad.$$
If we let $\Rep'(\Gamma) \subset \Rep(\Gamma)$ denote the subset consisting of completely reducible representations, the categorical quotient can be constructed explicitly as 
$$ \Char(\Gamma) =\Rep'(\Gamma)/\Gad.$$
See \cite[Theorem 1.27]{LubotzkyMagid} or \cite[Section 7]{Sikora}. 

There is also a {\em representation scheme} 
$$\Chars(\Gamma) = \Rs(\Gamma) \sslash \Gad.$$
In terms of the notation in \eqref{eq:Spec}, we have
$$\Chars(\Gamma) =   \operatorname{Spec} \bigl( \C[x_1, \dots, x_{4k}] / (f_i)_{i \in \I}  \bigr)^{\Gad}.$$
The reduced scheme associated to $\Chars(\Gamma)$ is the character variety $\Char(\Gamma)$. See \cite[Section 12]{Sikora} for more details.

We denote by $\CharIrr(\Gamma) = \RepIrr(\Gamma)/\Gad  \subset \Char(\Gamma)$ the open subvariety made of classes of irreducible representations. Similarly, there is an open subscheme $\CharsIrr(\Gamma)$ of $\Chars(\Gamma)$, corresponding to irreducible representations.

By \cite[Corollary 1.33]{LubotzkyMagid}, the conjugacy class of a completely reducible representation $\rho \in \Rep'(\Gamma)$ is determined by its character, 
$$\chi_{\rho}: \Gamma \to \C, \ \ \chi_{\rho}(g) = \tr(\rho(g)).$$
For each $g \in G$, we can define a regular function 
\begin{equation}
\label{eq:taug}
\tau_g: R(\Gamma) \to \C, \ \tau_g( \rho) = \chi_{\rho}(g).
\end{equation}
Let $T$ be the ring generated by the functions $\tau_g$; this is the coordinate ring of $ \Char(\Gamma)$; cf. \cite[1.31]{LubotzkyMagid}. Using the identities
$$ \tau_{g} \tau_h = \tau_{gh} + \tau_{gh^{-1}},$$ 
one can prove that, if $g_1, g_2, \dots, g_n$ are generators of $\Gamma$, then the $2^n-1$ functions 
$$ \tau_{g_{i_1} \dots g_{i_k}}, \ 1 \leq k \leq n, \ 1 \leq i_1 < \dots < i_k \leq n,$$  generate $T$. This gives a closed embedding of $\Char(\Gamma)$ into an affine space $\C^N$, where $N = 2^n-1$. See \cite[Proposition 1.4.1 and Corollary 1.4.5]{CullerShalen} and \cite[Proposition 4.4.2]{ShalenSurvey}.

With regard to tangent spaces, we have the following:
\begin{proposition}[cf. Theorems 53 and 54 in \cite{Sikora}] 
\label{prop:TC}
$(a)$ Let $\rho \in \Rep(\Gamma)$ be a completely reducible representation. Then, the projections 
$\Rep(\Gamma) \to \Char(\Gamma)$ and  $\Rs(\Gamma) \to \Chars(\Gamma)$ induce natural linear maps
$$ \phi: T_{\rho} \Rep(\Gamma)/B^1(\Gamma; \Ad \rho) \to T_{[\rho]}\Char(\Gamma)$$
and
$$ \Phi: H^1(\Gamma; \Ad \rho) \to T_{[\rho]}\Chars(\Gamma).$$

$(b)$ If $\rho$ is irreducible, then $\phi$ and $\Phi$ are isomorphisms.

$(c)$ If $\rho$ is completely reducible and regular, then
$$ \dim T_0(H^1(\Gamma; \Ad \rho) \sslash \stab(\rho)) \cong T_{[\rho]} \Char(\Gamma) = T_{[\rho]} \Chars(\Gamma),$$
where we considered the natural action of $\stab(\rho)$ on group cohomology.
\end{proposition}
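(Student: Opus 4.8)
The plan is to prove the three parts in order, relying on Weil's identification of the Zariski tangent space $T_\rho \Rs(\Gamma) \cong Z^1(\Gamma;\Ad\rho)$, the chain of inequalities \eqref{eq:chain}, and standard GIT. For part (a), the maps $\phi$ and $\Phi$ should simply be the differentials of the projection maps $\Rep(\Gamma)\to\Char(\Gamma)$ and $\Rs(\Gamma)\to\Chars(\Gamma)$, restricted along the inclusion of tangent spaces; the only thing to check is that the tangent space to the orbit $\O_\rho$, which is $B^1(\Gamma;\Ad\rho)$ by \eqref{eq:chain}, maps to zero, so that $d\pi$ descends to the quotient by $B^1$ (resp. by $B^1$ modulo the part already killed, i.e. to $H^1$ on the scheme side). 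This is immediate since the projection is constant on orbits and their tangent directions. The identification of $T_\rho\Rs(\Gamma)/B^1 = Z^1/B^1 = H^1(\Gamma;\Ad\rho)$ with the source of $\Phi$ uses Weil again plus the definition of group cohomology recalled just before Definition~\ref{def:redreg}.

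For part (b), when $\rho$ is irreducible, $\stab(\rho)$ is trivial (type (a) in the list of Section~\ref{sec:varieties}), so the $\Gad$-action on $\RepIrr(\Gamma)$ is free; moreover every irreducible representation is stable in the GIT sense (\cite[Theorem 1.27]{LubotzkyMagid}), so $\CharsIrr(\Gamma)$ is a geometric quotient of $\RsIrr(\Gamma)$ by the free action of the reductive group $\Gad$, and likewise for the varieties. For a free action of a reductive group the quotient map is a principal bundle (smooth, in particular), hence $d\pi$ is surjective with kernel exactly the tangent space to the orbit; thus $\phi$ and $\Phi$ are isomorphisms. I would cite \cite[Theorems 53 and 54]{Sikora} here, as the proposition statement already does, but sketch the above as the underlying reason.

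For part (c), assume $\rho$ completely reducible and regular. Regularity means the scheme $\Rs(\Gamma)$ is smooth at $\rho$, so near $\rho$ it agrees with the variety $\Rep(\Gamma)$ and $T_\rho\Rep(\Gamma) = Z^1(\Gamma;\Ad\rho)$; hence the two character schemes/varieties also agree locally and $T_{[\rho]}\Char(\Gamma) = T_{[\rho]}\Chars(\Gamma)$. Because $\rho$ is smooth and $\Gad$ is reductive, by Luna's slice theorem a $\stab(\rho)$-invariant slice $S$ through $\rho$ can be chosen with $T_\rho S$ identified with a $\stab(\rho)$-complement to $B^1(\Gamma;\Ad\rho)$ in $Z^1(\Gamma;\Ad\rho)$, i.e. with $H^1(\Gamma;\Ad\rho)$; and the character variety near $[\rho]$ is modeled on $S\sslash\stab(\rho)$, whose tangent space at the origin is $T_0\bigl(H^1(\Gamma;\Ad\rho)\sslash\stab(\rho)\bigr)$. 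Matching these gives the claimed identification. The main obstacle is part (c): one must make precise that the GIT quotient of the (possibly singular, since $\stab(\rho)$ need not act freely) affine $\stab(\rho)$-variety $H^1(\Gamma;\Ad\rho)$ genuinely models the character variety near $[\rho]$ — this is exactly the content of the étale slice theorem, so the real work is verifying its hypotheses (smoothness of $\Rs(\Gamma)$ at $\rho$, reductivity of $\stab(\rho)$, which holds since $\stab(\rho)$ is $\C$, $\C^*$, or $\Gad$ itself for types (c), (d), (e)) and the compatibility of the linearization with the cohomological description of the slice. Given that all of this is assembled in \cite{Sikora}, I would present parts (a)–(c) as a short argument with the slice theorem as the key input and refer to \cite[Theorems 53 and 54]{Sikora} for the details.
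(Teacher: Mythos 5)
The paper itself gives no proof of Proposition~\ref{prop:TC}; it is stated as a citation to Theorems~53 and~54 of Sikora's paper, with no argument supplied. So the comparison here is between your sketch and the standard argument in the literature, which your proposal does indeed follow: Weil's identification for part (a), freeness of the action on stable points for part (b), and Luna's \'etale slice theorem for part (c). That is the right road map.

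There is, however, a concrete error in your justification of the hypotheses for Luna's theorem in part (c). You write that reductivity of $\stab(\rho)$ ``holds since $\stab(\rho)$ is $\C$, $\C^*$, or $\Gad$ itself for types (c), (d), (e).'' Two things are wrong here. First, the additive group $\C$ is unipotent, not reductive, so if that were a possible stabilizer the slice theorem would fail. Second, and this saves the argument, type~(c) representations (those conjugate into $B_P$ but not into $\{\pm I\}$) are \emph{not} completely reducible — the completely reducible representations are those of types (a), (d), (e), as the paper states in Section~\ref{sec:varieties}. So the stabilizer of a completely reducible $\rho$ is always trivial, $\C^*$, or $\Gad$, all of which are reductive. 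You should cite the correct list of types; as written, the proposal asserts a false fact ($\C$ reductive) and invokes it for a case that does not arise. Once corrected, the application of Luna's theorem is fine: regularity of $\Rs(\Gamma)$ at $\rho$, closedness of the orbit, and reductivity of the stabilizer are exactly the inputs needed. The remaining step — identifying the tangent space of the slice at $\rho$ with a $\stab(\rho)$-invariant complement to $B^1$ in $Z^1$, hence with $H^1(\Gamma;\Ad\rho)$, and passing to the GIT quotient by $\stab(\rho)$ — is as you describe.
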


\begin{proposition}[cf. Corollary 55  in \cite{Sikora}]
\label{prop:red}
 An irreducible representation $\rho \in \Rep(\Gamma)$ is reduced if and only if the scheme $\Chars(\Gamma)$ is reduced at $[\rho]$.
\end{proposition}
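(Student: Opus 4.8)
The plan is to reduce the statement to the irreducible loci, where the quotient map $\RsIrr(\Gamma)\to\CharsIrr(\Gamma)$ is a principal $\Gad$-bundle, and then to transfer reducedness across that bundle. First I would record the routine reductions. Since irreducibility is a Zariski-open condition (Section~\ref{sec:varieties}), the point $\rho$ lies in the open subscheme $\RsIrr(\Gamma)\subseteq\Rs(\Gamma)$ and $[\rho]$ lies in the open subscheme $\CharsIrr(\Gamma)\subseteq\Chars(\Gamma)$. Because ``reduced at a point'' is a local condition, $\Rs(\Gamma)$ is reduced at $\rho$ if and only if $\RsIrr(\Gamma)$ is, and $\Chars(\Gamma)$ is reduced at $[\rho]$ if and only if $\CharsIrr(\Gamma)$ is. Moreover, by Definition~\ref{def:redreg}(a), the representation $\rho$ is reduced exactly when $\Rs(\Gamma)$ is reduced at $\rho$. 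So the proposition is equivalent to the assertion that $\RsIrr(\Gamma)$ is reduced at $\rho$ if and only if $\CharsIrr(\Gamma)$ is reduced at $[\rho]$.

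Next I would invoke the principal-bundle structure of the quotient map $q\colon\RsIrr(\Gamma)\to\CharsIrr(\Gamma)$. An irreducible representation is a stable point, with trivial $\Gad$-stabilizer and closed orbit (item (a) of Section~\ref{sec:varieties}), so $\Gad=\psl$ acts freely on $\RsIrr(\Gamma)$ and $q$ is an affine geometric quotient. For a free action of the reductive group $\Gad$ on an affine scheme of finite type over $\C$ (with no reducedness hypothesis needed), $q$ is then a $\Gad$-torsor, and, since $\Gad$ is smooth, it is \'etale-locally isomorphic to the projection $\CharsIrr(\Gamma)\times\Gad\to\CharsIrr(\Gamma)$; this follows from Luna's \'etale slice theorem / GIT, and its infinitesimal shadow is precisely Proposition~\ref{prop:TC}(b) together with the equality $\dim B^1(\Gamma;\Ad\rho)=\dim\O_\rho=\dim\Gad$ valid for irreducible $\rho$. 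In particular $q$ is smooth and faithfully flat, with fibers isomorphic to the smooth variety $\Gad$.

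Finally I would transfer reducedness across $q$ in both directions. If $\CharsIrr(\Gamma)$ is reduced at $[\rho]$, then since $q$ is smooth, hence flat with geometrically reduced fibers, the total space $\RsIrr(\Gamma)$ is reduced along $q^{-1}([\rho])$, in particular at $\rho$. Conversely, if $\RsIrr(\Gamma)$ is reduced at $\rho$, then faithful flatness of $q$ makes the local homomorphism $\O_{\CharsIrr(\Gamma),[\rho]}\to\O_{\RsIrr(\Gamma),\rho}$ injective, so a nilpotent of the source maps to a nilpotent of the reduced target and must vanish; hence $\CharsIrr(\Gamma)$ is reduced at $[\rho]$. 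Together with the reductions of the first paragraph, this proves the proposition. I expect the main obstacle to be the middle step: making the principal-bundle picture precise at the scheme level (rather than at the level of the underlying varieties, where it is classical), which is why I would lean on Luna's theorem or on the cited results of Sikora. One can instead bypass it by a direct tangent-space argument, using Proposition~\ref{prop:TC}(b) and $\dim B^1(\Gamma;\Ad\rho)=\dim\Gad$ to match the ``tangent-space defects'' $\dim T_\rho\RsIrr(\Gamma)-\dim T_\rho\RepIrr(\Gamma)$ and $\dim T_{[\rho]}\CharsIrr(\Gamma)-\dim T_{[\rho]}\CharIrr(\Gamma)$, and then arguing, as in Definition~\ref{def:redreg}(a), that such a defect detects non-reducedness; but closing that gap cleanly still wants the local product structure.
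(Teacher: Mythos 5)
The paper does not actually prove this proposition: it is stated as a citation to Corollary~55 of Sikora's paper, and no argument is given. So there is nothing to compare against line by line; I can only assess your argument on its own terms.

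Your main route (make the quotient $q\colon\RsIrr(\Gamma)\to\CharsIrr(\Gamma)$ into a flat $\Gad$-torsor, then push reducedness back and forth through a flat local homomorphism with reduced fibers) is sound, and the two transfer steps are argued correctly: flat with geometrically reduced fibers over a reduced base implies reduced total space, and a flat local homomorphism is faithfully flat hence injective, so nilpotents in the source survive in the target. The one thing you leave open is exactly the crux: that $q$ is flat (indeed smooth) at the \emph{scheme} level, not just on the underlying reduced varieties. Luna's slice theorem as usually stated assumes smoothness of the ambient space and is not immediately available here. The clean way to close the gap is pure GIT with no smoothness hypotheses: on $\RsIrr$ the $\Gad$-action is free with closed orbits, so the GIT quotient is a geometric quotient, and the map $\Gad\times\RsIrr\to\RsIrr\times_{\CharsIrr}\RsIrr$, $(g,x)\mapsto(gx,x)$, is an isomorphism; this makes $q$ an fppf $\Gad$-torsor, and since $\Gad$ is smooth, $q$ is smooth and in particular faithfully flat. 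That is the missing lemma, and it is worth stating explicitly rather than gesturing at Luna.

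Your ``bypass'' via tangent-space defects is on shakier ground than you suggest, and I would not present it as an alternative proof. The implicit principle — that $\dim T_x X>\dim T_x X^{\red}$ exactly when $X$ is non-reduced at $x$ — fails for general schemes: for $X=\Spec\C[x,y]/(x^{2}y^{2})$ at the origin, $X^{\red}=\Spec\C[x,y]/(xy)$ and both Zariski tangent spaces are $2$-dimensional, yet $X$ is non-reduced there (the class of $xy$ is a nonzero nilpotent). The equivalence asserted in Definition~\ref{def:redreg}(a) for $\Rs(\Gamma)$ therefore uses something special about representation schemes, and transplanting it blindly to $\Chars(\Gamma)$ is exactly the step that, as you note, ``wants the local product structure.'' Once you have the torsor structure you don't need the tangent-space route at all, so I would drop it and keep the flat-descent argument as the proof.
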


We refer to Sikora's paper \cite{Sikora} for more details. The results are stated there for good representations into a reductive algebraic group $G$.  (See Section~\ref{sec:otherG} for the definition of good.) In the case $G=\sl$, all irreducible representations are good.

We also have the following fact:
\begin{lemma}
\label{lem:reg}
 An irreducible representation $\rho \in \Rep(\Gamma)$ is regular if and only if the scheme $\Chars(\Gamma)$ is regular at $[\rho]$. 
\end{lemma}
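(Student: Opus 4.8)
The plan is to reduce Lemma~\ref{lem:reg} to Proposition~\ref{prop:TC}(b), exactly as Proposition~\ref{prop:red} follows from the reducedness statement in Proposition~\ref{prop:TC}. Recall that for an irreducible representation $\rho$, regularity means (by Definition~\ref{def:redreg}(b)) that $\dim_{\rho}\Rep(\Gamma) = \dim Z^1(\Gamma;\Ad\rho)$, equivalently that $\Rs(\Gamma)$ is smooth at $\rho$. On the other side, $\Chars(\Gamma)$ is regular at $[\rho]$ iff $\dim_{[\rho]}\Char(\Gamma) = \dim T_{[\rho]}\Chars(\Gamma)$. So the statement to prove is the equivalence of these two equalities.

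The key steps, in order, are as follows. First, I would record the dimension comparison on representation spaces: for irreducible $\rho$ the orbit $\O_\rho$ has dimension $\dim\Gad = \dim B^1(\Gamma;\Ad\rho) = 3$ (the stabilizer is trivial, case (a) of Section~\ref{sec:varieties}), and since the $\Gad$-action on $\RepIrr(\Gamma)$ is free with quotient $\CharIrr(\Gamma)$, one gets $\dim_{\rho}\Rep(\Gamma) = \dim_{[\rho]}\Char(\Gamma) + \dim\Gad$. Second, I would compare Zariski tangent spaces: by Proposition~\ref{prop:TC}(b), $\Phi\colon H^1(\Gamma;\Ad\rho)\to T_{[\rho]}\Chars(\Gamma)$ is an isomorphism, so $\dim T_{[\rho]}\Chars(\Gamma) = \dim H^1(\Gamma;\Ad\rho) = \dim Z^1(\Gamma;\Ad\rho) - \dim B^1(\Gamma;\Ad\rho) = \dim Z^1(\Gamma;\Ad\rho) - \dim\Gad$, using again that $B^1$ has dimension $\dim\Gad$ for irreducible $\rho$. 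Third, I would subtract: the equality $\dim_{[\rho]}\Char(\Gamma) = \dim T_{[\rho]}\Chars(\Gamma)$ holds iff $\dim_{\rho}\Rep(\Gamma) - \dim\Gad = \dim Z^1(\Gamma;\Ad\rho) - \dim\Gad$, i.e. iff $\dim_{\rho}\Rep(\Gamma) = \dim Z^1(\Gamma;\Ad\rho)$, which is precisely regularity of $\rho$. This gives the equivalence.

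A small additional point to address carefully is that ``regular at $[\rho]$'' for $\Chars(\Gamma)$ should be interpreted scheme-theoretically (smoothness of the scheme, i.e. the local dimension of the underlying variety $\Char(\Gamma)$ agrees with the Zariski tangent space dimension of the scheme $\Chars(\Gamma)$), matching Definition~\ref{def:redreg}(b); I would state this explicitly so the dimension bookkeeping is unambiguous. One also needs that $\RepIrr(\Gamma)$ is smooth along $\O_\rho$ precisely when $\rho$ is regular, but that is immediate from the definition together with the fact that regularity is an open condition and is constant along orbits (the chain \eqref{eq:chain} is $\Gad$-equivariant).

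The main obstacle is not any deep input — everything needed is already packaged in Proposition~\ref{prop:TC}(b) and the orbit-dimension computation in Section~\ref{sec:varieties} — but rather the care required to make sure the local dimensions on $\Rep(\Gamma)$ and $\Char(\Gamma)$ really differ exactly by $\dim\Gad$ near an irreducible representation. This uses that $\RepIrr(\Gamma)\to\CharIrr(\Gamma)$ is a (geometric, free) quotient by the $3$-dimensional group $\Gad=\psl$ acting with trivial stabilizers, so that it is a smooth surjection of relative dimension $3$ (at least on the level of reduced varieties, which is what the local-dimension statement concerns); this is where one invokes the stability/poly-stability description recalled before Proposition~\ref{prop:TC} and standard GIT for free actions. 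Once that is in place, the lemma is a one-line consequence of the two isomorphisms $\phi,\Phi$ of Proposition~\ref{prop:TC}(b) together with $\dim B^1(\Gamma;\Ad\rho)=\dim\Gad$.
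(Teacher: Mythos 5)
Your proof is correct, but it takes a genuinely different route from the paper. The paper splits the lemma into two directions: for ``only if'' it simply cites Lemma 2.18 of Lubotzky--Magid, and for ``if'' it invokes that the regular locus of a noetherian scheme is open, shrinks to an irreducible neighborhood $U$ of $[\rho]$ in $\Chars(\Gamma)$, and observes that the corresponding neighborhood of $\rho$ in $\Rs(\Gamma)$ is a $\Gad$-bundle over the smooth $U$, hence smooth. Your argument instead proves both directions at once by a direct dimension count: you compare the local dimension equalities defining regularity on each side, using (i) Proposition~\ref{prop:TC}(b) to identify $T_{[\rho]}\Chars(\Gamma)$ with $H^1(\Gamma;\Ad\rho)$, (ii) $\dim B^1(\Gamma;\Ad\rho)=\dim\Gad=3$ for irreducible $\rho$, and (iii) the local-dimension formula $\dim_{\rho}\Rep(\Gamma)=\dim_{[\rho]}\Char(\Gamma)+\dim\Gad$ coming from the free $\Gad$-torsor $\RepIrr(\Gamma)\to\CharIrr(\Gamma)$. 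Both proofs ultimately hinge on the same geometric input (the torsor structure over the irreducible locus), but yours packages it as a bookkeeping identity so that the two implications come out simultaneously, and it avoids the external citation at the cost of threading the three dimension facts through explicitly; the paper's is somewhat more slick for the ``if'' direction since openness of regularity lets it sidestep the tangent-space computation entirely. Two small points worth making explicit if you wrote this up: the claim $\dim_{\rho}\Rep(\Gamma)=\dim_{[\rho]}\Char(\Gamma)+\dim\Gad$ should be justified by noting that over the stable locus the GIT quotient is a geometric quotient by a free action of a reductive group, hence an \'etale-locally trivial $\Gad$-torsor (so local dimensions add, and this is insensitive to nilpotents since Krull dimension agrees with that of the reduction); and your closing sentence about smoothness of $\RepIrr(\Gamma)$ along the orbit is not actually used by the main dimension count and could be dropped.
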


\begin{proof}
The ``only if'' part is Lemma 2.18 in \cite{LubotzkyMagid}. For the ``if'' part, note that if $\Chars(\Gamma)$ is regular at $[\rho]$, it is regular in a neighborhood $U$ of $[\rho]$. The  neighborhood $U$ may be chosen to consist of irreducibles. We conclude that a neighborhood of $\rho$ in the representation scheme $\Rs(\Gamma)$ is a $\Gad$-bundle over $U$, which is smooth. Hence, $\rho$ is regular; cf. Definition~\ref{def:redreg} (b).
\end{proof}

\begin{remark}
If $\rho$ is regular but reducible, then $\Chars(\Gamma)$ may not be regular at $[\rho]$. See Section~\ref{sec:free} below, namely the case where $\Gamma$ is a free group with at least three generators.
\end{remark}

\subsection{The case of free groups}
\label{sec:free}
We now specialize to the case where $\Gamma =F_k$, the free group on $k$ variables. The representations of free groups into $\sl$ have been studied extensively in the literature; see for example \cite{Horowitz}, \cite{HeusenerPorti}.

We have $\Rep(F_k) \cong G^{k}$, and all representations are regular. When $k=1$, the representations are Abelian, and they can be central, diagonal non-central, or parabolic non-central. For $k \geq 2$, we find representations of all possible types. For example, one obtains a non-Abelian reducible representation of $F_2$ by sending one generator to a non-central diagonal matrix, and the other to a non-central parabolic matrix. This works as well for $F_k$ for $k > 2$ by simply sending all the other generators to $I$.

With regard to the character variety $X(F_k)$:
\begin{itemize}
\item When $k=1$, let $g$ be the generator of $F_1$. We then have $X(F_1) \cong \C$, with the coordinate being the trace $\tau_g$, in the notation \eqref{eq:taug};
\item When $k=2$, let $g$ and $h$ be the generators of $F_k$. We have $X(F_2) \cong \C^3$, with the three coordinates being $x=\tau_g, y=\tau_h$ and $z=\tau_{gh}$. By a result of Fricke \cite{Fricke} and Vogt \cite{Vogt},  the reducible locus $\Char(F_2) \setminus \CharIrr(F_2)$ is the hypersurface given by the equation $x^2 + y^2 + z^2 - xyz=4$. See \cite{GoldmanFV} for an exposition of this; 
\item For $k \geq 3$, the character variety is singular, and its singular locus is exactly the reducible locus, $\Char(F_k) \setminus \CharIrr(F_k)$; see \cite[Section 5.3]{HeusenerPorti}. The fact that all irreducible representations are regular can be seen from Lemma~\ref{lem:reg}. The variety $X(F_k)$ has complex dimension $3k-3$, and its reducible locus has dimension $k$. 
\end{itemize}

For future reference, we note the following facts about the topology of $\CharIrr(F_k)$.
\begin{lemma}
\label{lem:pifree}
For $k \geq 3$, we have $\pi_1(\CharIrr(F_k)) = 1$ and $\pi_2(\CharIrr(F_k))=\Z/2$. Hence, we have $H^1(\CharIrr(F_k); \Z/2) =0$ and  $H^2(\CharIrr(F_k); \Z) =0.$
\end{lemma}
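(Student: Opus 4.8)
The plan is to use the principal bundle
\[ \psl \longrightarrow \RepIrr(F_k) \longrightarrow \CharIrr(F_k) \]
together with its long exact sequence of homotopy groups. The map here is the quotient by the conjugation action of $\Gad=\psl$; this action is free because an irreducible representation has trivial stabilizer (type (a) in Section~\ref{sec:varieties}), and since the irreducible locus consists of stable points, the quotient map is an honest principal $\psl$-bundle. I will use that $\psl$ deformation retracts onto $\pu\cong\sothree\cong\rp^3$, so that $\psl$ is connected with $\pi_1(\psl)=\Z/2$ and $\pi_2(\psl)=0$.

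The first step is to show the total space $\RepIrr(F_k)$ is $2$-connected. We have $\Rep(F_k)\cong G^k=\sl^k$, which deformation retracts onto $\su^k\cong(S^3)^k$ and is therefore $2$-connected. Next I would bound the dimension of the reducible locus $\Sigma:=\Rep(F_k)\setminus\RepIrr(F_k)$. Every reducible representation is conjugate into the Borel subgroup $B$, so $\Sigma$ is the image of the morphism $\psl\times B^k\to\sl^k$, $(g,\vec b)\mapsto g\,\vec b\,g^{-1}$. Since $N_{\psl}(B)=B$, over a representation whose image lies in a unique Borel the fibre of this morphism has complex dimension $\dim_{\C}B=2$, so $\dim_{\C}\Sigma\le 3+2k-2=2k+1$; the strata coming from representations of types (c)--(e) have strictly smaller dimension, so this bound holds on all of $\Sigma$. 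Hence $\Sigma$ has complex codimension $\ge k-1\ge 2$, i.e.\ real codimension $\ge 2(k-1)\ge 4$, in the complex manifold $\Rep(F_k)$. Stratifying the subvariety $\Sigma$ into locally closed smooth submanifolds, each of real codimension at least $4$, and applying a general-position argument shows that $\RepIrr(F_k)\hookrightarrow\Rep(F_k)$ induces isomorphisms on $\pi_i$ for $i\le 2$; hence $\pi_1(\RepIrr(F_k))=\pi_2(\RepIrr(F_k))=0$.

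The second step is a diagram chase in the tail of the homotopy exact sequence,
\[ \pi_2(\psl) \to \pi_2(\RepIrr(F_k)) \to \pi_2(\CharIrr(F_k)) \xrightarrow{\ \partial\ } \pi_1(\psl) \to \pi_1(\RepIrr(F_k)) \to \pi_1(\CharIrr(F_k)) \to \pi_0(\psl). \]
Since $\pi_0(\psl)$ is trivial and $\pi_1(\RepIrr(F_k))=0$, exactness forces $\pi_1(\CharIrr(F_k))=1$; since moreover $\pi_2(\RepIrr(F_k))=0$, the connecting map $\partial$ is injective, and since $\pi_1(\RepIrr(F_k))=0$ it is also surjective, so $\pi_2(\CharIrr(F_k))\cong\pi_1(\psl)=\Z/2$. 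The cohomological assertions are then formal: $\pi_1=1$ gives $H_1(\CharIrr(F_k);\Z)=0$, whence $H^1(\CharIrr(F_k);\Z/2)=0$ by the universal coefficient theorem, and the statement for $H^2$ follows in the same manner once $H_1$ and $H_2\cong\pi_2$ are known via the Hurewicz theorem.

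The main obstacle is the dimension-and-transversality step: one must check carefully that \emph{every} stratum of the reducible locus $\Sigma$ (not just the generic type-(b) stratum) has real codimension at least four, and that removing such a stratified set from a manifold changes neither $\pi_1$ nor $\pi_2$. Everything after that is a routine application of the long exact sequence of a fibration and the Hurewicz/universal-coefficient machinery.
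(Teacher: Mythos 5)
Your proof is correct and follows essentially the same route as the paper's: both establish $2$-connectivity of $\RepIrr(F_k)$ by bounding the complex dimension of the reducible locus by $2k+1$ (so real codimension $\geq 4$ for $k\geq 3$), then run the long exact sequence of the $\psl$-fibration $\RepIrr(F_k)\to\CharIrr(F_k)$ using $\pi_1(\psl)=\Z/2$, $\pi_2(\psl)=0$, and finish with Hurewicz and universal coefficients. Your treatment is somewhat more careful than the paper's in stratifying $\Sigma$ and checking that each stratum has the required codimension, but the underlying ideas and all key steps coincide.
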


\begin{proof}
For $k\geq 3$, consider the reducible locus of the representation variety, $Z = \Rep(F_k) \setminus \RepIrr(F_k)$. Any reducible representation fixes a line in $\C^2$; once we choose the line, we can assume the representation is upper triangular, i.e. takes values in $B \subset G$. Since $B$ has complex dimension $2$, we find that $Z$ has dimension $2k+1$. (The extra degree of freedom comes from choosing the line.) Since $\Rep(F_k) \cong G^k$, we see that $Z$ is of codimension $k-1$, which means real codimension at least $4$. Hence, removing $Z$ from $\Rep(F_k)$ does not change $\pi_1$ and $\pi_2$. From the polar decomposition we see that $G$ is diffeomorphic to $T\su \cong TS^3 \cong S^3 \times \R^3$, which has $\pi_1 = \pi_2 = 1$. We deduce that $\pi_1(\RepIrr(F_k)) =  \pi_2(\RepIrr(F_k))=0$.

We now look at the long exact sequence for the homotopy groups of the fibration $$\Gad \hookrightarrow \RepIrr(F_k) \twoheadrightarrow \CharIrr(F_k).$$
Since $\Gad$ is diffeomorphic to $T\operatorname{SO}(3) \cong \rp^3 \times \R^3$, we obtain that $\pi_1(\CharIrr(F_k)) = 1$ and $\pi_2(\CharIrr(F_k))\cong \pi_1(\rp^3) = \Z/2$.

The computations for cohomology come from the Hurewicz theorem and the universal coefficients theorem.
\end{proof}

\begin{remark}
\label{rem:doi}
When $k = 2$, we can view $\CharIrr(F_2)$ as the complement of the hypersurface $w( x^2w + y^2w + z^2w - xyz-4w^3)=0$ in $\mathbb{CP}^3$. By \cite[Ch.4, Proposition 1.3]{DimcaSing}, we get $H_1(\CharIrr(F_2); \Z) \cong \Z$, so the fundamental group is nontrivial.
\end{remark}

\subsection{Examples for three-manifolds}
\label{sec:ex3}
In this section we will give a few examples of representation and character varieties coming from fundamental groups of three-manifolds $Y$. In general, when $Y$ is a manifold, we will write $\Rep(Y)$ for $\Rep(\pi_1(Y))$, and similarly with $\Rs(Y), \Char(Y)$, etc. 

\begin{remark}
Note that $\pi_1(Y)$ and hence $\Rep(Y), \Rs(Y)$ are defined after choosing a basepoint $z \in Y$. A different choice of basepoint induces a (non-canonical) isomorphism between the respective objects. However, we will drop $z$ from notation for convenience. In the case of character varieties and character schemes, since we divide out by conjugation, the isomorphisms induced by the change of basepoint are actually natural.
\end{remark}

In Examples~\ref{ex:S3}-\ref{ex:T3} below, both the character and representation schemes are reduced, as can be checked using Definition~\ref{def:redreg}(a) and Proposition~\ref{prop:red}. In view of this, we will focus on describing the varieties $R(Y)$ and $X(Y)$.

\begin{example}
\label{ex:S3}
When $Y=S^3$, we have that $\pi_1(Y)$ is trivial, so both $\Rep(Y)$ and $\Char(Y)$ consist of a single point.
\end{example}

\begin{example}
\label{ex:free}
Let $Y$ be the connected sum of $k$ copies of $S^1 \times S^2$. Then $\pi_1(Y)$ is the free group $F_k$ on $k$ generators. The varieties $\Rep(F_k)$ and $\Char(F_k)$ were discussed in Section~\ref{sec:free}.
\end{example}

\begin{example}
\label{ex:Lpq}
Let $Y$ be the lens space $L(p,q)$ with $p > 0$ and $\gcd(p,q) = 1.$ Then $\pi_1(L(p,q)) = \Z/p$. A representation $\rho: \Z/p \to \sl$ is determined by what it does on the generator $[1]\in \Z/p$; up to conjugacy, it must send it to a diagonal matrix of the form $\textit{diag}(u, u^{-1})$, where $u$ is a $p^{\text{th}}$ root of unity. Note that $\textit{diag}(u, u^{-1})$ is conjugate to $\textit{diag}(u^{-1}, u)$. Thus, in terms of the list of representation types in Section~\ref{sec:varieties}:
\begin{itemize}
\item If $p$ is odd, then $R(Y)$ consists of $(p+1)/2$ conjugacy classes of diagonal representations, one being the trivial representation and the rest all non-central.  Thus, $R(Y)$ is the disjoint union of a point and $(p-1)/2$ copies of $TS^2$, and $X(Y)$ consists of $(p+1)/2$ points.
\item If $p$ is even, then $R(Y)$ consists of $(p/2)+1$ conjugacy classes of diagonal representations, two being central and the rest non-central.  Thus, $R(Y)$ is the disjoint union of two points and $(p-2)/2$ copies of $TS^2$, and $X(Y)$ consists of $(p/2)+1$ points.
\end{itemize}
Furthermore, all representations are regular. Indeed, we claim that $H^1(\Z/p; \Ad \rho)=0$ for any such $\rho$. In general, the first cohomology of the cyclic group $\Z/p$ with values in a module $M$ is 
\begin{equation}
\label{eq:h1m}
H^1(\Z/p; M) = \{ m \in M \mid (1+\zeta + \zeta^2 + \dots + \zeta^{p-1})m=0\} / \{(1-\zeta)m \mid m \in M\},
\end{equation}
where $\zeta$ is the action of the generator. In our case, $\zeta$ is conjugation by the matrix $A=\rho([1])$, and $m$ is a traceless $2$-by-$2$ matrix. If $A=\pm I$ then clearly the right hand side of \eqref{eq:h1m} is zero. If $A \sim \textit{diag}(u, u^{-1}) \neq \pm I$, then any element of $\g$ can be written as the commutator $[m, A]$ for some $m \in \g$. Hence, $(1-\zeta)m = [m, A] A^{-1}$ can be any element of $\g$, and we again find that the right hand side of \eqref{eq:h1m} is zero.
\end{example}

\begin{example}
\label{ex:Br3}
Let $Y$ be the Brieskorn sphere
$$\Sigma(p,q,r) = \{(x, y, z) \in \C^3 \mid x^p + y^q + z^r = 0 \} \cap S^5,$$
where $p, q, r > 0$ are pairwise relatively prime integers. The representations of $\pi_1(\Sigma(p,q,r))$ into $\sl$ were studied by Boden and Curtis in \cite[Section 3]{BodenCurtis}. There is the trivial representation and 
$$N = \frac{(p-1)(q-1)(r-1)}{4}$$
irreducible ones. The first cohomology $H^1(\Sigma(p,q,r); \Ad \rho)$ vanishes for all these representations, by \cite[Lemma 2.4]{BodenCurtis}, so they are all regular. 

Therefore, $\Rep(Y)$ consists of one point and $N$ copies of $\psl \cong \rp^3 \times \R^3,$ and $\Char(Y)$ consists of $N+1$ points. 
\end{example}

\begin{example}
\label{ex:Br4}
More generally, let $Y=\Sigma(a_1, \dots, a_n)$ be a Seifert fibered homology sphere, where $a_1, \dots, a_n > 0$ are pairwise relatively prime. We can arrange so that $a_i$ is odd for $i \geq 2$. The representations of $\pi_1(Y)$ into $\sl$ were  studied in \cite[proof of Theorem 2.7]{BodenCurtis}. There is the trivial representation and some irreducibles, which come in families. Precisely, the character variety $\Char(Y) = \mathit{pt} \cup \CharIrr(Y)$ is regular, with $\CharIrr(Y)$ being the disjoint union of components $\M_{\alpha}$, one for each $\alpha=(\alpha_1,\dots, \alpha_n)$, with
$$ \alpha_1 = k_1/2a_1, \ k_1 \in \Z, \ 0 \leq k_1 \leq a_1,$$
$$ \alpha_i = k_i/a_i, \ k_i \in \Z, \ 0 \leq k_i < a_i /2 \ \text{for } i \geq 2.$$
Each $\M_{\alpha}$ can be identified with the moduli space of parabolic Higgs bundles of parabolic degree zero over $\mathbb{CP}^1$ with $n$ marked points $p_1, \dots, p_n$ of weights $a_i, 1-\alpha_i$ at $p_i$. The space $\M_{\alpha}$ is smooth of complex dimension $2m-6$, where
$$ m = m(\alpha) = | \{ \alpha_i \mid \alpha_i \in (0, \tfrac{1}{2}) \} |.$$
(When $m < 3$, we have $\M_{\alpha}=\emptyset.$) Boden and Yokogawa \cite{BodenYokogawa} showed that the spaces $\M_{\alpha}$ are connected and simply connected, and computed their Poincar\'e polynomials (which only depend on $m$). In particular, the Euler characteristic of $\M_{\alpha}$ is $(m-1)(m-2)2^{m-4}$.
\end{example}

\begin{example}
\label{ex:T3}
For an example where the representation variety $\Rep(Y)$ is singular, take the three-torus $T^3$, with $\pi_1(T^3)=\Z^3$. One can check that $\Rep(Y)$ has complex dimension $5$, whereas the Zariski tangent space to $\Rep(Y)$ at the trivial representation is $9$-dimensional: $Z^1(T^3; \g) \cong H^1(T^3; \g) \cong \g^3$.
\end{example}

\begin{example}
An example of a three-manifold $Y$ where the character scheme $\Char(Y)$ is non-reduced, based on  \cite[equation 2.10.4, p.43]{LubotzkyMagid}, was given on p.27 of the version \url{arXiv:1303.2347v2} of \cite{KapovichMillson}. (However, it does not appear in the published version.) The manifold in question is a Seifert fibered space over the orbifold $S^2(3,3,3)$, i.e. over the sphere with three cone points of order $3$.
\end{example}

\begin{remark}
Kapovich and Millson \cite{KapovichMillson} proved universality results for representation schemes and character schemes of three-manifolds, which show that their singularities can be ``arbitrarily complicated''. Specifically, let $Z \subset \C^N$ be an affine scheme over ${\mathbb Q}$, and $x \in Z$ a rational point. Then there exists a  natural number $k$ and a closed (non-orientable) $3$-dimensional manifold $Y$ with a representation $\rho: \pi_1(Y) \to \sl$ so that there are isomorphisms of analytic germs 
$$ (R(Y), \rho) \cong (Z \times \C^{3k+3}, x \times 0)$$
and
$$ (X(Y), [\rho]) \cong (Z \times \C^{3k}, x \times 0).$$
\end{remark}

\subsection{The case of surfaces}
\label{sec:surfaces}
Let $\Sigma$ be a closed oriented surface of genus $g \geq 2$, and let $\Gamma = \pi_1(\Sigma)$. We review a few facts about the character variety of $\Gamma$, following Goldman \cite{Goldman, GoldmanSL2C} and Hitchin \cite{Hitchin}.

A representation $\rho: \pi_1(\Sigma) \to G$ is regular if and only if it is non-Abelian. The character scheme $\Chars(\Sigma)= \Chars(\Gamma)$ is reduced, of complex dimension $6g-6$. Concretely, in terms of the images $A_i, B_i$ of the standard generators of $\pi_1(\Sigma)$, we can write the character variety as
\begin{equation}
\label{eq:CS}
 \Char(\Sigma) =\bigl \{ (A_1, B_1, \dots, A_g, B_g) \in G^{2g}  \mid \prod_{i=1}^g [A_i, B_i]=1\}\sslash \Gad.\end{equation}

The singular locus of $\Char(\Sigma)$ consists exactly of the classes of reducible representations, and is of complex dimension $2g$. The irreducible locus $\CharIrr(\Sigma)$ is a smooth complex manifold; we denote by $J$ its complex structure (coming from the complex structure on $G=\sl$). More interestingly, $\CharIrr(\Sigma)$ admits a natural complex symplectic structure, invariant under the action of the mapping class group. Explicitly, if we identify the tangent space to $\CharIrr(\Sigma)$ at some $[\rho]$ with $H^1(\Sigma; \Ad \rho)$, the complex symplectic form is the pairing
\begin{equation}
\label{eq:omegac}
\omegac: H^1(\Sigma; \Ad \rho) \times H^1(\Sigma; \Ad \rho) \to H^2(\Sigma; \C) \cong \C,
\end{equation}
which combines the cup product with the non-degenerate bilinear form $(x, y) \to \tr(xy)$ on $\g$ (which is $1/4$ of the Killing form). Alternatively, we can identify the points $[\rho]\in\Char(\Sigma)$ with flat $\sl$ connections $A_{\rho}$ on $\Sigma$ up to gauge, and $H^1(\Sigma; \Ad \rho)$ with deRham cohomology with local coefficients, 
$$H^1_{A_\rho} (\Sigma; \g) = \ker(d_{A_{\rho}}: \Omega^1(\Sigma; \g) \to \Omega^2(\Sigma; \g))/\im(d_{A_{\rho}}: \Omega^0(\Sigma; \g) \to \Omega^1(\Sigma; \g)).$$ We then have
$$\omegac(a, b) = \int_\Sigma \tr(a \wedge b),$$
where $a, b \in \Omega^1(\Sigma; \g)$ are $d_{A_\rho}$-closed forms.

Let us now equip $\Sigma$ with a Riemannian metric. Its conformal class determines a complex structure $j$. By the work of Hitchin \cite{Hitchin}, we can identify $\CharIrr(\Sigma)$ with the moduli space of stable Higgs bundles on $(\Sigma, j)$ with trivial determinant, and thus give it the structure of a hyperk\"ahler manifold. In Hitchin's notation, we now have three complex structures $I, J$ and $K=IJ$, where $I$ comes from the moduli space of Higgs bundles, and $J$ is the previous structure on $\CharIrr(\Sigma)$. We also have three symplectic forms $\omega_1$, $\omega_2$ and $\omega_3$ (in Hitchin's notation), where
$$ \omegac = -\omega_1 + i \omega_3.$$

\begin{remark}
\label{rem:exactness}
It is worth noting that $\omega_2$ and $\omega_3$ are exact forms, whereas $\omega_1$ is not; cf. \cite[p.109]{Hitchin} or \cite[Section 4.1]{KapustinWitten}.
\end{remark}

There is also a variant of the character variety that is smooth. Let us choose a basepoint $w \in \Sigma$ and a small disk neighborhood $D$ of $w$, whose boundary $\gamma = \del D$ is a loop around $w$. Then, instead of representations $\rho: \pi_1(\Sigma) \to G$, we can consider {\em twisted representations}, i.e., homomorphisms $\rho: \pi_1(\Sigma \setminus \{w\}) \to G$ with $\rho(\gamma) = -I$. Any such $\rho$ has trivial stabilizer, and is irreducible (it does not preserve any line in $\C^2$). Note also that $\Ad \rho$ is still well-defined as a representation of $\pi_1(\Sigma)$ on $\g$, because conjugation by $-I$ is the identity.

 We denote by $\Reptw(\Sigma)$ the space of twisted representations, and by $\Xtw(\Sigma)$ the {\em twisted character variety}
$$ \Xtw(\Sigma) :=  \Reptw(\Sigma)/\Gad.$$
In terms of the images $A_i, B_i$ of the standard generators of $\pi_1(\Sigma \setminus \{w\})$, we have
\begin{equation}
\label{eq:XtwAB}
 \Xtw(\Sigma) =\bigl \{ (A_1, B_1, \dots, A_g, B_g) \in G^{2g}  \mid \prod_{i=1}^g [A_i, B_i]=-1\}/\Gad.
 \end{equation}

The spaces $\Reptw(\Sigma)$ and $\Xtw(\Sigma)$ are smooth complex manifolds (and the corresponding schemes are reduced). The twisted character variety has complex dimension $6g-6$, and its tangent bundle at some $[\rho]$ is still identified with $H^1(\Sigma; \Ad \rho)$. We can equip $\Xtw$ with a complex symplectic form $\omegac = -\omega_1 + i\omega_3$, as before. In terms of gauge theory, twisted representations correspond to central curvature (i.e., projectively flat) connections in a rank two bundle of odd degree on $\Sigma$. 

After choosing a conformal structure on $\Sigma$, we can identify $\Xtw(\Sigma)$ with a moduli space of Higgs bundles of odd degree and fixed determinant, cf. \cite{Hitchin}. This gives a hyperk\"ahler structure on $\Xtw(\Sigma)$, with complex structures $I, J, K$ and symplectic forms $\omega_1, \omega_2, \omega_3$. They satisfy properties similar to those of the respective objects on $\CharIrr(\Sigma)$.

Let us end with some remarks about the case when the surface $\Sigma$ is of genus $g=1$. Then, all representations $\rho: \pi_1(\Sigma) \to G$ are reducible. The character variety $\Char(\Sigma)$ is the quotient of $\C^* \times \C^*$ by an involution, and $\CharIrr(\Sigma) = \emptyset.$ On the other hand, the twisted character variety $\Xtw(\Sigma)$ is still smooth, consisting of a single point. Indeed,
\begin{equation}
\label{eq:pair}
 A = 
\begin{pmatrix} 
i & 0 \\ 
0 & -i 
\end{pmatrix},  \ \
B = \begin{pmatrix} 
0 & 1 \\ 
-1 & 0 \end{pmatrix} 
\end{equation}
are (up to conjugation) the only pair of anti-commuting matrices in $\sl$.

\section{Lagrangians from Heegaard splittings}
\label{sec:Motivation}
As mentioned in the Introduction, the Atiyah-Floer conjecture \cite{AtiyahFloer} asserts that the  $\su$ instanton homology of a three-manifold can be constructed as Lagrangian Floer homology, for two Lagrangians inside the moduli space of flat $\su$ connections of a Heegaard surface. In this section we pursue a complex version of this construction, with $\su$ replaced by $\sl$.

The Lagrangians constructed below are examples of $(A,B,A)$ branes, in the sense that they are of type A (Lagrangian) for the complex structures $I$ and $K$ (more precisely, for the forms $\omega_1$ and $\omega_3$), and of type $B$ (complex) for the complex structure $K$. These Lagrangians have also appeared in the work of Gukov \cite{GukovSO} and that of Baraglia and Schaposnik; cf. \cite[Section 11]{BaragliaSchaposnikABA} and \cite{BaragliaSchaposnikReal}.

\subsection{Lagrangians in the character variety}
\label{sec:Lags}
Let $Y$ be a closed, connected, oriented three-manifold. Any such manifold admits a Heegaard splitting
$$ Y = U_0 \cup_{\Sigma} U_1,$$
where $\Sigma$ is a closed oriented Heegaard surface, and $U_0, U_1$ are handlebodies. We denote by $g$ the genus of $\Sigma$. 

Given a Heegaard splitting, we consider the irreducible locus of its character variety, $\CharIrr(\Sigma) \subset \Char(\Sigma)$. Note that, when $g=0$ or $1$, the group $\pi_1(\Sigma)$ is Abelian, and hence $\CharIrr(\Sigma)$ is empty. Thus, we will assume $g \geq 2$.

We equip $\CharIrr(\Sigma)$ with the complex structure $J$ and the complex symplectic form $\omegac$, as in Section~\ref{sec:surfaces}. For each handlebody $U_i, i=0,1$, let $\iota_i :\Sigma \to U_i$ be the inclusion, and $(\iota_i)_*$ the induced map on $\pi_1$. We consider the subspace 
$$L_i =\{ [\rho \circ (\iota_i)_*] \mid \rho: \pi_1(U_i) \to G \text{ irreducible} \}  \subset \CharIrr(\Sigma).$$
Equivalently, if we view $\CharIrr(\Sigma)$ as the space of irreducible flat $\sl$ connections on $\Sigma$, then $L_i$ consists of those flat connections that extend to $U_i$.

\begin{lemma}
\label{lem:idab}
$(a)$ The subspaces $L_i \subset \CharIrr(\Sigma)$ can be naturally identified with $\CharIrr(U_i)$, and their intersection $L_0 \cap L_1$ with $\CharIrr(Y)$.

$(b)$ The subspaces $L_i$ are smooth complex Lagrangians of $\CharIrr(\Sigma)$. 
\end{lemma}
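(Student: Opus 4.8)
The plan is to verify the two claims of Lemma~\ref{lem:idab} in turn, with part (a) being essentially a restatement of how the $L_i$ were defined, and part (b) carrying the real content. For part (a), I would argue that a representation $\pi_1(U_i) \to G$ is irreducible if and only if its restriction to $\pi_1(\Sigma)$ is irreducible: since $\iota_i$ is surjective on $\pi_1$ (a standard fact about handlebodies, where $\pi_1(\Sigma) \twoheadrightarrow \pi_1(U_i)$ kills the meridians of a system of compressing disks), the image of $\rho$ and of $\rho\circ(\iota_i)_*$ coincide, so irreducibility is preserved. This surjectivity also shows $\rho \mapsto \rho \circ (\iota_i)_*$ is injective on representations and descends to an injection on character varieties, identifying $L_i$ with $\CharIrr(U_i)$. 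For the intersection, a flat connection on $\Sigma$ extends over both $U_0$ and $U_1$ precisely when it extends over $Y = U_0 \cup_\Sigma U_1$ (van Kampen: $\pi_1(Y)$ is the pushout of $\pi_1(U_0) \leftarrow \pi_1(\Sigma) \to \pi_1(U_1)$), which gives $L_0 \cap L_1 \cong \CharIrr(Y)$; irreducibility is automatically inherited.

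For part (b), I would proceed in two steps: smoothness, then the Lagrangian condition. For smoothness, note $\CharIrr(U_i) = \CharIrr(F_g)$ since $\pi_1(U_i)$ is free of rank $g$, and by Section~\ref{sec:free} (or Lemma~\ref{lem:reg}) every irreducible representation of a free group is regular, so $\CharIrr(F_g)$ is a smooth complex manifold of dimension $3g-3$; the map to $\CharIrr(\Sigma)$ is an embedding by the injectivity established in part (a). The key computation is the tangent-space description: at an irreducible $[\rho]$, Proposition~\ref{prop:TC}(b) identifies $T_{[\rho]}\CharIrr(\Sigma) \cong H^1(\Sigma; \Ad\rho)$ and $T_{[\rho]}L_i \cong H^1(U_i; \Ad\rho)$, and the inclusion of tangent spaces is the map $\iota_i^*: H^1(U_i; \Ad\rho) \to H^1(\Sigma; \Ad\rho)$ induced by $\iota_i:\Sigma \to U_i$. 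One checks $\dim_\C H^1(U_i;\Ad\rho) = 3g-3$ (half of $\dim_\C H^1(\Sigma;\Ad\rho) = 6g-6$), which is the correct half-dimension for a Lagrangian.

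The Lagrangian condition itself is the heart of the matter, and the argument I would use is the classical one: the symplectic form $\omegac$ on $H^1(\Sigma;\Ad\rho)$ is given by cup product followed by the trace pairing on $\g$, landing in $H^2(\Sigma;\C)\cong\C$, and the vanishing of $\omegac$ on $T_{[\rho]}L_i = \iota_i^* H^1(U_i;\Ad\rho)$ follows from functoriality of cup products together with the fact that $\Sigma = \partial U_i$ bounds. Concretely, for $a, b$ in the image of $H^1(U_i;\Ad\rho)$, the class $a \smile b \in H^2(\Sigma;\C)$ is pulled back from $H^2(U_i;\Ad\rho\otimes\Ad\rho)$, and since this factors through $H^2(U_i) \to H^2(\Sigma) = H^2(\partial U_i)$ — which is zero because $H^2(U_i)=0$ for a handlebody, or alternatively because the fundamental class of $\Sigma$ maps to zero in $H_2(U_i)$ — the pairing vanishes. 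So $T_{[\rho]}L_i$ is isotropic, and being of half dimension it is Lagrangian. The main obstacle is getting the tangent-space identification precisely right — in particular verifying that the derivative of the inclusion $L_i \hookrightarrow \CharIrr(\Sigma)$ really is $\iota_i^*$ on the level of group cohomology, which requires unwinding the identifications in Proposition~\ref{prop:TC} and checking naturality; once that is in hand, the isotropy is a formal consequence of $\Sigma$ bounding $U_i$, and the dimension count upgrades isotropic to Lagrangian.
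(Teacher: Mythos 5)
Your part (a) matches the paper's argument exactly: surjectivity of $(\iota_i)_*$ on $\pi_1$, preservation of irreducibility, and van Kampen for the intersection. For part (b) you take a genuinely different route to isotropy. The paper works in the de~Rham/gauge-theoretic model: for $d_A$-closed $\g$-valued $1$-forms $a,b$ on $U_i$, Stokes' theorem gives $\int_\Sigma \tr(a\wedge b) = \int_{U_i} d\,\tr(a\wedge b) = \int_{U_i}\tr(d_Aa\wedge b - a\wedge d_Ab) = 0$. You instead stay in singular cohomology: $\omegac$ is cup product followed by the trace pairing and evaluation on $[\Sigma]$, and vanishing of the restriction to $\iota_i^*H^1(U_i;\Ad\rho)$ follows either from $H^2(U_i;\,\cdot\,)=0$ for a handlebody (so the cup product already dies upstairs) or from $\iota_*[\Sigma]=0$ in $H_2(U_i)$. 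Both are correct incarnations of ``$\Sigma$ bounds $U_i$.'' The trade-off is the one you already flagged: in the de~Rham picture the tangent space of $L_i$ and the inclusion into $H^1_A(\Sigma;\g)$ are visibly just the natural maps, so there is nothing to unwind, whereas in the group-cohomology picture you must verify that the derivative of $L_i\hookrightarrow\CharIrr(\Sigma)$ really is $\iota_i^*$, which is a naturality check on the identifications in Proposition~\ref{prop:TC}. That check is true but is extra work the paper avoids by its choice of model. One small point you could state explicitly (the paper does): the $L_i$ are not just real Lagrangians but complex submanifolds, because $T_{[\rho]}L_i = \iota_i^*H^1(U_i;\Ad\rho)$ is a $\C$-subspace --- the complex structure comes from $\g$, not from $\Sigma$ --- so $L_i$ is a complex Lagrangian, as required.
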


\begin{proof}
$(a)$ Note that $(\iota_i)_*: \pi_1(\Sigma) \to \pi_1(U_i)$ is surjective. Consequently, two representations $\rho_1, \rho_2: \pi_1(U_i) \to G$ are the same if and only if $\rho_1 \circ (\iota_i)_*= \rho_2\circ (\iota_i)_*.$ Furthermore, a representation $\rho: \pi_1(U_i) \to G$ is reducible (fixes a line in $\C^2$) if and only if $\rho \circ i_*$ is. This gives the identifications $L_i \cong \CharIrr(U_i)$. 

The same argument can be used to identify $L_0 \cap L_1$ with $\CharIrr(Y)$. The key observation is that if $\iota: \Sigma \to Y$ denotes the inclusion, then the induced map $\iota_*$ on $\pi_1$ is surjective. This follows from the fact that $\pi_1(\Sigma)$ surjects onto $\pi_1(U_0)$ and $\pi_1(U_1)$, together with the Seifert-van Kampen theorem.

$(b)$ Let us check that $\omegac$ vanishes on $T_{[\rho]}L_i \subset T_{[\rho]}\CharIrr(Y)$. Let $A = A_{\rho}$ be the flat connection associated to $\rho$ on $U_i$. In terms of connections, the inclusion 
$T_{[\rho]}L_i \subset T_{[\rho]}\CharIrr(Y)$ corresponds to
$$ H^1_A(U_i; \g) \subset H^1_A(\Sigma; \g).$$
For $d_A$-closed forms $a, b \in \Omega^1_A(U_i; \g),$ by Stokes' theorem, we have
$$ \omegac(a, b) = \int_\Sigma \tr(a \wedge b) = \int_{U_i} d\tr(a \wedge b) = \int_{U_i} \tr(d_A a \wedge b - a \wedge d_A b) = 0.$$

Moreover, since $\pi_1(U_i)$ is the free group $F_g$ on $g$ generators, the spaces $L_i$ are diffeomorphic to the varieties $\CharIrr(F_g)$ from Section~\ref{sec:free}. These are of complex dimension $3g-3$, which is half the dimension on $\CharIrr(\Sigma)$. We conclude that $L_i$ are Lagrangians. They are also complex submanifolds, since the complex structures come from the complex structure on $\g$.
\end{proof}

Explicitly, we can choose standard generators $a_1, \dots, a_g, b_1, \dots, b_g$ of $\pi_1(\Sigma)$, with $\prod_i [a_i, b_i]=1$, such that $b_1, \dots, b_g$ also generate $\pi_1$ of one of the handlebodies, say $U_0$. If we denote $A_i=\rho(a_i)$, $B_i=\rho(b_i)$, we have the description \eqref{eq:CS} of $\Char(\Sigma)$. In terms of that description, the Lagrangian $L_0$ corresponds to the irreducible representations $\rho$ that satisfy
$$ A_i =1, \ i=1, \dots, g.$$
The second Lagrangian $L_1$ is the image of $L_0$ under an element in the mapping class group of $\Sigma$.

\subsection{Lagrangians in the twisted character variety}
\label{sec:LagTw}
We now present a twisted version of the constructions in Section~\ref{sec:Lags}. This is inspired by the torus-summed Lagrangian Floer homology in the $\su$ case, proposed by Wehrheim and Woodward in \cite[Definition 4.4.1]{WWFloerField}; see also \cite{Horton} for a related construction.

We start with a Heegaard splitting $Y = U_0 \cup_{\Sigma} U_1$ as before. (We allow the case when the Heegaard genus $g$ is $0$ or $1$.) We pick a basepoint $z$ on $\Sigma \subset Y$, and take the connected sum of $Y$ with $T^2 \times [0,1]$, by identifying a ball $B \subset Y$ around $z$ 
with a ball $B'$ in $T^2 \times [1/4,3/4] \subset T^2 \times [0,1]$. We assume that $B$ is split by $\Sigma$ into two solid hemispheres, with common boundary a two-dimensional disk $D \subset \Sigma$. Similarly, $B'$ is split into two solid hemispheres by $T^2 \times \{1/2\}$, and $D$ is identified with the intersection $B' \cap (T^2 \times \{1/2\})$. In this fashion, we obtain a decomposition of $$Y^{\#} := Y \# (T^2 \times [0,1])$$
into two compression bodies\footnote{In three-dimensional topology, a {\em compression body} is either a handlebody or the space obtained from $S \times [0,1]$ by attaching one-handles along $S \times \{1\}$, where $S$ is a closed surface. In our case, $S=T^2$, and we attach $g$ one-handles, where $g$ is the genus of $\Sigma$.} $U_0^{\#}$ and $U_1^{\#}$, each going between $\Sigma^{\#}:=\Sigma \# T^2$ and a copy of $T^2$. We also pick a basepoint $w$ on $T^2 \cong T^2 \times \{1/2\}$ (away from the connected sum region), which becomes a basepoint on $\Sigma^{\#}$. We denote by $\ell_0$ and $\ell_1$ the intervals $\{w\} \times [0,1/2]$ and $\{w\} \times [1/2, 1]$. See Figure~\ref{fig:twisting}.

\begin {figure}
\begin {center}
\input{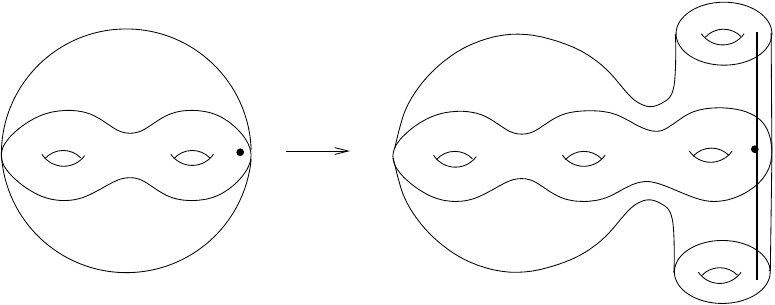_t}
\caption {The connected sum of a Heegaard decomposition with $Y^2 \times [0,1]$.}
\label{fig:twisting}
\end {center}
\end {figure}

As explained in Section~\ref{sec:surfaces}, we can consider the twisted character variety $\Xtw(\Sigma^{\#})$, using representations $\rho: \pi_1(\Sigma \setminus \{w\}) \to G$ that take a loop around $w$ to $-I$. Inside $\Xtw(\Sigma^{\#})$ we take the subspaces $L_i^{\#}, i=0,1$, consisting of classes $[\rho]$ for representations that factor through $\pi_1(U^{\#}_i \setminus \ell_i)$. 
 
\begin{lemma}
$(a)$ The subspaces $L^{\#}_i \subset \Xtw(\Sigma^{\#})$ can be naturally identified with the representation varieties $\Rep(U_i)$, and their intersection $L^{\#}_0 \cap L^{\#}_1$ with $\Rep(Y)$.

$(b)$ The subspaces $L^{\#}_i$ are smooth complex Lagrangians of $\Xtw(\Sigma^{\#})$. 
\end{lemma}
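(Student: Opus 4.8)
The plan is to mimic the proof of Lemma~\ref{lem:idab} almost verbatim, adapting each step to the twisted setting. The one genuinely new input is understanding how adding the $T^2$-handle and passing to twisted representations interacts with the handlebodies $U_i$.

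\textbf{Part (a).} First I would analyze $\pi_1$ of the compression bodies. The compression body $U_i^{\#}$ deformation retracts onto the wedge of $U_i$ with a torus joined along an arc; more precisely, $\pi_1(U_i^{\#} \setminus \ell_i)$ is generated by $\pi_1(U_i)$ together with the two standard loops $x, y$ of $T^2$ and a loop $\gamma$ around $w$, and the only relation is $[x,y] = \gamma$ (the relation $[x,y]=1$ on the honest torus is broken once we delete $\{w\}$). The inclusion $\Sigma^{\#} \setminus \{w\} \hookrightarrow U_i^{\#} \setminus \ell_i$ induces a surjection on $\pi_1$, just as in the untwisted case, since $\pi_1(\Sigma)$ already surjects onto $\pi_1(U_i)$ and the torus generators survive. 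A twisted representation $\rho$ of $\pi_1(\Sigma^{\#} \setminus \{w\})$ sends $\gamma \mapsto -I$, so if it factors through $\pi_1(U_i^{\#} \setminus \ell_i)$ the relation forces $\rho([x,y]) = -I$ — which, as recorded in \eqref{eq:pair}, pins down $\rho(x), \rho(y)$ up to conjugacy to the anti-commuting pair, and in particular imposes no further constraint on the restriction of $\rho$ to $\pi_1(U_i) = F_g$. Hence a $\Gad$-conjugacy class in $L_i^{\#}$ is the same data as an arbitrary homomorphism $\pi_1(U_i) \to G$, i.e. a point of $\Rep(U_i)$ — note the crucial point that \emph{no quotient by conjugation survives}, because the rigid torus piece kills the stabilizer (every twisted representation is irreducible, cf. Section~\ref{sec:surfaces}). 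This gives $L_i^{\#} \cong \Rep(U_i)$. Intersecting, a class lies in $L_0^{\#} \cap L_1^{\#}$ iff $\rho$ factors through both $\pi_1(U_0^{\#}\setminus \ell_0)$ and $\pi_1(U_1^{\#} \setminus \ell_1)$; by Seifert--van Kampen $\pi_1(Y^{\#} \setminus (\ell_0 \cup \ell_1))$ is the pushout, and the same rigidity of the torus piece shows this is the same as a homomorphism $\pi_1(Y) \to G$, giving $L_0^{\#} \cap L_1^{\#} \cong \Rep(Y)$.

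\textbf{Part (b).} For the Lagrangian property I would run the Stokes' theorem argument of Lemma~\ref{lem:idab}(b) unchanged: the tangent space to $\Xtw(\Sigma^{\#})$ at $[\rho]$ is $H^1(\Sigma^{\#}; \Ad\rho)$ with the cup-product symplectic form $\omegac(a,b) = \int_{\Sigma^{\#}} \tr(a \wedge b)$, and $\Ad\rho$ is an honest (untwisted) local system on $\Sigma^{\#}$ since conjugation by $-I$ is trivial, so the handlebody-side classes extend over $U_i^{\#}$ and the integral vanishes by Stokes, exactly as before. Isotropy plus a dimension count gives Lagrangian: $\dim_{\C} \Xtw(\Sigma^{\#}) = 6(g+1) - 6 = 6g$, while $L_i^{\#} \cong \Rep(U_i) = \Rep(F_g) \cong G^g$ has complex dimension $3g$, which is exactly half. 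Smoothness of $L_i^{\#}$ is immediate since $\Rep(F_g) \cong G^g$ is a smooth affine variety, and it is a complex submanifold because all the structures descend from the complex structure on $\g$.

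\textbf{Main obstacle.} The routine parts — Stokes, dimension count, van Kampen surjectivity — are straightforward. The point requiring care is the precise identification of $\pi_1(U_i^{\#}\setminus \ell_i)$ and the verification that restricting a twisted representation to this group imposes \emph{exactly} the data of an unconstrained homomorphism on $\pi_1(U_i)$ with no residual conjugation ambiguity; i.e. that the torus summand contributes a single rigid point rather than a positive-dimensional family or a stabilizer. This hinges on the uniqueness statement \eqref{eq:pair} and on checking that the arc $\ell_i$ is positioned so that $\pi_1(U_i^{\#}\setminus\ell_i)$ really is the free product of $\pi_1(U_i)$ with $\pi_1$ of the punctured torus amalgamated appropriately — a point best settled by drawing the compression body explicitly as in Figure~\ref{fig:twisting}.
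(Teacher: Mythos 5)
Your proposal is correct and follows essentially the same route the paper takes: decompose $\pi_1(U_i^{\#} \setminus \ell_i)$ as the free product $\pi_1(U_i) * \pi_1(T^2 \setminus \{w\})$, use the rigidity of $\Xtw(T^2)$ (a single point with trivial $\Gad$-stabilizer) to conclude that restricting a twisted class to the torus piece eats up exactly the conjugation ambiguity and leaves an unconstrained $\Rep(U_i)$, then Seifert--van Kampen for the intersection, and the Stokes/dimension-count argument for the Lagrangian property. The paper packages the rigidity step slightly differently, as the fiber bundle $\Rep(U_i) \hookrightarrow \Xtw(U_i^{\#}) \twoheadrightarrow \Xtw(T^2) = \mathrm{pt}$, but this is the same observation.

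One step in part (b) that your "exactly as before" elides and does need to be said: unlike the handlebody $U_i$ in Lemma~\ref{lem:idab}(b), the compression body $U_i^{\#}$ has two boundary components, $\Sigma^{\#}$ and $T^2$. Stokes therefore gives
$$0 = \int_{U_i^{\#}} d\,\tr(a\wedge b) = \int_{\Sigma^{\#}} \tr(a\wedge b) \;\pm\; \int_{T^2} \tr(a\wedge b),$$
so you still owe the vanishing of the $T^2$ term. This follows from the rigidity you already invoked: the twisted torus representation $\rho$ has $H^1(T^2; \Ad\rho)=0$ (equivalently, $\Xtw(T^2)$ is a smooth reduced point), so $a|_{T^2}$ and $b|_{T^2}$ are exact and $\int_{T^2} \tr(a\wedge b)=0$. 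The paper itself is terse here (it just says "applies with a slight modification"), but the boundary-term vanishing should be stated.
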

 
 \begin{proof}
 $(a)$ Arguing as in the proof of Lemma~\ref{lem:idab}(a), we can identify $L^{\#}_i$ with a twisted character variety $\Xtw(U^{\#}_i)$, consisting of conjugacy classes of representations $\rho: \pi_1(U^{\#}_i \setminus \ell_i) \to G$ that take the value $-I$ on a loop around $\ell_i$. Since $\pi_1(U^{\#}_i \setminus \ell_i)$ is the free product of $\pi_1(U_i)$ and $\pi_1(T^2 \setminus \{w\})$,  we can write
 $$\Xtw(U^{\#}_i) = (\Rep(U_i) \times \Reptw(T^2))/\Gad.$$
Therefore, we have a fiber bundle
\begin{equation}
\label{eq:RT}
 \Rep(U_i) \hookrightarrow \Xtw(U^{\#}_i) \twoheadrightarrow \Reptw(T^2)/\Gad = \Xtw(T^2),
 \end{equation}
where the projection $\Xtw(U^{\#}_i) \twoheadrightarrow \Xtw(T^2)$ is induced by the inclusion of $T^2 \cong T^2 \times \{i\}$ into $U^{\#}_i$.

As mentioned at the end of Section~\ref{sec:surfaces}, the twisted character variety $  \Xtw(T^2)$ is a single point. Hence, the inclusion $\Rep(U_i)  \hookrightarrow \Xtw(U^{\#}_i)$ is an isomorphism. Explicitly, the inclusion takes $\rho \in \Rep(U_i)$ to the class of representation $\tilde \rho:\pi_1(U^{\#}_i \setminus \ell_i) \to G$ by mapping the generators of $\pi_1$ of the extra torus to the pair of matrices from \eqref{eq:pair}.
 
After identifying each $L^{\#}_i$ with $\Rep(U_i)$, the intersection $L^{\#}_0 \cap L^{\#}_1$ becomes the space of pairs of representations $(\rho_0, \rho_1) \in \Rep(U_1) \times \Rep(U_2)$ that have the same restriction to $\pi_1(\Sigma)$. Using the Seifert-van Kampen theorem as in the proof of Lemma~\ref{lem:idab} (a), we see that this space is  
the same as $\Rep(Y)$.
 
$(b)$ The proof of Lemma~\ref{lem:idab}(b) applies here with a slight modification: instead of flat connections we use projectively flat connections on rank two complex bundles with $c_2 \neq 0$.
 \end{proof}
 
Let us choose standard generators $a_1, \dots, a_g, b_1, \dots, b_g$ of $\pi_1(\Sigma)$, with $\prod_i [a_i, b_i]=1$, such that $b_1, \dots, b_g$ also generate $\pi_1(U_0).$ We add two more generators $a_{g+1}, b_{g+1}$ for the new torus $T^2$, and we obtain a generating set for $\pi_1(\Sigma^{\#})$. If we denote $A_i=\rho(a_i)$, $B_i=\rho(b_i)$, recall from \eqref{eq:XtwAB} that we can write
$$ \Xtw(\Sigma^{\#}) =\bigl \{ (A_1, B_1, \dots, A_g, B_g, A_{g+1}, B_{g+1}) \in G^{2g+2}  \mid \prod_{i=1}^{g+1} [A_i, B_i]=-1\}/\Gad.
 $$
Then, the Lagrangian $L_0$ is given by the equations
$$ A_i =1, \ i=1, \dots, g,$$
and $L_1$ is the image of $L_0$ under a mapping class group element. 

Observe that, since the Lagrangians are identified with $\Rep(U_i)$, they are diffeomorphic to products of $g$ copies of $G \cong S^3 \times \R^3$.

\subsection{Conditions on intersections} Let us recall the definition of clean intersections.

\begin{definition}
\label{def:clean}
Let $M$ be a smooth manifold and $L_0, L_1 \subset M$ smooth submanifolds. We say that $L_0$ and $L_1$ intersect {\em cleanly} at a point $x \in L_0 \cap L_1$ if there is a neighborhood $U$ of $x$ in $M$ such that $L_0 \cap L_1 \cap U$ is a smooth submanifold $Q$, and we have
$$ T_x Q = T_x L_0 \cap T_x L_1 \subset T_x M.$$
Furthermore, we say that $L_0$ and $L_1$ intersect cleanly if they do so at every $x \in L_0 \cap L_1$.
\end{definition}

In particular, transverse intersections are clean.

Let $Y = U_0 \cap_{\Sigma} U_1$ be a Heegaard splitting of a three-manifold. For the Lagrangians constructed in Sections~\ref{sec:Lags} and \ref{sec:LagTw}, we have the following criteria for clean and transverse intersections.

\begin{lemma}
\label{lem:cleanChar}
The Lagrangians $L_0, L_1 \subset \CharIrr(\Sigma)$ intersect cleanly at a point $[\rho] \in \CharIrr(Y)$ if and only if the representation $\rho$ is regular, i.e. $[\rho]$ is a regular point of the character scheme $\CharsIrr(Y)$ (cf. Lemma~\ref{lem:reg}).
\end{lemma}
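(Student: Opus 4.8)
The plan is to translate the clean-intersection condition into a statement about cohomology, and then identify that cohomology with the Zariski tangent data that detects regularity. First I would fix $[\rho] \in \CharIrr(Y) = L_0 \cap L_1$. By Lemma~\ref{lem:idab}, near $[\rho]$ the intersection $L_0 \cap L_1$ is identified (as a topological space) with $\CharIrr(Y)$, and the ambient $\CharIrr(\Sigma)$ has tangent space $H^1(\Sigma; \Ad\rho)$ at $[\rho]$, while $T_{[\rho]}L_i = H^1(U_i; \Ad\rho)$ sits inside it via the restriction map. So the condition $T_{[\rho]}Q = T_{[\rho]}L_0 \cap T_{[\rho]}L_1$ from Definition~\ref{def:clean} unwinds to: $L_0 \cap L_1$ is smooth near $[\rho]$ of dimension exactly $\dim\bigl(H^1(U_0;\Ad\rho) \cap H^1(U_1;\Ad\rho)\bigr)$ inside $H^1(\Sigma;\Ad\rho)$.

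The key computation is to show that $H^1(U_0;\Ad\rho) \cap H^1(U_1;\Ad\rho) \subseteq H^1(\Sigma;\Ad\rho)$ is naturally $H^1(Y;\Ad\rho)$, which by Proposition~\ref{prop:TC}(b) is $T_{[\rho]}\CharsIrr(Y)$. For this I would run the Mayer--Vietoris sequence for the Heegaard decomposition $Y = U_0 \cup_\Sigma U_1$ with coefficients in the local system $\Ad\rho$:
$$ \cdots \to H^0(\Sigma;\Ad\rho) \to H^1(Y;\Ad\rho) \to H^1(U_0;\Ad\rho)\oplus H^1(U_1;\Ad\rho) \xrightarrow{r_0 - r_1} H^1(\Sigma;\Ad\rho) \to \cdots $$
Since $\rho$ is irreducible, $H^0(\Sigma;\Ad\rho) = \g^{\Ad\rho} = 0$, so $H^1(Y;\Ad\rho)$ injects as the kernel of $r_0 - r_1$, i.e. as the fiber product of the two subspaces; and because the further restriction maps $H^1(U_i;\Ad\rho) \hookrightarrow H^1(\Sigma;\Ad\rho)$ are injective (each $U_i$ deformation retracts to a wedge of circles and the restriction is split injective by the Lagrangian property — or one notes $H^0(\Sigma) \to H^0(U_i)$ is an iso so $H^1(U_i) \to H^1(\Sigma)$ is injective from the pair sequence), this fiber product is exactly the set-theoretic intersection $H^1(U_0;\Ad\rho) \cap H^1(U_1;\Ad\rho)$. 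Hence
$$ T_{[\rho]}L_0 \cap T_{[\rho]}L_1 \;\cong\; H^1(Y;\Ad\rho) \;=\; T_{[\rho]}\CharsIrr(Y). $$

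Now the equivalence follows by comparing dimensions. If $\rho$ is regular, then $\CharsIrr(Y)$ is regular at $[\rho]$ (Lemma~\ref{lem:reg}), so $\CharIrr(Y)$ is a smooth manifold near $[\rho]$ of dimension $\dim T_{[\rho]}\CharsIrr(Y) = \dim\bigl(T_{[\rho]}L_0 \cap T_{[\rho]}L_1\bigr)$; since $L_0\cap L_1$ is identified with $\CharIrr(Y)$ near $[\rho]$ and its tangent space is always contained in the intersection of tangent spaces, the dimension match forces equality, giving clean intersection. Conversely, if the intersection is clean at $[\rho]$, then $L_0 \cap L_1 \cong \CharIrr(Y)$ is a smooth manifold near $[\rho]$ of dimension $\dim\bigl(T_{[\rho]}L_0 \cap T_{[\rho]}L_1\bigr) = \dim T_{[\rho]}\CharsIrr(Y)$; this realizes the reduced scheme $\CharIrr(Y)$ as smooth of the full Zariski-tangent dimension at $[\rho]$, which is precisely regularity of $\CharsIrr(Y)$ at $[\rho]$, hence of $\rho$ by Lemma~\ref{lem:reg}. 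The main obstacle I anticipate is the bookkeeping needed to make the Mayer--Vietoris identification \emph{natural and compatible} with the identifications of Lemma~\ref{lem:idab} — in particular checking that $H^1(U_i;\Ad\rho)$ really does map injectively into $H^1(\Sigma;\Ad\rho)$ (which uses irreducibility of $\rho$ restricted to $\pi_1(U_i)$, guaranteed since $[\rho]\in\CharIrr(Y)\subseteq L_i\cong\CharIrr(U_i)$) and that the fiber product of subspaces coincides with their intersection rather than just surjecting onto it.
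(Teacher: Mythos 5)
Your proposal is correct and follows essentially the same route as the paper: Mayer--Vietoris with local coefficients, vanishing of $H^0(\Sigma;\Ad\rho)$ by irreducibility, identification of $H^1(Y;\Ad\rho)$ with $T_{[\rho]}L_0 \cap T_{[\rho]}L_1$, and then Proposition~\ref{prop:TC}(b) together with Lemma~\ref{lem:reg} to finish. The only difference is that you spell out the injectivity of the restriction maps $H^1(U_i;\Ad\rho)\hookrightarrow H^1(\Sigma;\Ad\rho)$ and the identification of the Mayer--Vietoris kernel with a genuine intersection of subspaces, which the paper states without comment; this is a worthwhile check and your justification is sound.
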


\begin{proof}
For the Heegaard splitting $Y = U_0 \cup_{\Sigma} U_1$, the Mayer-Vietoris sequence with local coefficients reads
$$ \dots \to H^0(\Sigma; \Ad\rho) \to H^1(Y; \Ad \rho) \to H^1(U_0; \Ad \rho) \oplus H^1(U_1; \Ad \rho) \to H^1(\Sigma; \Ad \rho) \to \dots$$
Because $\rho$ is irreducible, we have 
$$H^0(\Sigma; \Ad\rho)=\{a \in \g \mid [\rho(x), a]=0,\ \forall x \in \pi_1(\Sigma)\}=0.$$
Thus, we can identify $H^1(Y; \Ad \rho) $ with the intersection
$$ H^1(U_0; \Ad \rho) \cap H^1(U_1; \Ad \rho) \subset H^1(\Sigma; \Ad \rho).$$
Since the character schemes $\CharsIrr(U_0)$, $\CharsIrr(U_1)$ and $\CharsIrr(\Sigma)$ consist of only regular representations, they are smooth (by Lemma~\ref{lem:reg}), and the tangent bundles to the corresponding varieties $L_0 = \CharIrr(U_0)$, $L_1=\CharIrr(U_1)$ and $M=\CharIrr(\Sigma)$ at $[\rho]$ are $H^1(U_0; \Ad \rho)$, $H^1(U_1; \Ad \rho)$, and $H^1(\Sigma; \Ad \rho)$. Moreover, by Proposition~\ref{prop:TC} (b), the tangent space to the scheme $\CharsIrr(Y)$ is $H^1(Y; \Ad \rho)$.
Therefore, we have 
\begin{equation}
\label{eq:tro}
T_{[\rho]} \CharsIrr(Y)= T_{[\rho]}L_0 \cap T_{[\rho]}L_1 \subset T_{[\rho]}M.
\end{equation}

Now, if $L_0$ and $L_1$ intersect cleanly at $[\rho]$ (along a submanifold $Q$, in a neighborhood of $[\rho]$), then \eqref{eq:tro} implies that $T_{[\rho]} \CharsIrr(Y)= T_{[\rho]}Q$, so $[\rho]$ is a regular point of $\CharsIrr(Y)$. Conversely, if $[\rho]$ is a regular point, then once again locally the intersection is a smooth submanifold $Q$, with $T_{[\rho]} \CharsIrr(Y)= T_{[\rho]}Q$. In view of \eqref{eq:tro}, we conclude that the intersection is clean.
\end{proof}

\begin{corollary}
\label{cor:transChar}
The Lagrangians $L_0, L_1 \subset \CharIrr(\Sigma)$ intersect transversely at a point $[\rho] \in \CharIrr(Y)$ if and only if $H^1(Y; \Ad \rho) = 0$.
\end{corollary}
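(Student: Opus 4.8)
The plan is to deduce this directly from Lemma~\ref{lem:cleanChar} and the standard fact that, for two Lagrangians in a symplectic manifold, a clean intersection is transverse precisely when the intersection is zero-dimensional at the relevant point. First I would note that transverse intersection is by definition the condition $T_{[\rho]}L_0 + T_{[\rho]}L_1 = T_{[\rho]}M$, which (by the usual dimension count in a symplectic manifold, using that $L_0$ and $L_1$ are Lagrangian, hence $\dim L_i = \tfrac12 \dim M$) is equivalent to $T_{[\rho]}L_0 \cap T_{[\rho]}L_1 = 0$; indeed $\dim(T_{[\rho]}L_0 + T_{[\rho]}L_1) = \dim L_0 + \dim L_1 - \dim(T_{[\rho]}L_0 \cap T_{[\rho]}L_1) = \dim M - \dim(T_{[\rho]}L_0 \cap T_{[\rho]}L_1)$.

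Next I would combine this with \eqref{eq:tro}, the identification $T_{[\rho]}\CharsIrr(Y) = T_{[\rho]}L_0 \cap T_{[\rho]}L_1$ established inside the proof of Lemma~\ref{lem:cleanChar} via the Mayer--Vietoris sequence (using $H^0(\Sigma;\Ad\rho) = 0$ from irreducibility of $\rho$) together with Proposition~\ref{prop:TC}(b), which identifies this tangent space with $H^1(Y;\Ad\rho)$. Hence transversality at $[\rho]$ holds if and only if $H^1(Y;\Ad\rho) = 0$. This already gives the ``if and only if'' of the corollary without any further work: the identity $T_{[\rho]}L_0 \cap T_{[\rho]}L_1 \cong H^1(Y;\Ad\rho)$ is valid whenever $\rho$ is irreducible — we do not even need to invoke cleanness, since the Mayer--Vietoris computation in Lemma~\ref{lem:cleanChar} only used irreducibility of $\rho$ plus the smoothness of $L_0, L_1, M$ (which holds as recorded there, because $\CharsIrr(U_0), \CharsIrr(U_1), \CharsIrr(\Sigma)$ consist only of regular representations).

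Alternatively, and perhaps more cleanly for the exposition, I would phrase it as: when $H^1(Y;\Ad\rho) = 0$ the representation $\rho$ is automatically regular (as observed in the text right after \eqref{eq:chain}), so Lemma~\ref{lem:cleanChar} applies and the intersection is clean, and then \eqref{eq:tro} forces $T_{[\rho]}L_0 \cap T_{[\rho]}L_1 = 0$, i.e.\ the intersection is transverse; conversely, a transverse intersection is clean with $T_{[\rho]}Q = 0$, so $T_{[\rho]}\CharsIrr(Y) = 0$, i.e.\ $H^1(Y;\Ad\rho) = 0$. I expect there is essentially no obstacle here: the only point requiring a word of care is the Lagrangian dimension count showing ``clean plus zero-dimensional $\Leftrightarrow$ transverse,'' and making sure that the identification of $T_{[\rho]}\CharsIrr(Y)$ with $H^1(Y;\Ad\rho)$ (hence with $T_{[\rho]}L_0 \cap T_{[\rho]}L_1$) from the proof of the preceding lemma is invoked, rather than re-derived.
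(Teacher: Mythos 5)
Your proof is correct and matches the paper's own argument: both combine \eqref{eq:tro} with Proposition~\ref{prop:TC}(b) to identify $T_{[\rho]}L_0 \cap T_{[\rho]}L_1$ with $H^1(Y;\Ad\rho)$, and then use the standard Lagrangian dimension count to equate transversality with vanishing of that intersection. The extra paragraph routing through Lemma~\ref{lem:cleanChar} is a harmless alternative, but as you note yourself it is not needed.
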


\begin{proof}
By part (b) of Proposition~\ref{prop:TC}, since $\rho$ is irreducible, we have $T_{[\rho]} \CharsIrr(Y) \cong H^1(Y; \Ad \rho)$. The conclusion follows from this and the relation \eqref{eq:tro}.
\end{proof}

\begin{lemma}
\label{lem:cleanRep}
The Lagrangians $L^{\#}_0, L^{\#}_1 \subset \Xtw(\Sigma^{\#})$ intersect cleanly at a point $\rho \in \Rep(Y)$ if and only if $\rho$ is regular, i.e., $\rho$ is a regular point of the representation scheme $\Rs(Y)$ (cf. Definition~\ref{def:redreg}).
\end{lemma}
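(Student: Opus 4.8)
The plan is to mimic the proof of Lemma~\ref{lem:cleanChar}, replacing the character-variety computation with the analogous statement for the representation scheme $\Rs(Y)$ and the twisted character variety $\Xtw(\Sigma^\#)$. The key algebraic input is again a Mayer--Vietoris argument, but now one must carry the extra torus factor through the bookkeeping and make sure that the relevant tangent spaces are the full cocycle groups $Z^1$ (which govern the \emph{scheme} $\Rs$) rather than their quotients $H^1$.

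First I would identify the tangent spaces. By the discussion in Section~\ref{sec:LagTw}, after the identification $L^\#_i \cong \Rep(U_i)$, the intersection $L^\#_0 \cap L^\#_1$ is $\Rep(Y)$. Since $\pi_1(U_i)$ is free, $\Rep(U_i) \cong G^g$ is smooth, so $T_\rho L^\#_i = Z^1(\pi_1(U_i); \Ad\rho)$ (here $Z^1 = T_\rho\Rep$ because the scheme is reduced and regular). Similarly $\Xtw(\Sigma^\#)$ is a smooth manifold with $T_{[\rho]}\Xtw(\Sigma^\#) = H^1(\Sigma^\#; \Ad\rho)$; but since $\rho$ restricted to $\Sigma^\#$ is twisted, hence irreducible, we have $H^0(\Sigma^\#; \Ad\rho) = 0$, and one checks that in this situation the natural map from the ``based'' cocycle picture gives $Z^1(\pi_1(U_i); \Ad\rho) \hookrightarrow H^1(\Sigma^\#; \Ad\rho)$ as the tangent inclusion $T_\rho L^\#_i \hookrightarrow T_{[\rho]}\Xtw(\Sigma^\#)$. (The torus generators $a_{g+1}, b_{g+1}$ contribute a fixed contractible piece, so they do not affect the cocycle count.)

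Next I would run the Mayer--Vietoris sequence with local coefficients $\Ad\rho$ for $Y = U_0 \cup_\Sigma U_1$. Using $H^0(\Sigma; \Ad\rho) = 0$ (again because a twisted representation is irreducible; note $\Ad\rho$ makes sense on $\pi_1(\Sigma)$ since $\Ad$ kills $-I$), one obtains
$$
H^1(Y; \Ad\rho) \;\cong\; \ker\bigl(H^1(U_0;\Ad\rho) \oplus H^1(U_1;\Ad\rho) \to H^1(\Sigma;\Ad\rho)\bigr),
$$
and I want the scheme-theoretic refinement: the Zariski tangent space $T_\rho\Rs(Y) = Z^1(\pi_1(Y);\Ad\rho)$ equals $Z^1(\pi_1(U_0);\Ad\rho) \cap Z^1(\pi_1(U_1);\Ad\rho)$ inside $Z^1(\pi_1(\Sigma);\Ad\rho)$. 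This follows from van Kampen: a cocycle on $\pi_1(Y) = \pi_1(U_0) *_{\pi_1(\Sigma)} \pi_1(U_1)$ is exactly a compatible pair of cocycles on the two factors. Translating to the geometric picture, this says precisely
$$
T_\rho\Rs(Y) = T_\rho L^\#_0 \cap T_\rho L^\#_1 \subset T_\rho\Xtw(\Sigma^\#).
$$
Given this equality, the clean-intersection argument closes exactly as in Lemma~\ref{lem:cleanChar}: if $L^\#_0, L^\#_1$ meet cleanly at $\rho$ along a submanifold $Q$, then $T_\rho Q = T_\rho L^\#_0 \cap T_\rho L^\#_1 = T_\rho\Rs(Y)$, and since $Q \subseteq \Rep(Y)$ with $\dim T_\rho Q = \dim T_\rho\Rs(Y)$, the scheme $\Rs(Y)$ is regular at $\rho$; conversely, if $\rho$ is regular then locally $\Rep(Y)$ is a smooth submanifold of the expected dimension, and the displayed tangent-space identity forces the intersection to be clean.

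The main obstacle I anticipate is the bookkeeping that distinguishes this from Lemma~\ref{lem:cleanChar}: one must be careful that all the tangent spaces here are genuinely the cocycle groups $Z^1$ (controlling the possibly non-reduced scheme $\Rs(Y)$) and not the cohomology groups $H^1$, since unlike the irreducible character-variety case there is no $\Gad$-quotient being taken on the intersection. Concretely, this means verifying that the identification $L^\#_i \cong \Rep(U_i)$ is an isomorphism of \emph{schemes} (so that tangent spaces match as $Z^1$'s), that the twisted condition genuinely yields $H^0(\Sigma;\Ad\rho)=0$ so that van Kampen produces the clean intersection of cocycle spaces with no correction term, and that the torus summand $T^2$ contributes only a rigid point (which it does, by the last paragraph of Section~\ref{sec:surfaces}) and hence drops out of the comparison. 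Once these identifications are in place the proof is formally identical to that of Lemma~\ref{lem:cleanChar}.
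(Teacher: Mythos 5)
Your core argument is the same as the paper's: use Seifert--van Kampen to identify $Z^1(\pi_1(Y);\Ad\rho)$ with the intersection $Z^1(\pi_1(U_0);\Ad\rho)\cap Z^1(\pi_1(U_1);\Ad\rho)$ inside $Z^1(\pi_1(\Sigma);\Ad\rho)$, note that these cocycle spaces are the tangent spaces to $L^\#_0$, $L^\#_1$, and $\Rep(\Sigma)$ (since the relevant schemes are smooth), feed the inclusion $\Rep(\Sigma)\hookrightarrow\Xtw(\Sigma^\#)$ through at the tangent-space level, and then close with the same equivalence ``clean $\iff$ regular'' as in Lemma~\ref{lem:cleanChar}. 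You also correctly identify the load-bearing distinction from Lemma~\ref{lem:cleanChar}: here the tangent spaces are cocycle spaces $Z^1$, not cohomologies $H^1$, and the van Kampen amalgam description is precisely what gives the exact intersection of cocycle spaces without any correction term.

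One slip, though not in the part of the argument you actually use: the Mayer--Vietoris paragraph asserts $H^0(\Sigma;\Ad\rho)=0$ on the grounds that ``a twisted representation is irreducible.'' But $\rho$ here is an arbitrary point of $\Rep(Y)$, possibly the trivial representation, and its restriction to $\pi_1(\Sigma)$ need not be irreducible at all -- so $H^0(\Sigma;\Ad\rho)$ can certainly be nonzero (it is all of $\g$ when $\rho$ is trivial). What \emph{is} irreducible is the extension $\tilde\rho$ on $\pi_1(\Sigma^\#\setminus\{w\})$, because of the $-I$ holonomy around $\gamma$; this gives $H^0(\Sigma^\#;\Ad\tilde\rho)=0$, which is the fact you need for the injection $Z^1(\pi_1(\Sigma);\Ad\rho)\hookrightarrow H^1(\pi_1(\Sigma^\#);\Ad\tilde\rho)$. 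Since the $H^1$-level Mayer--Vietoris computation is superseded by (and not used in) your van Kampen argument at the $Z^1$ level, this is a dispensable detour rather than a genuine gap -- but as written it contains a false claim, and the cleaner move is to drop that paragraph entirely and go directly to cocycles, as the paper does.
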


\begin{proof}
The proof is similar to that of Lemma~\ref{lem:cleanChar}, with spaces of 1-cocycles instead of first cohomology groups.

By the Seifert-van Kampen theorem, we have $\pi_1(Y) = \pi_1(U_0) *_{\pi_1(\Sigma)} \pi_1(U_1)$. This time, in view of the definition \eqref{eq:1cocycle} of 1-cocycles, we can directly identify $Z^1(\pi_1(Y); \Ad \rho)$ with the intersection
$$ Z^1(\pi_1(U_0); \Ad \rho) \cap Z^1(\pi_1(U_1); \Ad \rho) \subset Z^1(\pi_1(\Sigma); \Ad \rho).$$
Since the schemes $\Rs(U_0)$, $\Rs(U_1)$ and $\Rs(\Sigma)$ are reduced, the tangent spaces to the corresponding varieties $L_0^{\#} = \Rep(U_0)$, $L_1^{\#}=\Rep(U_1)$ and $\Rep(\Sigma)$ at $\rho$ are the spaces of $1$-cocycles. Further, the tangent space to the scheme $\Rs(Y)$ is $Z^1(\pi_1(Y); \Ad \rho)$. Therefore, we have 
\begin{equation}
\label{eq:Tro0}
T_{\rho}\Rs(Y)= T_{\rho}L^{\#}_0 \cap T_{\rho}L^{\#}_1 \subset T_{\rho}\Rep(\Sigma).
\end{equation}

We also have an inclusion 
$$\Rep(\Sigma) \hookrightarrow M^{\#} =  \Xtw(\Sigma^{\#}), \ \ \rho \mapsto [\tilde \rho],$$
where $\tilde \rho$ acts as $\rho$ on $\pi_1(\Sigma) \subset \pi_1(\Sigma^{\#})$, and takes the generators of the new torus to the anti-commuting matrices from \eqref{eq:pair}. At the level of tangent spaces, we get an inclusion
$$ T_{\rho}\Rep(\Sigma) = Z^1(\pi_1(\Sigma); \Ad \rho) \hookrightarrow T_{\rho} M^{\#} =H^1(\pi_1(\Sigma_1^{\#}); \Ad \tilde\rho),$$
where we identified $\rho$ with its image $[\tilde \rho]$.

Now, instead of \eqref{eq:Tro0}, let us write
\begin{equation}
\label{eq:Tro}
T_{\rho}\Rs(Y)= T_{\rho}L^{\#}_0 \cap T_{\rho}L^{\#}_1 \subset T_{\rho}M^{\#}.
\end{equation}
 
 If $L^{\#}_0$ and $L^{\#}_1$ intersect cleanly at $\rho$, along a submanifold $Q$, then by \eqref{eq:Tro} we have $T_{\rho} \Rs(Y)= T_{\rho}Q$, so $\rho$ is a regular point of $\Rs(Y)$. Conversely, if $\rho$ is regular, then locally the intersection is a smooth submanifold $Q$, with $T_{\rho} \Rs(Y)= T_{\rho}Q$. Using \eqref{eq:tro}, we get that the intersection is clean.
\end{proof}

\begin{corollary}
\label{cor:transRep}
The Lagrangians $L^{\#}_0, L^{\#}_1 \subset \Xtw(\Sigma^{\#})$ intersect transversely at a point $\rho \in \Rep(Y)$ if and only if $Z^1(\pi_1(Y); \Ad \rho) = 0$.
\end{corollary}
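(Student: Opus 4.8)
Corollary~\ref{cor:transRep} should follow from Lemma~\ref{lem:cleanRep} together with the identification of tangent spaces established in its proof. The plan is to argue that a clean intersection along $Q$ is transverse at $\rho$ precisely when $Q$ is zero-dimensional near $\rho$, and then to translate this condition into the vanishing of $Z^1(\pi_1(Y);\Ad\rho)$.

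First I would recall that transverse intersection at $\rho$ means $T_\rho L^{\#}_0 + T_\rho L^{\#}_1 = T_\rho M^{\#}$, equivalently (by a dimension count, since $L^{\#}_0$ and $L^{\#}_1$ are Lagrangians of complementary dimension in the symplectic manifold $M^{\#} = \Xtw(\Sigma^{\#})$) that $T_\rho L^{\#}_0 \cap T_\rho L^{\#}_1 = 0$. Since transverse intersections are clean (cf.\ the remark after Definition~\ref{def:clean}), in either direction of the equivalence we may invoke Lemma~\ref{lem:cleanRep}: if the intersection is transverse at $\rho$ it is clean there, hence $\rho$ is a regular point of $\Rs(Y)$; and conversely, the relation \eqref{eq:Tro} established in that proof, namely $T_{\rho}\Rs(Y)= T_{\rho}L^{\#}_0 \cap T_{\rho}L^{\#}_1 \subset T_{\rho}M^{\#}$, holds whenever $\rho$ is regular.

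Then the argument closes as follows. Combining \eqref{eq:Tro} with the observation above, the Lagrangians intersect transversely at $\rho$ if and only if $\rho$ is regular and $T_\rho \Rs(Y) = T_\rho L^{\#}_0 \cap T_\rho L^{\#}_1 = 0$. Now $T_\rho \Rs(Y) = Z^1(\pi_1(Y);\Ad\rho)$ by Weil's identification of the Zariski tangent space (Section~\ref{sec:varieties}). Moreover, if $Z^1(\pi_1(Y);\Ad\rho) = 0$ then the chain of inequalities \eqref{eq:chain} forces $\dim_\rho \Rep(Y) = 0 = \dim T_\rho \Rs(Y)$, so $\rho$ is automatically regular; hence the regularity hypothesis is subsumed. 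This gives: $L^{\#}_0$ and $L^{\#}_1$ meet transversely at $\rho$ $\iff$ $Z^1(\pi_1(Y);\Ad\rho) = 0$.

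The only mild subtlety is the dimension count identifying $T_\rho L^{\#}_0 + T_\rho L^{\#}_1 = T_\rho M^{\#}$ with $T_\rho L^{\#}_0 \cap T_\rho L^{\#}_1 = 0$: this uses that both Lagrangians are smooth of dimension $\tfrac12 \dim_{\mathbb R} M^{\#}$ at $\rho$, which holds because $\Rep(U_i)$ is smooth (all representations of a free group are regular, Section~\ref{sec:free}) and $\Xtw(\Sigma^{\#})$ is a smooth symplectic manifold (Section~\ref{sec:surfaces}). With this in hand the corollary is immediate, and I expect no genuine obstacle beyond bookkeeping the equivalences.
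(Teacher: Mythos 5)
Your argument is correct and rests on the same two ingredients as the paper's proof: the identity $T_{\rho}\Rs(Y)= T_{\rho}L^{\#}_0 \cap T_{\rho}L^{\#}_1$ from \eqref{eq:Tro}, and Weil's identification $T_{\rho}\Rs(Y) \cong Z^1(\pi_1(Y);\Ad\rho)$. However, you have taken an unnecessary detour: you read \eqref{eq:Tro} as holding only when $\rho$ is regular, and therefore route the argument through Lemma~\ref{lem:cleanRep} in one direction and through \eqref{eq:chain} in the other to secure regularity. In fact, \eqref{eq:Tro} is established unconditionally in the proof of Lemma~\ref{lem:cleanRep} — it follows from Seifert-van Kampen and the fact that the schemes $\Rs(U_0)$, $\Rs(U_1)$, $\Rs(\Sigma)$ are reduced, with no hypothesis on $\rho$; the regularity discussion in that lemma comes only afterwards, when characterizing cleanness. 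So the intended proof is simply: transversality of the two complementary-dimensional Lagrangians means $T_\rho L^{\#}_0 \cap T_\rho L^{\#}_1 = 0$, which by \eqref{eq:Tro} is the same as $T_\rho\Rs(Y)=0$, i.e.\ $Z^1(\pi_1(Y);\Ad\rho)=0$. Your extra bookkeeping is harmless but superfluous.
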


\begin{proof}
Use \eqref{eq:Tro} and the identification of $T_{\rho}\Rs(Y)$ with $Z^1(\pi_1(Y); \Ad \rho)$.
\end{proof}

\subsection{Floer homology for complex Lagrangians}
\label{sec:ComplexLag}
Let us investigate the possibility of defining Lagrangian Floer homology with the spaces constructed in Section~\ref{sec:Lags} and \ref{sec:LagTw}. (Such a construction has been explored in the physics literature, for example in \cite{GukovSO}.) We refer to \cite{FloerLagrangian, FOOO, SeidelBook, OhBook, AurouxSurvey} for references on Lagrangian Floer homology.

With regard to $L_0, L_1 \subset \CharIrr(\Sigma)$, note that both $\CharIrr(\Sigma)$ and the Lagrangians are non-compact, and in fact not even complete as metric spaces (with respect to the hyperk\"ahler metric mentioned in Section~\ref{sec:surfaces}). Thus, holomorphic strips may limit to strips that go through the reducible locus, where the character variety $\Char(\Sigma)$ is singular. Defining Floer homology in such a situation is problematic.

The situation is more hopeful for the Lagrangians $L^{\#}_0, L^{\#}_1 \subset \Xtw(\Sigma^{\#})$. They are still non-compact, but are complete with respect to the hyperk\"ahler metric, and we can try to understand their behavior at infinity.

We should also decide what symplectic form to use on the manifold $M=\Xtw(\Sigma^{\#})$. Recall from Section~\ref{sec:surfaces} that $\omegac = -\omega_1 + i \omega_3$. We can try  $\omega_1$, $\omega_3$, or a combination of these. 

\begin{remark}
The intuition behind the Atiyah-Floer conjecture is that, as we stretch the three-manifold $Y$ along a Heegaard surface $\Sigma$, the ASD Yang-Mills equations on $\R \times Y$ become the Cauchy-Riemann equations for strips in the moduli space of flat $\su$ connections on $\Sigma$. In the $\sl$ case, on $\R \times Y$ we can consider the Kapustin-Witten equations \cite{KapustinWitten} for various parameters $t \in \R$. In particular, at $t=0$ we find the $\sl$ ASD equations, and at $t=1$ we see the equations considered in Witten's work on Khovanov homology \cite{FiveBranes}. When we stretch $Y$ along $\Sigma$, we get the Cauchy-Riemann equations in $\Char(\Sigma)$, with respect to the complex structure $I$ for $t=0$, and with respect to $K$ for $t=1$; see \cite[Section 4]{KapustinWitten}. The same goes for $\Xtw(\Sigma)$ if we do a twisted version. Note that, under the hyperk\"ahler metric, the complex structure $I$ corresponds to $\omega_1$, and $K$ to $\omega_3$.

Another option is to consider the Vafa-Witten equations on $\R \times Y$ \cite{VafaWitten}. When we stretch along $\Sigma$, we obtain once again the Cauchy-Riemann equations for the complex structure $I$. See \cite[Section 4.2]{Haydys} or \cite[Section 8]{Mares}.
\end{remark}

Observe that since $M$ is hyperk\"ahler, it is Calabi-Yau ($c_1=0$). If we work with $\omega_1$ (which is not an exact form; see Remark~\ref{rem:exactness}), we expect sphere bubbles to appear, and they would not be controlled by their first Chern class. This makes constructing Lagrangian Floer homology more difficult. 

It seems better to use $\omega_3$, which is exact. Since the Lagrangians $L_i^{\#}$ are diffeomorphic to products of $G$, they satisfy $H^1(L_i^{\#}; \Z)=0$, so are automatically exact. This precludes the existence of disks and sphere bubbles. Further, since $H^1(L_i^{\#}; \Z/2)=H^2(L_i^{\#}; \Z/2)=0$, the Lagrangians have unique spin structures, and these can be used to orient the moduli space of holomorphic disks. Also, the fact that $c_1(M)=0$ implies that $M$ admits a complex volume form; a choice of a homotopy class of such volume forms induces a $\Z$-grading on the Floer homology. In fact, the  hyperk\"ahler structure determines a canonical holomorphic volume form; hence, if the Floer homology is well-defined, it admits a canonical $\Z$-grading. 

It still remains to deal with the non-compactness. To ensure that at least the intersection $L_0^{\#} \cap L_1^{\#}$ is compact, one needs to perturb one Lagrangian near infinity, by a suitable Hamiltonian isotopy. This should lead to an ``infinitesimally wrapped'' Lagrangian Floer homology, provided that (after perturbation) holomorphic strips do not escape to infinity. In particular, we need the following tameness condition on Lagrangians introduced by Sikorav.

\begin{conjecture}
\label{conj:tame}
The Lagrangians $L^{\#}_0, L^{\#}_1 \subset \Xtw(\Sigma^{\#})$ are tame, in the sense of \cite[Definition 4.7.1]{Sikorav}.
\end{conjecture}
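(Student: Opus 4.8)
The plan is to establish tameness by understanding the geometry of $L_0^\#, L_1^\#$ near infinity inside $\Xtw(\Sigma^\#)$, viewed through the Hitchin fibration. Recall that Sikorav's tameness condition requires, roughly, that outside a compact set the Lagrangians have bounded second fundamental form and ``bounded geometry'' with respect to some complete metric, so that holomorphic curves with boundary on them obey a monotonicity-type estimate and cannot escape to infinity in bounded symplectic area. So the first step is to identify $\Xtw(\Sigma^\#)$ with the moduli space of stable Higgs bundles of odd degree and fixed determinant on $(\Sigma^\#, j)$, endowed with the complete hyperk\"ahler metric, and to use the Hitchin map $h: \Xtw(\Sigma^\#) \to \bigoplus_k H^0(\Sigma^\#, K^{\otimes k})$, which is a proper map and realizes the space at infinity as a torus fibration degenerating over the discriminant. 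The second step is to describe $L_i^\#$ in these terms: since $L_i^\#$ is identified with $\Rep(U_i) \cong G^g$ (a copy of the representation variety of a handlebody, crossed with the single point $\Xtw(T^2)$), it is a complex Lagrangian that should be realized, after choosing $j$ compatibly with the handlebody structure, as (a component of) the locus of Higgs bundles arising from the ``Lagrangian of opers''-type construction or, more concretely, via the non-abelian Hodge correspondence applied to the $G$-local systems extending over $U_i$. The key point is that $L_i^\#$ is a closed complex-analytic Lagrangian submanifold of a smooth affine variety, hence is itself a closed subvariety, and closed subvarieties of affine varieties carrying complete bounded-geometry metrics are automatically tame.

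Concretely, I would carry out the following steps in order. (1) Fix a complex structure $j$ on $\Sigma^\#$ and invoke Hitchin's theorem to get the hyperk\"ahler metric and the proper Hitchin map $h$; record that $\Xtw(\Sigma^\#)$ is a smooth affine variety and the metric is complete with bounded geometry in the sense needed for Sikorav's framework (this is standard for the Hitchin moduli space — curvature decay along the fibration at infinity). (2) Show $L_i^\#$ is a closed complex-analytic subset: it is the image of the proper algebraic map $\Rep(U_i) \to \Xtw(\Sigma^\#)$ induced by restriction along $\pi_1(\Sigma^\# \setminus \{w\}) \to \pi_1(U_i^\# \setminus \ell_i)$ (using that the twisted condition and the chosen matrices \eqref{eq:pair} make everything algebraic), and since $\Rep(U_i) \cong G^g$ is itself affine, the image is a closed subvariety by properness of the GIT quotient map restricted to the stable/twisted locus. (3) Conclude from a general principle — a closed submanifold of a complete Riemannian manifold with bounded geometry, which is moreover a subvariety (hence has bounded second fundamental form at infinity by Lojasiewicz/curvature estimates for subvarieties, or by explicit analysis in Hitchin coordinates near the discriminant) — is tame. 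Here one can either cite an existing tameness result for Hitchin-type moduli and their Lagrangian subvarieties, or give the estimate by hand using the asymptotic model of the Hitchin fibration (semiflat metric on the torus fibers) and noting that $L_i^\#$ meets the generic fiber in a subtorus or a translate thereof, on which the second fundamental form is controlled.

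The main obstacle I expect is step (3): verifying the quantitative bound on the second fundamental form (and the ``area vs. diameter'' bound) of $L_i^\#$ near infinity. While $L_i^\#$ is a subvariety and hence morally has controlled geometry, the hyperk\"ahler metric on $\Xtw(\Sigma^\#)$ is only understood asymptotically (the Gross--Wilson / semiflat approximation away from the discriminant, with corrections near it), so making the estimate rigorous requires either (a) importing a precise asymptotic description of the metric — which is available in the literature for rank-two fixed-determinant Higgs bundles — and checking that $L_i^\#$ sits nicely with respect to the fibration (e.g., that $h$ restricted to $L_i^\#$ is again proper and that $L_i^\#$ is a ``multisection-like'' Lagrangian), or (b) finding a more robust argument that bypasses the metric: for instance, showing $L_i^\#$ is the fixed locus of an anti-holomorphic or a finite-order isometric involution (a ``brane of type $(A,B,A)$'' coming from the real structure on $G$ or from the handlebody involution), since fixed-point sets of isometries are automatically totally geodesic and hence trivially tame. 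Route (b), if it can be arranged, would give the cleanest proof; otherwise one falls back on the asymptotic analysis of the Hitchin metric, which is the technically heavy part. (Since this is stated only as a conjecture in the paper, a complete proof is presumably left open, and the above is the natural line of attack.)
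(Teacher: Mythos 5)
The statement you are addressing is labelled as a \emph{conjecture} in the paper, and the authors give no proof of it; there is nothing in the paper to compare your argument against. What you have written is a plan of attack rather than a proof, and you say so yourself at the end, so the real question is whether the plan is sound. Parts of it are reasonable, but two of the claims you lean on most heavily are not correct as stated.

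First, the ``general principle'' you invoke in step (3) --- that a closed subvariety of a smooth affine variety carrying a complete bounded-geometry metric is automatically tame --- is false. Sikorav's tameness for a Lagrangian $L$ (Definition 4.7.1 of \cite{Sikorav}) is a uniform quantitative condition: roughly, there must be constants $\rho_0, C>0$ so that for every $x \in L$ the piece $L \cap B(x,\rho_0)$ is contractible inside $L\cap B(x, C\rho_0)$, together with a two-sided bound on the intrinsic vs.\ extrinsic distance. None of this follows formally from $L$ being closed and complex-analytic, even with bounded ambient geometry; being a subvariety gives local finiteness and local contractibility but no uniform constants at infinity. \L ojasiewicz-type estimates are local in nature and do not upgrade to the uniform-at-infinity bounds tameness requires. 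So step (3) cannot be cited as a known result; it would itself be the theorem to prove, and it is exactly the hard part.

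Second, route (b) does not apply to these Lagrangians. The submanifolds $L^\#_i$ are \emph{complex} Lagrangians for $J$ (holomorphic for $J$, Lagrangian for $\omega_1$ and $\omega_3$), i.e.\ branes of type $(B,A,A)$ in Hitchin's hyperk\"ahler structure, whereas fixed-point loci of anti-holomorphic isometric involutions are of type $(A,B,A)$ (these are the real character varieties such as $\slr$ or $\su$). There is no natural involution of $\Xtw(\Sigma^\#)$ whose fixed locus is the handlebody Lagrangian; doubling $U_0$ induces the identity on $\Sigma$, hence no nontrivial involution of $\Xtw(\Sigma^\#)$, and the ``real structure on $G$'' fixes the wrong locus. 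So a totally-geodesic shortcut is not available. Moreover, even in route (a), properness of $h|_{L^\#_i}$ is itself nontrivial: in the Dolbeault picture $L^\#_i$ is the image of a transcendental (non-algebraic) map via the non-abelian Hodge correspondence, and its interaction with the Hitchin fibration is not understood in the literature; this is precisely why the authors leave the statement as a conjecture. Your identification of the asymptotic-metric analysis as the core difficulty is correct, but the two shortcuts you propose do not close the gap.
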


Conjecture~\ref{conj:tame} would imply compactness for the moduli spaces of holomorphic disks with boundary on either Lagrangian. We expect that a similar tameness condition can be formulated for the pair $(L_0, L_1)$, to ensure compactness for the moduli spaces of strips.  If these conditions are all satisfied, then the Lagrangian Floer homology $\HF_*(L^{\#}_0, L^{\#}_1)$ would be well-defined. We also expect $\HF_*(L^{\#}_0, L^{\#}_1)$ to be an invariant of $Y$. A potential strategy for proving invariance would be to use the theory of Lagrangian correspondences and pseudo-holomorphic quilts developed by Wehrheim and Woodward; see \cite{WWFunctoriality}, \cite{WWFloerField}. 
 
Moreover, since $L^{\#}_0, L^{\#}_1$ are complex Lagrangians, there should be no non-trivial pseudo-holomorphic strips between then. Indeed:
\begin{itemize}
\item If two $J$-complex Lagrangians in a hyperk\"ahler manifold $(M, I, J, K, g)$ intersect transversely, then the relative Maslov grading between any two intersection points is always zero. Indeed, the relative grading is the index of an operator $L$, the linearization of the Cauchy-Riemann operator (defined from the complex structure $I$ or $K$). One can check that the operators $J^{-1}LJ$ and $-L^*$ differ by a compact operator, which implies that $\ind(L) = \ind(L^*) = 0.$ This is an analogue of the fact that, in finite dimensions, the Morse index of the real part of a holomorphic function is zero (because the signature of a complex symmetric bilinear form is zero). Since the relative grading is zero, for generic almost complex structures, the moduli space of pseudo-holomorphic curves is empty;

\item Even if the two $J$-complex Lagrangians do not intersect transversely, for a generic value of $\theta \in \R$, if we consider the complex structure $K(\theta) =\cos(\theta) K +  \sin(\theta) I$, then there are no $K(\theta)$-holomorphic strips with boundary on the Lagrangians; cf. \cite{SolomonVerbitsky}. Note that $K(\theta)$ is $\omega_3$-tame for $\theta$ close to $0$.
\end{itemize}

The above results suggest that the Lagrangian Floer homology of complex Lagrangians may have a simpler algebraic interpretation. Indeed, in \cite[Remark 6.15]{BBDJS}, the authors describe an analogy between Lagrangian Floer homology and a sheaf-theoretic construction. In the following sections we will follow their suggestion and construct three-manifold invariants using sheaf theory instead of symplectic geometry.

\section{Sheaves of vanishing cycles and complex Lagrangians}
\label{sec:sheaves}
In this section we review some facts about complex symplectic manifolds, perverse sheaves, vanishing cycles, and then present Bussi's construction from \cite{Bussi}.

\subsection{Complex symplectic geometry}
We start with a few basic definitions and results; some of them also appear in \cite[Section 1.3]{Bussi}.

\begin{definition}
A {\em complex symplectic manifold} $(M, \omega)$ is a complex manifold equipped with a closed non-degenerate holomorphic two-form $\omega$. If $M$ has complex dimension $2n$, an $n$-dimensional complex submanifold $L \subset M$ is called {\em complex Lagrangian} if $\omega|_L=0$.
\end{definition}

The standard example of a complex symplectic manifold is $T^*\C^n$, with the canonical symplectic form $\omega_{\can}$. We have a complex Darboux theorem, whose proof is the same as in the real case.
\begin{theorem}
\label{thm:Darboux}
Let $(M, \omega)$ be a complex symplectic manifold, and pick $p \in M$. Then, there exist a neighborhood $S$ of $p$ and an isomorphism (i.e. biholomorphic symplectomorphism) $h: (S, \omega) \to (T^*N, \omega_{\can})$, for an open set $N \subseteq \C^n$.
\end{theorem}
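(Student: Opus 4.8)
The plan is to imitate the standard Moser-type / coordinate-straightening proof of the real Darboux theorem, checking at each step that the constructions can be carried out holomorphically. First I would reduce to a local model: choosing holomorphic coordinates centered at $p$, we may assume $M$ is an open neighborhood of $0$ in $\C^{2n}$ and $\omega$ is a closed non-degenerate holomorphic $2$-form with constant part $\omega_0 := \omega(0)$. Since $\omega_0$ is a non-degenerate alternating bilinear form on $\C^{2n}$, a purely linear-algebra normalization (the complex analogue of the fact that any symplectic form on a vector space is equivalent to the standard one) lets us apply a $\C$-linear change of coordinates so that $\omega_0 = \sum_{i=1}^n dq_i \wedge dp_i$, the constant form underlying $\omega_{\can}$ on $T^*\C^n$. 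After this we have two holomorphic symplectic forms on a neighborhood of $0$, namely $\omega$ and $\omega_1 := \omega_{\can}$, agreeing at the origin.

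The core is a holomorphic Moser argument. Set $\omega_t := \omega_1 + t(\omega - \omega_1)$ for $t \in [0,1]$; each $\omega_t$ is closed, and since $\omega_t(0) = \omega_0$ is non-degenerate, $\omega_t$ is non-degenerate on a (possibly smaller) common neighborhood of $0$ for all $t$. The difference $\omega - \omega_1$ is closed and vanishes at $0$; by a holomorphic Poincaré lemma on a polydisk we may write $\omega - \omega_1 = d\sigma$ for a holomorphic $1$-form $\sigma$ with $\sigma(0) = 0$ (the standard homotopy-operator formula produces a holomorphic primitive from a holomorphic closed form). Non-degeneracy of $\omega_t$ lets us define, for each $t$, a holomorphic vector field $X_t$ by $\iota_{X_t}\omega_t = -\sigma$; since $\sigma(0)=0$ we have $X_t(0) = 0$. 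Integrating the (time-dependent, holomorphic) vector field $X_t$ gives a holomorphic flow $\psi_t$ defined on a neighborhood of $0$ for $t \in [0,1]$, with $\psi_0 = \id$ and $\psi_t(0) = 0$. The usual Cartan-calculus computation $\tfrac{d}{dt}\psi_t^*\omega_t = \psi_t^*(\mathcal{L}_{X_t}\omega_t + \tfrac{d}{dt}\omega_t) = \psi_t^*(d\iota_{X_t}\omega_t + (\omega-\omega_1)) = \psi_t^*(-d\sigma + d\sigma) = 0$ shows $\psi_1^*\omega = \psi_1^*\omega_1 = \omega_{\can}$ near $0$. Composing $\psi_1$ with the earlier linear change of coordinates produces the desired biholomorphic symplectomorphism $h$ from a neighborhood $S$ of $p$ onto an open set $N \subseteq \C^n$ viewed inside $T^*N \subset T^*\C^n = \C^{2n}$ (shrinking $S$ so that its image is of product type $T^*N$).

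The step I expect to require the most care is the Moser argument in the holomorphic category: one must make sure the homotopy operator in the Poincaré lemma returns a \emph{holomorphic} primitive (straightforward on a polydisk, since the operator is defined by an integral along rays that preserves holomorphicity), and that the flow of the holomorphic time-dependent vector field $X_t$ exists for all $t \in [0,1]$ on a fixed neighborhood of $0$ — this is where one shrinks the domain, using $X_t(0) = 0$ and continuous (indeed holomorphic) dependence on parameters to get a uniform existence time via the standard ODE estimates, which go through verbatim over $\C$ since holomorphic ODEs are a special case of real-analytic ones. None of this is conceptually new — as the statement notes, ``the proof is the same as in the real case'' — so I would present it briskly, emphasizing only the points where holomorphy must be invoked.
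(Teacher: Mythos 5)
Your proof is correct and follows exactly the route the paper has in mind: the paper itself does not spell out an argument but simply states that ``the proof is the same as in the real case,'' and the Moser trick you carry out is the standard real proof. You correctly identify the only two points where holomorphy must actually be invoked---that the radial homotopy operator returns a holomorphic primitive on a polydisk, and that the time-$t$ flow of a holomorphic time-dependent vector field with a zero at the origin is defined and biholomorphic on a fixed shrunken neighborhood for all $t\in[0,1]$---and both are handled correctly. (The final sentence, about arranging the image to be literally of the form $T^*N = N\times\C^n$ rather than an open subset of it, glosses over the same point that the paper's statement does; this is a standard informality and not a gap specific to your write-up.)
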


There is also a complex Lagrangian neighborhood theorem:
\begin{theorem}
\label{thm:CxLagNbhd}
Let $(M, \omega)$ be a complex symplectic manifold, and $Q \subset M$ a complex Lagrangian. For any $p \in Q$, there exist a neighborhood $S$ of $p$ in $M$ and an isomorphism $h: (S, \omega) \to (T^*N, \omega_{\can})$, for an open set $N \subseteq \C^n$, such that $h(Q \cap S) = N$, the zero section in $T^*N$.
\end{theorem}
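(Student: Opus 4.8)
The plan is to run the classical argument behind the Weinstein Lagrangian neighborhood theorem inside the holomorphic category; as with the complex Darboux theorem (Theorem~\ref{thm:Darboux}), each step of the real proof transplants essentially verbatim, so no fundamentally new input is required.

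First I would apply Theorem~\ref{thm:Darboux} at $p$ to obtain a biholomorphic symplectomorphism from a neighborhood of $p$ onto $(T^*N_0, \omega_{\can})$ for some open $N_0 \subseteq \C^n$; composing with a translation of $T^*\C^n$ we may assume that $p$ is sent to the origin $0$. The Lagrangian $Q$ is carried to a complex Lagrangian germ $Q_0 \subset T^*\C^n$ through $0$, and it now suffices to straighten $Q_0$ onto the zero section by a holomorphic symplectomorphism fixing $0$. Since $\operatorname{Sp}(2n, \C)$ acts transitively on complex Lagrangian subspaces of $(\C^{2n}, \omega_{\can})$, I would precompose with a linear symplectomorphism so that $T_0 Q_0$ is transverse to the cotangent fiber $\{0\} \times (\C^n)^*$; after shrinking, $Q_0$ is then the graph $\{(q, \sigma(q)) : q \in N\}$ of a holomorphic $1$-form $\sigma$ on a polydisc $N \ni 0$, with $\sigma(0) = 0$ because $0 \in Q_0$.

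Next, a direct computation of the pullback of $\omega_{\can}$ to this graph shows that the Lagrangian condition $\omega_{\can}|_{Q_0} = 0$ is equivalent to $d\sigma = 0$. The holomorphic Poincar\'e lemma on the polydisc $N$ (e.g.\ via the radial homotopy formula) then produces a holomorphic function $F$ on $N$ with $dF = \sigma$, $F(0) = 0$, and hence $dF(0) = \sigma(0) = 0$. Finally, the fiberwise translation $\tau \colon T^*N \to T^*N$, $\tau(q, \xi) = (q, \xi - dF(q))$, is a biholomorphism fixing $0$ (because $dF(0) = 0$), carries $Q_0$ onto the zero section, and preserves $\omega_{\can}$: writing $\omega_{\can} = \sum_i dq_i \wedge d\xi_i$ one computes $\tau^*\omega_{\can} = \omega_{\can} - \sum_i dq_i \wedge d(\partial_{q_i} F)$, and the last sum vanishes by symmetry of the second partial derivatives of $F$. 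Composing $\tau$ with the linear symplectomorphism and the Darboux chart, and restricting to a sufficiently small neighborhood $S$ of $p$, yields the required $h$, with $N$ taken to be the (shrunk) image $h(Q \cap S)$ regarded as an open subset of $\C^n$.

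There is no substantive obstacle here; the only points meriting attention are the holomorphic Poincar\'e lemma on a polydisc and the routine bookkeeping of the successive shrinkings of the domain. As an alternative one could argue by a holomorphic Moser deformation: interpolate linearly between $\omega$ and the pullback of $\omega_{\can}$ along $Q$, observe that the difference is exact with a primitive vanishing along $Q$, and integrate for unit time (after shrinking) the resulting time-dependent holomorphic vector field, which vanishes along $Q$; this gives the same conclusion with the same content.
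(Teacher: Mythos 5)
Your proof is correct and supplies exactly the standard holomorphic transplant of the Weinstein Lagrangian neighborhood argument that the paper has in mind: the authors state Theorem~\ref{thm:CxLagNbhd} without proof, having already remarked (before Theorem~\ref{thm:Darboux}) that such results carry over from the real case verbatim. One small simplification you could make: once $\sigma$ is known to be closed, the fiberwise translation $\tau(q,\xi) = (q,\xi - \sigma(q))$ already satisfies $\tau^*\omega_{\can} = \omega_{\can} + d\sigma = \omega_{\can}$ directly, so the holomorphic Poincar\'e lemma and the primitive $F$ are not strictly needed (though using them is harmless and, as you note, makes the symplectomorphism property visibly a consequence of the symmetry of the Hessian).
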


Note that, unlike in the real case, Theorem~\ref{thm:CxLagNbhd} does not describe a neighborhood of the whole Lagrangian $Q$. In the complex setting, a neighborhood of $Q$ may not be isomorphic to $T^*Q$. This is related to the fact that complex manifolds may have nontrivial moduli.

We now discuss polarizations, starting with the linear case.
\begin{definition}
A {\em polarization} of a complex symplectic vector space $(V, \omega)$ is a linear projection $\pi: V \to V/L$, determined by the choice of a complex Lagrangian subspace $L \subset V$. 
\end{definition}

Given a polarization, we can choose another Lagrangian subspace $Q \subset V$, transverse to $L$, identify $V/L$ with $Q$ and get a decomposition $V=Q \oplus L$, as well as an isomorphism $L \cong Q^*$ induced by the symplectic form. Overall, we get a decomposition 
\begin{equation}
\label{eq:polar}
V = Q \oplus Q^*.
\end{equation}  Observe that, given $L$ and $Q$, any other $Q'$  transverse to $L$ is described as the graph of a linear function $f: Q \to L$, which is symmetric iff $Q'$ is Lagrangian. Therefore, given the polarization $L$, the space of possible $Q$ is the space of symmetric matrices, which is  contractible. Thus, we sometimes think of polarizations (informally) as decompositions \eqref{eq:polar}.

\begin{definition}
\label{def:polM}
A {\em polarization} of a complex symplectic manifold $(M, \omega)$ is a holomorphic Lagrangian fibration $\pi: S \to Q$, where $S \subset M$ is open, and $Q$ is a complex manifold.
\end{definition}

By slightly refining the proof of Darboux's theorem, we obtain the following results.
\begin{theorem}
\label{thm:pol1}
(a) Let $(M, \omega)$ be a complex symplectic manifold. Suppose we are given $p \in M$ and a polarization $\sigma: T_pM \to T_pM/L_p$. Then,  there exist a neighborhood $S$ of $p$ in $M$, an open subset $N \subset \C^n$, an isomorphism $h: S \to T^*N$ as in Theorem~\ref{thm:Darboux}, and a polarization $\pi: S \to Q$ such that $(d\pi)_p: T_pM \to T_pQ$ is the linear polarization $\sigma$; that is, $\ker(d\pi)_p=L_p$.

(b) Let $(M, \omega)$ be a complex symplectic manifold. Suppose we are given $p \in M$, a polarization $\sigma: T_pM \to T_pM/L_p$, and also a complex Lagrangian submanifold $Q \subset M$ through $p$, such that $T_pQ$ intersects $L_p$ transversely. Then, we can find a neighborhood $S$ of $p$ in $M$, an open subset $N \subset \C^n$, an isomorphism $h: S \to T^*N$ as in Theorem~\ref{thm:CxLagNbhd}, with $h(Q\cap S) = N$, and a polarization $\pi: S \to Q$ such that $(d\pi)_p=\sigma$.
\end{theorem}

A {\em complex symplectic bundle} $E$ over a space $X$ is a complex vector bundle over $X$ equipped with continuously varying linear symplectic forms in the fibers. A {\em holomorphic symplectic bundle} $\E$ over a complex manifold $M$ is a holomorphic bundle over $M$ equipped with linear symplectic forms in the fibers, which produce a holomorphic section of $(\E \otimes \E)^*$. 

We can extend the notion of polarization to these kinds of bundles. 

\begin{definition}
Let $M$ be a complex manifold, and $E \to M$ a complex symplectic vector bundle.
A {\em polarization} in $E$ is a bundle map (projection) $\pi: E \to E/L$, given by the choice of a complex Lagrangian subbundle $L \subset E$.

Furthermore, if $\E$ is a holomorphic symplectic bundle, and $\L$ is a holomorphic Lagrangian subbundle, we say that $\pi$ is a {\em holomorphic polarization}.
\end{definition}

If a complex symplectic vector bundle $E$ has a polarization $\pi: E \to E/L$, we can find a Lagrangian subbundle $Q \subset E$ transverse to $E$ (using the contractibility of the space of such local choices). This gives a decomposition
\begin{equation}
\label{eq:polarbundles}
E = Q \oplus Q^*.
\end{equation}

For holomorphic bundles equipped with a holomorphic polarization $\pi: \E \to \E/\L$, we may not always find another holomorphic Lagrangian subbundle $\Q \subset \E$ transverse to $\L$, to identify $\E/\L$ with $\Q$. Thus, we do not automatically obtain a decomposition of the form  \eqref{eq:polarbundles}. 

\begin{example}
Let $M = \cp^1$, and $\E = \O \oplus \O$ with the standard complex symplectic structure on the fibers (such that the two copies of $\O$ are dual to each other). Let also $\L=\O(-1) \subset \E$ be the tautological bundle, viewed via the usual inclusion of lines in $\C^2$.  Then $\L$ gives a holomorphic polarization, but the quotient $\E/\L$ is isomorphic to $\O(1)$, which cannot be a subbundle of $\E$.
\end{example}

Finally, we mention a few well-known facts about spin structures. 
\begin{fact}
\label{fact:spin}
$(a)$ A complex vector bundle $E$ admits a spin structure if and only if $w_2(E)=0$ or, equivalently, the mod $2$ reduction of $c_1(E)$ vanishes. 

$(b)$ If they exist, spin structures on $E$ are in (non-canonical) bijection to the elements of $H^1(M; \Z/2).$ 

$(c)$ If $\E$ is holomorphic vector bundle, then a spin structure on $\E$ is the same as the data of a (holomorphic) square root of the determinant line bundle $\det(\E).$

$(d)$ If $E$ is a complex symplectic vector bundle, then the symplectic form gives rise to a trivialization of $\det(E)$. Hence, $c_1(E)=0$, so $E$ admits a spin structure. 
\end{fact}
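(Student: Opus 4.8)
The plan is to dispatch the four assertions in turn, since each is a standard fact and the content of the proof lies in assembling the right pieces. Parts $(a)$ and $(b)$ I would handle together: passing to the underlying oriented real bundle $E_{\R}$ (of rank $2\operatorname{rk} E$), a spin structure is by definition a lift of the $\operatorname{SO}$-frame bundle along the double cover $\Spin \to \operatorname{SO}$; the obstruction to the existence of such a lift is $w_2(E_{\R})$, and when it vanishes the set of lifts is a torsor over $H^1(M;\Z/2)$, via the long exact (\v{C}ech) cohomology sequence attached to the central extension $\Z/2 \to \Spin \to \operatorname{SO}$. The only input special to complex bundles is the identity $w_2(E_{\R}) = c_1(E) \bmod 2$, which is the degree-$2$ part of the general formula $w_{2i}(E_{\R}) = c_i(E) \bmod 2$ (equivalently: the total Stiefel--Whitney class of a complex bundle is the mod-$2$ reduction of its total Chern class).

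For part $(c)$, I would work with the Hermitian frame bundle $P$ of $\E$, a principal $\operatorname{U}(n)$-bundle with $n = \operatorname{rk}\E$, and identify the relevant double cover of $\operatorname{U}(n)$: the nontrivial connected one, classified by $\pi_1(\operatorname{U}(n)) = \Z \twoheadrightarrow \Z/2$, is $\widetilde{\operatorname{U}}(n) = \{(A,z) \in \operatorname{U}(n) \times \operatorname{U}(1) : z^2 = \det A\}$, and this coincides with the preimage of $\operatorname{U}(n) \subset \operatorname{SO}(2n)$ under $\Spin(2n) \to \operatorname{SO}(2n)$. Consequently a spin structure on $\E$ is precisely a lift of $P$ to a principal $\widetilde{\operatorname{U}}(n)$-bundle; projecting along $(A,z) \mapsto z$, such a lift is the same datum as a lift of the associated line bundle $\det\E$ along the squaring map $\operatorname{U}(1) \xrightarrow{2} \operatorname{U}(1)$, i.e.\ a square root of $\det\E$. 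Running this construction with holomorphic transition functions throughout yields the holomorphic version. The point requiring the most care --- indeed the only genuine subtlety in the whole Fact --- is that the phrase ``spin structure on a holomorphic bundle'' must be read inside the holomorphic category for this to be an equality of data rather than merely a bijection onto topological square roots; I would state that convention explicitly before proving $(c)$.

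Finally, for part $(d)$: if $\operatorname{rk} E = 2n$ and $\omega \in \Gamma(\Lambda^2 E^*)$ is the fiberwise non-degenerate two-form, then $\omega^{\wedge n}$ is a nowhere-vanishing section of $\Lambda^{2n} E^* \cong (\det E)^*$, since non-degeneracy of $\omega$ is equivalent to the pointwise non-vanishing of $\omega^{\wedge n}$. Hence $(\det E)^*$, and therefore $\det E$, is trivial, so $c_1(E) = c_1(\det E) = 0$; in particular the mod-$2$ reduction of $c_1(E)$ vanishes, and part $(a)$ produces a spin structure. No obstacle arises in $(a)$, $(b)$, or $(d)$.
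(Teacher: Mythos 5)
The paper records this as a \emph{Fact} without proof, so there is no argument to compare against. Your proposal is correct and is the standard derivation of all four assertions. A few brief comments.

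Parts $(a)$ and $(b)$: passing to the underlying oriented real bundle, using the central extension $\Z/2 \to \Spin(2n) \to \operatorname{SO}(2n)$ to obtain $w_2$ as the obstruction and $H^1(M;\Z/2)$ as the torsor of lifts, and invoking $w_2(E_{\R}) \equiv c_1(E) \pmod 2$ is exactly right and needs no modification.

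Part $(c)$: the identification of the preimage of $\operatorname{U}(n) \subset \operatorname{SO}(2n)$ in $\Spin(2n)$ with $\widetilde{\operatorname{U}}(n) = \{(A,z): z^2 = \det A\}$ is the correct key lemma, and your translation of a $\widetilde{\operatorname{U}}(n)$-lift into a square root of $\det\E$ via the projection $(A,z)\mapsto z$ is exactly how this is usually done. You are also right to flag the one genuine subtlety: in the paper's usage (e.g.\ $K_{L}^{1/2}$ in Bussi's construction), ``spin structure on a holomorphic bundle'' really means the holomorphic datum of a holomorphic line bundle with a holomorphic isomorphism of its square to the determinant, not merely a topological lift. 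Stating that convention explicitly, as you propose, is precisely what is needed for $(c)$ to be an identity of data rather than just a bijection, and it is what the later arguments in the paper (stabilization, gluing of the principal $\Z_2$-bundles $Q_{P,U,f,i}$) actually use.

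Part $(d)$: the observation that $\omega^{\wedge n}$ trivializes $(\det E)^*$, hence $c_1(E)=0$, is correct and complete.

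One small remark worth making explicit: the notion of spin structure on a complex bundle presupposes a choice of metric to form the $\operatorname{SO}$- or $\operatorname{U}$-frame bundle; since the space of Hermitian metrics on $E$ is contractible, the resulting set of spin structures is independent of this choice up to canonical isomorphism. You use this implicitly when you speak of ``the'' Hermitian frame bundle; it costs nothing to say so.
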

In particular, we will be interested in spin structures on complex manifolds $M$, i.e., on their tangent bundles. Such a spin structure is the same as the choice of a square root for the anti-canonical bundle $\det(TM)$ or, equivalently (after dualizing), of a square root $K_{M}^{1/2}$ for the canonical bundle $K_M=\det(TM)^*$.

\begin{remark}
When $L$ is a complex Lagrangian, a spin structure on $L$ is called an {\em orientation} in  \cite[Definition 1.16]{Bussi}. To prevent confusion with actual orientations, we will not use that terminology in this paper.
\end{remark}

\subsection{Perverse sheaves and vanishing cycles}
We now briefly review perverse sheaves on complex analytic spaces, in the spirit of \cite[Section 1.1]{Bussi}. Almost everything that we state goes back to the original work of Be{\u\i}linson,  Bernstein, and Deligne \cite{BBD}, but given the likelihood that the reader is more comfortable with the English language, we refer instead to Dimca's book \cite{Dimca} for details.

We will work over the base ring $\Z$. Let $X$ be a complex analytic space, and $D^b_c(X)$ the derived category of (complexes of) sheaves of $\Z$-modules on $X$ with constructible cohomology. We can consider the constant sheaf $\Z_X$ (or, more generally, a local system on $X$) to be an object of $D^b_c(X)$, supported in degree zero. 

On $D^b_c(X)$ we have Grothendieck's six operations $f^*, f^{!}, Rf_*, Rf_{!}, \sRHom, \otimes^L$, as well as the Verdier duality functor $\DD_X : D^b_c(X) \to D^b_c(X)^{\operatorname{op}}$.

To an object $\Cb \in D^b_c(X)$ we can associate its hypercohomology and hypercohomology with compact support, defined by
$$ \HH^k(\Cb) = H^k(R\pi_* (\Cb)), \ \ \   \HH^k_{\cs}(\Cb) = H^k(R\pi_{!} (\Cb)),$$
where $\pi: X \to *$ is the projection to a point. In particular, for $\Cb=\Z_X$, we recover the ordinary cohomology (resp. cohomology with compact support) of $X$.

Hypercohomology and hypercohomology with compact support are related by Verdier duality:
$$ \HH^k_{\cs}(\Cb) \otimes_{\Z} \k \cong \bigl(\HH^{-k}(\DD_X(\Cb)) \otimes_{\Z} \k\bigr)^*,$$
where $\k$ is any field. Over $\Z$, we have a (non-canonical) isomorphism as in the universal coefficients theorem:
\begin{equation}
\label{eq:Vdual}
\HH^k_{\cs}(\Cb) \cong \Hom(\HH^{-k}(\DD_X(\Cb)), \Z) \oplus \Ext^1(\HH^{-k-1}(\DD_X(\Cb)), \Z).
\end{equation}

For $x \in X$, let us denote by $i_x: * \hookrightarrow X$ the inclusion of $x$. 
\begin{definition}
A {\em perverse sheaf} on $X$ is an object $\Cb \in D^b_c(X)$ such that
$$ \dim \{ x \in X \mid H^{-m}(i_x^*\Cb) \neq 0 \text{ or } H^{m}(i_x^{!}\Cb) \neq 0 \}  \leq 2m$$
for all $m \in \Z$.
\end{definition}

\begin{example}
Let $X$ be a complex manifold of complex dimension $n$, and $\L$ a $\Z$-local system on $X$. Then $\L[n]$ is a perverse sheaf on $X$.
\end{example}

Let $\Perv(X)$ be the full subcategory of $D^b_c(X)$ consisting of perverse sheaves. Then $\Perv(X)$ is an Abelian category (unlike  $D^b_c(X)$, which is only triangulated). Another way in which perverse sheaves behave more like sheaves rather than complexes of sheaves (elements of $D^b_c(X)$) is that they satisfy the following descent properties.

\begin{theorem}
\label{thm:descent}
Let $\{U_i\}_{i \in I}$ be an analytic open cover for $X$. 

$(a)$ Suppose $\Pb, \Qb$ are perverse sheaves on $X$, and for each $i$ we have a morphism $\alpha_i : \Pb|_{U_i} \to \Qb|_{U_i}$ in $\Perv(U_i)$, such that $\alpha_i$ and $\alpha_j$ agree on the double overlap $U_i \cap U_j$, for all $i, j \in I$. Then, there is a unique morphism $\alpha: \Pb \to \Qb$ in $\Perv(X)$ whose restriction to each $U_i$ is $\alpha_i$.

$(b)$ Suppose for each $i \in I$ we have a perverse sheaf $\Pb_i$ on $U_i$, and we are given isomorphisms $\alpha_{ij}: \Pb_i|_{U_i \cap U_j} \to \Pb_j|_{U_i \cap U_j}$. Suppose $\alpha_{ii} = \id$ for all $i$, and that on triple overlaps $U_i \cap U_j \cap U_k$ we have $\alpha_{jk} \circ \alpha_{ij}= \alpha_{ik}$. 

Then, there exists $\Pb \in \Perv(X)$, unique up to canonical isomorphism, with isomorphisms $\beta_i : \Pb|_{U_i} \to \Pb_i$ for all $i \in I$, such that $\alpha_{ij} \circ \beta_i|_{U_i \cap U_j} = \beta_j|_{U_i \cap U_j}$ for $i, j\in I$.
\end{theorem}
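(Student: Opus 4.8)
The plan is to prove that perverse sheaves form a stack for the analytic topology on $X$: part (a) is the assertion that the presheaves of $\Hom$-groups are sheaves, and part (b) is effectivity of descent for objects. Both rest on two local properties of the perverse $t$-structure on $D^b_c$, which I would record first. First, for $\Pb,\Qb\in\Perv(X)$ one has $\Hom_{D^b_c(X)}(\Pb,\Qb[i])=0$ for all $i<0$: indeed $\Qb[i]$ then lies in ${}^pD^{\geq -i}\subseteq {}^pD^{\geq 1}$ while $\Pb\in{}^pD^{\leq 0}$, so the group vanishes by the orthogonality axiom of a $t$-structure. Second, an object $\Cb\in D^b_c(X)$ is perverse if and only if $\Cb|_{U_i}$ is perverse for every $i$; this is immediate from the definition, since the stalk cohomologies $H^{-m}(i_x^*\Cb)$, the costalk cohomologies $H^m(i_x^!\Cb)$, and the dimensions of the loci on which they are nonzero are all computed locally on $X$. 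I will also use freely that $\Hom_{\Perv(U)}=\Hom_{D^b_c(U)}$ on perverse objects, so every $\alpha_i$, $\alpha_{ij}$ can be regarded as a (iso)morphism in a derived category.

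For (a), I would compute $\RHom_X(\Pb,\Qb)=R\Gamma\bigl(X,\sRHom(\Pb,\Qb)\bigr)$ by \v{C}ech descent along $\{U_i\}$, obtaining the convergent spectral sequence
$$ E_1^{p,q} = \prod_{i_0<\cdots<i_p}\Ext^q_{U_{i_0\cdots i_p}}\bigl(\Pb|_{U_{i_0\cdots i_p}},\,\Qb|_{U_{i_0\cdots i_p}}\bigr)\;\Longrightarrow\;\Ext^{p+q}_X(\Pb,\Qb), $$
with $U_{i_0\cdots i_p}=U_{i_0}\cap\cdots\cap U_{i_p}$. Restrictions of perverse sheaves to opens being perverse, the first fact kills the whole region $q<0$; hence all differentials into or out of $E_r^{0,0}$ vanish, so $E_2^{0,0}=E_\infty^{0,0}$ is the only contribution to total degree $0$, whence $\Hom_X(\Pb,\Qb)=E_2^{0,0}=\ker\bigl(d_1:\prod_i\Hom_{U_i}(\Pb|_{U_i},\Qb|_{U_i})\to\prod_{i<j}\Hom_{U_{ij}}(\Pb|_{U_{ij}},\Qb|_{U_{ij}})\bigr)$, the kernel of the alternating restriction map. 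Existence and uniqueness of the glued morphism $\alpha$ in (a) are exactly this identification applied to the family $(\alpha_i)$.

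For (b), I would first handle a two-set cover $X=U\cup V$, with $W=U\cap V$, inclusions $j_U,j_V,j_W$ into $X$, and $j':W\hookrightarrow U$, $j'':W\hookrightarrow V$. Given $\Pb_U,\Pb_V$ and $\alpha:\Pb_U|_W\xrightarrow{\sim}\Pb_V|_W$, I would set
$$ \Pb := \operatorname{Cone}\Bigl( j_{W!}(\Pb_U|_W)\xrightarrow{\;(\varepsilon_U,\ -\varepsilon_V\circ j_{W!}\alpha)\;} j_{U!}\Pb_U\oplus j_{V!}\Pb_V \Bigr)\in D^b_c(X), $$
where $\varepsilon_U,\varepsilon_V$ are the counit maps coming from $j_{W!}=j_{U!}j'_!$ and $j_{W!}=j_{V!}j''_!$. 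Applying $j_U^*$ to the defining triangle and using base change ($j_U^*j_{W!}\cong j'_!$ and $j_U^*j_{V!}\Pb_V\cong j'_!(\Pb_V|_W)\cong j'_!(\Pb_U|_W)$ via $\alpha$), one finds the map becomes $(\varepsilon,\pm\id):j'_!(\Pb_U|_W)\to\Pb_U\oplus j'_!(\Pb_U|_W)$, whose cone is identified with $\Pb_U$; this yields an isomorphism $\beta_U:j_U^*\Pb\xrightarrow{\sim}\Pb_U$, and symmetrically $\beta_V$, with $\alpha\circ\beta_U|_W=\beta_V|_W$. Since $\Pb$ now restricts to a perverse sheaf on each member of $\{U,V\}$, the second fact shows $\Pb\in\Perv(X)$. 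Uniqueness up to canonical isomorphism then comes from (a): for a second solution $(\Pb',\beta'_U,\beta'_V)$, the isomorphisms $(\beta'_U)^{-1}\beta_U$ and $(\beta'_V)^{-1}\beta_V$ agree on $W$ (all of them being compatible with $\alpha$) and so glue to a unique isomorphism $\Pb\to\Pb'$. An arbitrary cover is reduced to this by the usual bootstrap: glue two members at a time, carrying the $\beta_i$ along by (a) and invoking the cocycle condition $\alpha_{jk}\circ\alpha_{ij}=\alpha_{ik}$ to keep all matching data compatible at each stage; for an infinite cover one passes first to an increasing exhaustion, where effectivity of descent uses that $\Perv(X)$ is the heart of a $t$-structure, hence a $1$-category, so that a strict $1$-cocycle of isomorphisms already suffices.

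The step I expect to be the main obstacle is not an estimate but the bookkeeping inside a triangulated category. The cone defining $\Pb$ is a priori only well defined up to non-canonical isomorphism, and it is not obvious on the face of it that its restrictions are the given $\Pb_U,\Pb_V$ rather than merely abstractly isomorphic to them; this is precisely why (a) must be proved first and then used to rigidify (b), and why the base-change identifications — including the signs in the Mayer--Vietoris map and the verification that the relevant component of $j_U^*$ of that map is $\pm\id$ — have to be tracked carefully. The one other delicate point is the passage from finite to infinite covers, where the argument genuinely uses that we work inside the heart $\Perv$ rather than all of $D^b_c(X)$, which does not satisfy descent for objects in general.
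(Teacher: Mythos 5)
The paper gives no proof of this statement; it is recorded as a standard fact, with the reader referred to Be{\u\i}linson--Bernstein--Deligne and to Dimca's book for details. Your argument --- the \v{C}ech spectral sequence for $\RHom$ together with the vanishing of $\Ext^{<0}$ between perverse sheaves for part (a), and the Mayer--Vietoris cone on a two-set cover, identified via base change, checked to be perverse by the local criterion, and then bootstrapped using (a) for part (b) --- is precisely the classical argument the paper is implicitly invoking, and it is correct, including your correct identification of the two delicate points (rigidifying the cone via (a), and the passage to infinite covers using that $\Perv$ is an abelian $1$-category).
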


Further examples of perverse sheaves come from vanishing cycles. Given a holomorphic function $f: X \to \C$, denote $X_0 = f^{-1}(0)$ and $X_* = X \setminus X_0$. Let $\rho: \widetilde{\C^*} \to \C^*$ be the universal cover of $\C^* = \C \setminus \{0\}$, and $p: \widetilde{X_*} \to X_*$ the $\Z$-cover of $X$ obtained by pulling back $\rho$ under $f$. Let $\pi:  \widetilde{X_*} \to X$ be the composition of $p$ with the inclusion of $X_*$ into $X$, and let $i: X_0 \hookrightarrow X$ be the inclusion. We then have a {\em nearby cycle functor}
$$ \psi_f: D^b_c(X) \to D^b_c(X_0), \ \ \psi_f = i^* \circ R\pi_* \circ \pi^*.$$

For each $\Cb \in D^b_c(X)$, there is a comparison morphism $\Xi(\Cb): i^* \Cb \to \psi_f(\Cb)$. We define the {\em vanishing cycle functor} $ \phi_f: D^b_c(X) \to D^b_c(X_0)$ by extending $\Xi(\Cb)$ to a distinguished triangle
$$ i^* \Cb \xrightarrow{\phantom{aa}\Xi(\Cb)\phantom{aa}} \psi_f(\Cb) \longrightarrow \phi_f(\Cb)  \longrightarrow i^* \Cb[1]$$
in $D^b_c(X_0)$. 

\begin{theorem}[cf. Theorem 5.2.21 in \cite{Dimca}]
The shifted functors $\psi^p_f := \psi_f[-1]$ and $\phi_f^p := \phi_f[-1]$ both map $\Perv(X)$ into $\Perv(X_0)$.
\end{theorem}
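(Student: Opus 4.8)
The plan is to recognize the statement as the assertion that $\psi^p_f$ and $\phi^p_f$ are $t$-exact for the perverse $t$-structures on $D^b_c(X)$ and $D^b_c(X_0)$: since $\Perv(X) = {}^{p}\!D^{\leq 0}(X) \cap {}^{p}\!D^{\geq 0}(X)$ is the heart of that $t$-structure, the claim that $\psi^p_f$ and $\phi^p_f$ carry $\Perv(X)$ into $\Perv(X_0)$ is precisely $t$-exactness. This is Theorem 5.2.21 of \cite{Dimca} (going back to \cite{BBD}), so one option is simply to cite it; below I outline the argument I would reproduce.

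First I would reduce to proving \emph{right} $t$-exactness, i.e.\ $\psi^p_f({}^{p}\!D^{\leq 0}(X)) \subseteq {}^{p}\!D^{\leq 0}(X_0)$ and likewise for $\phi^p_f$. This uses the compatibility of the two functors with Verdier duality, $\DD_{X_0}\circ\psi^p_f \cong \psi^p_f\circ\DD_X$ and $\DD_{X_0}\circ\phi^p_f \cong \phi^p_f\circ\DD_X$, together with the fact that $\DD_X$ preserves perversity: applying $\DD$ converts a right $t$-exactness statement into the corresponding left $t$-exactness statement, so the two halves together give $t$-exactness, and it suffices to establish the right half.

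For right $t$-exactness of $\psi^p_f = \psi_f[-1]$ one argues locally around a point $x \in X_0$. By Milnor's fibration theorem in the analytic setting (L\^{e}'s theorem), the stalk $(\psi_f\Cb)_x$ is computed by the cohomology of $\Cb$ over the Milnor fiber $F_x = B_\varepsilon(x) \cap f^{-1}(\delta)$ of $f$ at $x$ (for $0 < \delta \ll \varepsilon \ll 1$), and the costalk $i_x^!(\psi_f\Cb)$ by the corresponding compactly supported cohomology. The essential input is that $F_x$, although of real dimension $2(d-1)$ with $d = \dim_x X$, behaves cohomologically like a Stein space of \emph{complex} dimension $d-1$ (it is homotopy equivalent to its complex link); combined with the Artin--Grothendieck vanishing theorem for perverse sheaves on Stein analytic spaces, and with $\dim_x X_0 \leq d-1$, this yields exactly the support and cosupport bounds placing $\psi^p_f\Cb$ in ${}^{p}\!D^{\leq 0}(X_0)$. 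Then right $t$-exactness of $\phi^p_f$ follows from the defining triangle, shifted by $[-1]$,
$$ i^*\Cb[-1] \longrightarrow \psi^p_f\Cb \longrightarrow \phi^p_f\Cb \longrightarrow i^*\Cb,$$
the long exact sequence of perverse cohomology, and the right $t$-exactness of $i^*$ for the closed immersion $i\colon X_0 \hookrightarrow X$: since ${}^{p}\!\cH^j(i^*\Cb) = 0$ for $j > 0$ and ${}^{p}\!\cH^j(\psi^p_f\Cb) = 0$ for $j > 0$, one reads off ${}^{p}\!\cH^j(\phi^p_f\Cb) = 0$ for $j > 0$.

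The step I expect to be the main obstacle is the Stein-like cohomological behavior of Milnor fibers together with the attendant Artin--Grothendieck vanishing for perverse sheaves on them; this rests on the local conic structure of analytic germs and Lefschetz-type hyperplane theorems, and is the genuine geometric content of the argument. The duality compatibilities used in the reduction step are themselves nontrivial but standard. Since all of this is assembled in \cite[Theorem 5.2.21]{Dimca}, in the body of the paper I would simply cite that result.
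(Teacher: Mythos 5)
Your proposal matches the paper's treatment exactly: the theorem is stated as a background fact with a citation to \cite[Theorem 5.2.21]{Dimca}, and no proof is given. Your outline of the standard argument (reduction to right $t$-exactness via duality, local Milnor-fiber estimate for $\psi^p_f$, and the triangle argument for $\phi^p_f$) is accurate and is indeed the argument underlying Dimca's proof, but since the paper simply cites the result, a citation is all that is required here.
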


To make this more concrete, suppose $U$ is an open subset of the affine space $\C^n$, and $f: U \to \C$ is holomorphic. For every $x \in U_0=f^{-1}(0)$, we define the {\em Milnor fiber} $F_x$ to be the intersection of a small open ball $B_\delta(x) \subset \C^n$ (of radius $\delta$) with the fiber $f^{-1}(\epsilon)$, for $0 < \epsilon \ll \delta$. By \cite[Proposition 4.2.2]{Dimca}, we have a natural isomorphism
$$ H^k(\psi_f \Cb)_x \cong H^k(F_x, \Cb).$$
In particular, if $\Cb = \Z_U$ and $x$ is the unique critical point of $f$, then for $y \neq x$ the cohomology $ H^k(\psi_f(\Z_X))_y$ is $\Z$ in degree zero, and $0$ otherwise. At $x$ we have 
$$H^k(\psi_f\Z_U)_y \cong H^k(F_x; \Z) \cong \begin{cases} \Z & \text{if }k=0,\\
\Z^{\mu}& \text{if }k=n-1,\\
0 & \text{otherwise},
\end{cases}
$$
where $\mu$ is the Milnor number of $f$ at $x$. 

As for the vanishing cycle $\phi_f\Z_U$, its cohomology is supported at $x$, where it is given by the reduced cohomology $\tilde{H}^*(F_x; \Z)$, which is $\Z^{\mu}$ in degree $n-1$. Thus, if we consider the perverse sheaf $\Z_U[n]$, its image under $\phi_f^p$ is (up to isomorphism in $\Perv(U_0)$) the skyscraper sheaf supported at $x$ in degree zero, with stalk $\Z^{\mu}$.

\begin{example}
\label{ex:U}
When $U=\C^n$ with coordinates $x=(x_1, \dots, x_n)$ and $f$ is given by $f(x_1, \dots, x_n)=x_1^2 + \dots + x_n^2$, then the unique critical point is $x=0$. The Milnor fiber $F_x$ is diffeomorphic to $TS^{n-1}$, and the Milnor number is $\mu=1$. Therefore, $\phi_f^p(\Z_U[n])$ is the skyscraper sheaf $\Z$ at $x=0$, in degree $0$.
\end{example}

Now suppose we have a complex manifold $U$, and $f: U \to \C$ a holomorphic function. Let $X=\Crit(f)$ be the critical locus of $f$. Note that $f|_X : X \to \C$ is locally constant, so $X$ decomposes as a disjoint union of components $X_c=f^{-1}(c) \cap X$, over $c \in f(X)$. Following \cite[Definition 1.7]{Bussi}, we define the {\em perverse sheaf of vanishing cycles} of $(U, f)$ to be
\begin{equation}
\label{eq:PVb}
 \PVb_{U, f} = \bigoplus_{c \in f(X)} \phi^p_{f-c}(\Z_U[\dim U])|_{X_c}.
 \end{equation}

\begin{example}
\label{ex:PVclean}
Let $U =\C^n$ and $f(x_1, \dots, x_n)=x_{k+1}^2 + \dots + x_n^2$, for some $k$ with $0 \leq k \leq n$. Then $X = X_0 = \C^k \subset \C^n$ is the subspace with coordinates $x_1, \dots, x_k$. When $k=0$, we are in the setting of Example~\ref{ex:U} and $\PVb_{U,f}$ is the skyscraper sheaf $\Z$ over $0$. In general, $\PVb_{U,f}$ is the constant sheaf $\Z_X[k]$ over $X$.
\end{example}

\subsection{Bussi's construction}
\label{sec:Bussi}
We are now ready to review Bussi's work from \cite{Bussi}, which associates to a pair of complex Lagrangians a perverse sheaf on their intersection.

Let $(M,\omega)$ be a complex symplectic manifold, and $L_0, L_1 \subset M$ two complex Lagrangians. We assume that $L_0$ and $L_1$ are equipped with spin structures, that is, square roots $K_{L_0}^{1/2}$ and $K_{L_1}^{1/2}$. (See Fact~\ref{fact:spin} and the paragraph after it.) 

We denote by $X$ the intersection $L_0 \cap L_1$. It will be important to view $X$ not solely as a subset of $M$, but as a complex analytic space (the complex-analytic analogue of a scheme); that is, we keep track of the structure sheaf $\O_X$. In particular, $X$ may not be reduced, and we denote by $X^{\red}$ its reduced subspace (with the same underlying topological space as $X$).

\begin{definition}
\label{def:L0chart}
An {\em $L_0$-chart} on $M$ is the data $(S, P, U, f, h, i)$, where:
\begin{itemize}
\item $S \subset M$ is open; 
\item $P= S \cap X$ and $U=S \cap L_0$;
\item $f: U \to \C$ is a holomorphic function;
\item $h:S \to T^*U$ is an isomorphism that takes $U$ to the zero section, $S \cap L_1$ to the graph of $df$, and $P$ to the critical locus $\Crit(f)$;
\item $i: P \to \Crit(f) \subset U$ is the isomorphism of analytic sets induced by the inclusion $P \hookrightarrow U$.
\end{itemize}
\end{definition}

\begin{remark}
To be consistent with the convention in \cite[Section 2]{Bussi}, we will drop $S$ and $h$ from the notation, and denote the $L_0$-chart by $(P, U, f, i)$.
\end{remark}

We can construct $L_0$-charts around any $x \in X$, as follows. We start by choosing a polarization of $T_xM$ that is transverse to both $L_0$ and $L_1$. Using Theorem~\ref{thm:pol1} (b), we can extend this to a local polarization $\pi: S \to U$ (in the sense of Definition~\ref{def:polM}) that is transverse to $L_0$ and $L_1$. This gives the desired $L_0$-chart. Conversely, an $L_0$-chart gives a polarization $\pi: S \to U$, obtained by pulling back under $h$ the projection $T^*U \to U$.

Given an $L_0$-chart $(P, U, f, i)$, the polarization $\pi: S \to U$ naturally induces a local biholomorphism between $L_0$ and $L_1$, and thus a local isomorphism between their canonical bundles
$$ \Theta: K_{L_0}|_P \xrightarrow{\cong} K_{L_1}|_P.$$
We denote by 
$$ \pi_{P, U, f, i} : Q_{P, U, f, i} \to P$$
the principal $\Z_2$-bundle parametrizing local isomorphisms between the chosen square roots (spin structures)
$$ \vartheta: K^{1/2}_{L_0}|_P \xrightarrow{\cong} K^{1/2}_{L_1}|_P$$
such that $\vartheta \otimes \vartheta = \Theta.$

On the critical locus $\Crit(f)$, we have a perverse sheaf of vanishing cycles $\PVb_{U,f}$ as in \eqref{eq:PVb}. We pull it back to $X$ under the isomorphism $i$, and then twist it by tensoring it with the bundle $Q_{P, U, f, i}$. This produces a perverse sheaf over $P \subset X$, for any $L_0$-chart. Using the descent properties (Theorem~\ref{thm:descent}), Bussi shows that one can glue these perverse sheaves to obtain a well-defined object 
$$\Pb_{L_0, L_1} \in \Perv(X)$$
with the property that for any $L_0$-chart there is a natural isomorphism
\begin{equation}
\label{eq:omegaL0}
 \omega_{P,U,f, i}: \Pb_{L_0, L_1}|_P \xrightarrow{\phantom{b}\cong \phantom{b}}  i^*(\PVb_{U,f}) \otimes_{\Z_2} Q_{P, U, f, i}.
 \end{equation}

The hypercohomology $\HH^*(\Pb_{L_0, L_1})$ is a sheaf-theoretic model for the Lagrangian Floer cohomology of $L_0$ and $L_1$. 

\section{A stabilization property}
\label{sec:prop}
In this section we establish a property of the perverse sheaves $\Pb_{L_0, L_1}$ that will be useful to us when constructing the three-manifold invariants in Section~\ref{sec:Invariant}.

\begin{proposition}
\label{prop:stabL}
Let $(M', \omega)$ be a complex symplectic manifold, and $M \subset M'$ a complex symplectic  submanifold. We denote by $\Phi: M \hookrightarrow M'$ the inclusion. We are given complex Lagrangians $L_0, L_1 \subset M$ and $L_0', L_1' \subset M'$ satisfying $ L_0 \subset L_0', \ L_1 \subset L_1'$
and 
$$L_0 \cap L_1 = L_0' \cap L_1'$$
as complex analytic spaces.

Let $N:=N_{MM'} = (TM)^{\omega}$ be the sub-bundle of $TM'|_M$ which is the symplectic complement to $TM$. Suppose we have a direct sum decomposition of $N$ into holomorphic Lagrangian sub-bundles
$$ N = V_0 \oplus V_1.$$
From here we obtain a direct sum decomposition
$$ TM'|_M = TM \oplus V_0 \oplus V_1.$$
We assume that, under this decomposition, the tangent spaces to the Lagrangians are related by
$$ TL_0'|_{L_0} = TL_0 \oplus V_0|_{L_0} \oplus 0, \  \ TL_1'|_{L_1} = TL_1 \oplus 0 \oplus V_1|_{L_1}.$$
 
Further, we assume that the Lagrangians $L_0, L_1, L_0', L_1'$ come equipped with spin structures, such that, for $i=0,1$, the spin structure on $L_i'$ is the direct sum of that on $L_i$ and a given spin structure on $V_i$. Also, the spin structure on $V_1$ should be obtained from the one on $V_0$ via the natural duality isomorphism $V_1 \cong V_0^*$ induced by $\omega$.

We are also given a non-degenerate holomorphic quadratic form $q \in H^0(\Sym^2 V_0^*)$. We assume that the spin structure on $V_0$ is self-dual under the isomorphism $V_0 \cong V_0^*$ induced by $q$.

Then, we obtain a natural isomorphism of perverse sheaves on $X=L_0 \cap L_1$:
$$\Stab: \Pb_{L_0, L_1} \xrightarrow{\phantom{b}\cong\phantom{b}} \Pb_{L'_0, L'_1}.$$
\end{proposition}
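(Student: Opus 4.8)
The plan is to reduce the assertion to a local comparison of perverse sheaves of vanishing cycles and then to assemble the local isomorphisms using the gluing property of perverse sheaves (Theorem~\ref{thm:descent}).

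Fix a point $x \in X = L_0 \cap L_1 = L'_0 \cap L'_1$. First I would choose a linear polarization of $T_x M$ transverse to $T_x L_0$ and $T_x L_1$ and, via Theorem~\ref{thm:pol2}, extend it to an $L_0$-chart $(P, U, f, i)$ on $M$ near $x$, with underlying polarization $\pi\colon S \to U \subseteq L_0$. The quadratic form $q$ gives an isomorphism $V_0 \to V_0^*$, and composing with the duality $V_0^* \cong V_1$ induced by $\omega$ we obtain an isomorphism $V_0 \xrightarrow{\ \sim\ } V_1$; its graph $\Delta \subset V_0 \oplus V_1 = N$ is a holomorphic Lagrangian sub-bundle transverse to both summands. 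Since, along $L_0$ and $L_1$ respectively, the $N$-parts of $TL'_0$ and $TL'_1$ are $V_0$ and $V_1$, the Lagrangian subspace $\ker(d\pi)_x \oplus \Delta_x$ of $T_x M'$ is transverse to both $T_x L'_0$ and $T_x L'_1$, and meets $T_x M$ exactly in $\ker(d\pi)_x$. Applying Theorem~\ref{thm:pol2} once more to this polarization on $M'$, and arranging the resulting chart to restrict to $(P, U, f, i)$ over $M$, yields an $L'_0$-chart $(P', U', f', i')$ on $M'$ near $x$ of product form $U' \cong U \times W$, with $U$ identified with $U \times \{0\}$, $W$ a polydisc about the origin of the typical fibre of $V_0$, and $f'(u,w) = f(u) + q(w)$. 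In particular $\Crit(f') = \Crit(f) \times \{0\}$, so $P' = P$, consistently with $L_0 \cap L_1 = L'_0 \cap L'_1$.

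Over $\Crit(f') = \Crit(f) \cong P$ there is a canonical Thom--Sebastiani isomorphism $\PVb_{U', f'} \cong \PVb_{U, f} \otimes_{\Z_2} \mathcal{L}$, where $\mathcal{L}$ is the rank-one $\Z_2$-local system parametrizing square roots of $\det q$; this is the fibrewise form of Example~\ref{ex:PVclean}. On the other hand, the polarization of the $L'_0$-chart induces the local isomorphism $\Theta'\colon K_{L'_0}|_{P} \to K_{L'_1}|_{P}$, and under the identifications $K_{L'_i}|_{P} \cong K_{L_i}|_P \otimes \det(V_i^*)|_P$, together with $\det(V_1^*) \cong \det(V_0^*)$ coming from $\omega$ and $q$, one checks that $\Theta'$ decomposes as $\Theta$ tensored with the evident isomorphism in the $\det(V_0^*)$-factor. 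Feeding the stipulated compatibilities among the spin structures of $L_0, L_1, L'_0, L'_1, V_0, V_1$ into this, the $\Z_2$-torsor $Q_{P', U', f', i'}$ of compatible square roots gets identified with $Q_{P, U, f, i}$ twisted by a $\Z_2$-local system $\mathcal{L}'$ recording a square root of $\det q$ relative to the spin structure on $V_0$; the hypothesis that this spin structure be self-dual for $q$ says exactly that $\mathcal{L}'$ is canonically trivialized and cancels $\mathcal{L}$. Substituting into \eqref{eq:omegaL0} for the two charts then produces a canonical isomorphism $\Pb_{L_0, L_1}|_P \xrightarrow{\ \sim\ } \Pb_{L'_0, L'_1}|_P$.

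It remains to verify that these local isomorphisms are independent of the auxiliary choices — the polarization/chart, the product splitting $U' \cong U \times W$, and the reduction of $f'$ to the normal form $f(u) + q(w)$ — which follows from Bussi's uniqueness statement for the comparison maps $\omega_{P,U,f,i}$ in \eqref{eq:omegaL0} together with connectedness of the space of such choices (the space of transverse polarizations being contractible). Independence gives agreement on overlaps, so Theorem~\ref{thm:descent}(a) glues them to the desired isomorphism $\Stab\colon \Pb_{L_0, L_1} \xrightarrow{\ \sim\ } \Pb_{L'_0, L'_1}$ in $\Perv(X)$. I expect the main obstacle to be the chart construction of the second paragraph: it requires a polarized Darboux / Lagrangian-neighbourhood theorem relative to the symplectic submanifold $M \subset M'$ that is simultaneously compatible with the splitting $N = V_0 \oplus V_1$, with both Lagrangians $L'_0, L'_1$, and with the normal form $f' = f(u) + q(w)$ — together with the bookkeeping, in the last two paragraphs, needed to match the $\Z_2$-torsors on the nose. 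The vanishing-cycle input itself is essentially Example~\ref{ex:PVclean}.
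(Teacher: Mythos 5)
Your proposal is correct and follows essentially the same strategy as the paper's proof: produce local $L_0$- and $L'_0$-charts from compatible polarizations (using the holomorphic Lagrangian sub-bundle of $N$ that is the graph, via $\omega$, of the isomorphism $V_0\to V_0^*$ given by $q$), compare the two perverse sheaves of vanishing cycles, identify the $\Z_2$-torsors of square roots using the spin-structure hypotheses so that the twist by $\det q$ cancels, and glue with Theorem~\ref{thm:descent}(a). The one cosmetic difference is that the paper invokes Bussi's Theorem~1.13 as a black box to relate $\PVb_{U',g}$ and $\PVb_{U,f}$ with $g|_U=f$ arbitrary, while you achieve the exact fibered normal form $f'(u,w)=f(u)+q(w)$ via a parametrized complex Morse lemma and then apply Thom--Sebastiani; these amount to the same thing, since Bussi's theorem is the chart-independent packaging of precisely this normal-form-plus-Thom--Sebastiani argument.
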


\begin{proof}
The bilinear form associated to $q$ gives a holomorphic section of $\Hom(V_0, V_0^*)$. We can think of it as a bundle map $s: V_0 \to V_0^*$, which is an isomorphism in every fiber. We identify $V_0^*$ with $V_1$, and let $W \subset N=V_0 \oplus V_1$ be the graph of $s$. Then, $W$ is a holomorphic Lagrangian sub-bundle of $N$, and the linear projection $\pi_N : N \to N/W$ is a global holomorphic polarization of $N$, transverse to $V_0$ and $V_1$. 

Near every $x \in X$, choose a polarization $\pi_{M, x}$ of the tangent space $T_xM$ transverse to $T_xL_0$ and $T_xL_1$. This induces a polarization $\pi_S$ on a neighborhood $S$ of $x$ in $M$.  As described in Section~\ref{sec:Bussi}, we can find an $L_0$-chart $(P, U, f, i)$ induced by this polarization, with open neighborhoods $P \subset X$ and $U \subset L_0$ around $x$, a holomorphic function $f: U \to \C$, the inclusion $ P \hookrightarrow U$ giving rise to an isomorphism $i: P \to \Crit(f) \subset U$, and the other Lagrangian $L_1$ represented locally as the graph of $df$. We get a natural identification
\begin{equation}
\label{eq:P01}
 \Pb_{L_0, L_1}|_P \cong i^*(\PVb_{U, f}) \otimes_{\Z_2} Q_{P, U, f, i},
\end{equation}
with $Q_{P, U, f, i}$ being the principal $\Z_2$-bundle on $P$ that parameterizes square roots of the local isomorphism $\Theta: K_{L_0}|_X \to K_{L_1}|_X$. Here, $\Theta$ is induced by the polarization $\pi_S$. The sections of $Q_{P, U, f, i}$ are local isomorphisms between $K_{L_0}^{1/2}$ to $ K_{L_1}^{1/2}$.

We now combine the polarizations $\pi_{M, x}$ and $\pi_N$ to obtain a polarization $\pi_{M', x}$ for $T_xM'$, transverse to $T_xL_0'$ and $T_xL_1'$. From here we obtain a polarization $\pi_{S'}$ of  neighborhood $S' \supset S$ of $x$ in $M$, such that $\pi_{S'}$ restricts to $\pi_S$ on $S$. Next, we obtain an $L_0'$-chart $(P', U', g, j)$ induced by $\pi_{S'}$, and extending our previous chart $(P, U, f, i)$. Here, $P' \subseteq P$ is a possibly smaller neighborhood of $x$ in $X$, the Lagrangian $L_1'$ is locally the graph of $dg$, the function $g: U' \to \C$ satisfies $g|_U = f$, and $j$ is the composition of $i$ with the restriction to $\Crit(f)$ of the inclusion $\Phi: M \to M'$. We have
\begin{equation}
\label{eq:P01'}
  \Pb_{L_0', L_1'}|_{P'} \cong j^*(\PVb_{U', g}) \otimes_{\Z_2} Q_{P', U', g, j},
\end{equation}
where $Q_{P', U', g, j}$ parameterizes square roots of the local isomorphism $\Theta': K_{L'_0}|_X \to K_{L'_1}|_X$, induced by $\pi_{S'}$.  We view the sections of $Q_{P', U', g, j}$ as local isomorphisms between $K_{L'_0}^{1/2}$ and $ K_{L'_1}^{1/2}$.

We can relate $\PVb_{U', g}$ to $\PVb_{U, f}$ by applying Theorem 1.13 in \cite{Bussi}. This gives a natural identification
\begin{equation}
\label{eq:PUU}
 \PVb_{U, f} \cong \Phi|_X^*(\PVb_{U', g})  \otimes_{\Z_2} P_{\Phi}.
\end{equation}
where $P_{\Phi}$ parametrizes square roots of the local isomorphism 
$$ J_{\Phi}: K_{L_0}^{\otimes 2}|_{X^{\red}} \xrightarrow{\cong} \Phi|^*_{X^{\red}} (K_{L_0'}^{\otimes 2})$$
induced by $q$. Indeed, by construction, the quadratic form that appears in the definition of $J_{\Phi}$ in \cite[Definition 1.11]{Bussi} is the restriction of our given $q \in H^0(\Sym^2 V_0^*)$. 

Moreover, we have
$$ \Phi^* K_{L_0'} \cong K_{L_0} \otimes \det(V^*_0).$$
Thus, the sections of $J_{\Phi}$, which are locally defined maps from $K_{L_0}|_{X^{\red}}$ to $ \Phi|^*_{X^{\red}} (K_{L_0'})$, can be interpreted as local sections of $ \det(V_0^*)$ that square to $\det(q)$. 

Let us also compare the bundle $Q_{P, U, f, i}$ from \eqref{eq:P01} to the bundle $Q_{P', U', g, j}$ from \eqref{eq:P01'}. We have
$$\Phi^* K_{L_i'}^{1/2} \cong K_{L_i}^{1/2} \otimes \det(V^*_i)^{1/2}, \ i=0,1,$$
where $\det(V^*_i)^{1/2}$ are the duals of the given spin structures on $V_i$. Therefore, 
\begin{equation}
\label{eq:RPhi}
  Q_{P', U', g, j} \cong  Q_{P, U, f, i}|_{P'} \otimes_{\Z_2}  R_{\Phi},
\end{equation}
where the sections of $R_{\Phi} \to P'$ are maps $ \det(V^*_0)^{1/2} \to \det(V^*_1)^{1/2}$, whose squares are the isomorphism between $ \det(V^*_0)$ and $\det(V^*_1)$ induced by $\omega$ and $\det(q)$. The form $q$ makes an appearance because we used it to relate the polarization on $S \subset M$, which gives \eqref{eq:P01}, to the polarization on $S' \subset M'$, which gives \eqref{eq:P01'}.

We claim that we have a canonical isomorphism
\begin{equation}
\label{eq:PR}
(i^* P_{\Phi})|_{P'} \cong R_{\Phi}.
\end{equation} Indeed, recall that the spin structures on $V_0$ and $V_1$ are related by the duality isomorphism induced by $\omega$, and the spin structure on $V_0$ is self-dual via $q$. From the isomorphisms $\det(V_0)^{1/2} \cong \det(V_0^*)^{1/2} \cong  \det(V^*_1)^{1/2},$ we get an isomorphism
$$  \det(V^*_0) \cong \Hom( \det(V^*_0)^{1/2}, \det(V^*_1)^{1/2}),$$
under which the sections of $P_{\Phi}$ and $R_{\Phi}$ correspond to each other. This proves the claim.

Combining \eqref{eq:P01}, \eqref{eq:P01'}, \eqref{eq:PUU}, \eqref{eq:RPhi} and \eqref{eq:PR}, we obtain
\begin{align*}
 \Pb_{L_0, L_1}|_{P'} &\cong i^*(\PVb_{U, f})|_{P'} \otimes_{\Z_2} Q_{P, U, f, i}|_{P'},\\
&\cong i^*(\Phi|_X^*(\PVb_{U', g})  \otimes_{\Z_2} P_{\Phi} )|_{P'} \otimes_{\Z_2} Q_{P, U, f, i}|_{P'},\\
&\cong j^*(\PVb_{U', g})   \otimes_{\Z_2} R_{\Phi}  \otimes_{\Z_2} Q_{P, U, f, i}|_{P'},\\
&   \cong j^*(\PVb_{U', g}) \otimes_{\Z_2} Q_{P', U', g, j} \\
& \cong   \Pb_{L_0', L_1'}|_{P'}.
\end{align*}

This is a local isomorphism between $\Pb_{L_0, L_1}$ and $\Pb_{L'_0, L'_1}$, defined on the open set $P'$. We can construct such isomorphisms canonically, near every $x \in X$, so that they agree on double overlaps. Using the descent property of perverse sheaves, Theorem~\ref{thm:descent} (a), we glue together the isomorphisms to obtain the desired global isomorphism.
\end{proof}

\section{Clean intersections} 
\label{sec:Clean}
In this section we study Bussi's perverse sheaf of vanishing cycles in the case where the Lagrangians intersect cleanly (in the sense of Definition~\ref{def:clean}).

We start by describing the local model for clean intersections.
\begin{lemma}
\label{lem:lmClean}
Let $M$ be a complex symplectic manifold, of complex dimension $2n$. Let $L_0$ and $L_1$ be complex Lagrangian submanifolds of $M$, and $x \in L_0 \cap L_1$ a point where they intersect cleanly, along a submanifold of complex dimension $k$. Then, there is a neighborhood $S$ of $x$ in $M$ and an isomorphism $h: S \to T^*U$, where $U$ is a neighborhood of $0$ in $\C^n$, such that $h(L_0 \cap S) = U$ and $h(L_1 \cap S)$ is the graph of $df$, where 
$$f: U \to \C, \ \ f(x_1, \dots, x_n) = x_{k+1}^2 + \dots +x_n^2.$$
\end{lemma}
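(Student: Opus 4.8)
The strategy is to upgrade the complex Lagrangian neighborhood theorem (Theorem~\ref{thm:CxLagNbhd}) by choosing the polarization carefully so that the second Lagrangian is put into a standard quadratic form. First I would invoke Theorem~\ref{thm:CxLagNbhd} applied to $L_0$: there is a neighborhood $S$ of $x$ and an isomorphism $h_0\colon (S,\omega)\to (T^*N,\omega_{\can})$ with $h_0(L_0\cap S)=N$, the zero section, where $N\subseteq\C^n$ is open and we may assume $x\mapsto 0$. Under this identification $L_1$ becomes a complex Lagrangian through $0$ in $T^*N$, and the clean-intersection hypothesis says that $L_1\cap N$ is, near $0$, a $k$-dimensional complex submanifold $Q$ with $T_0Q=T_0N\cap T_0L_1$. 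After a further linear change of coordinates on $N$ (a symplectomorphism of $T^*N$ covering a linear automorphism of $\C^n$) I may assume $T_0Q=\C^k\times 0\subset\C^n$, i.e. $Q$ is the graph over $\C^k$ of a holomorphic map vanishing to second order, and shrinking $S$, straightening $Q$ by a diffeomorphism of $N$ that fixes $0$ to first order, I may assume $Q=\C^k\times 0$ exactly (near $0$), still with $L_0=N$ the zero section.

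Next I would show $L_1$ is the graph of a closed holomorphic $1$-form on $N$, hence $=df$ for a holomorphic $f$ with $f(0)=0$ and $df(0)=0$ (since $x\in L_1$); this is the standard fact that a Lagrangian section of $T^*N$ near the zero section is $df$, using that $\omega_{\can}$-Lagrangian $\iff$ the defining $1$-form is closed, and $N$ is (after shrinking) simply connected. The condition $\Crit(f)\supseteq Q=\C^k\times 0$ forces the partial derivatives $\partial f/\partial x_i$ to vanish on $\C^k\times 0$ for all $i$; since $df(0)=0$ as well, $f$ has no linear part and its quadratic part $f_2$ depends only on $x_{k+1},\dots,x_n$. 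The clean-intersection condition $T_0(L_0\cap L_1)=T_0L_0\cap T_0L_1$ translates, via the identification of $T_0L_1$ with the graph of the Hessian $\operatorname{Hess}_0 f$, into the statement that this Hessian is non-degenerate in the $x_{k+1},\dots,x_n$ directions — i.e. $f_2$ is a non-degenerate quadratic form in $n-k$ variables. Then the holomorphic Morse lemma with parameters (the parameters being $x_1,\dots,x_k$, along which $f$ is identically critical, so that in fact $f$ is independent of them after the change) produces a local biholomorphism of $N$ fixing $0$ after which $f(x_1,\dots,x_n)=x_{k+1}^2+\dots+x_n^2$; composing the induced symplectomorphism of $T^*N$ with $h_0$ gives the desired $h$, and shrinking $N$ to a ball $U$ around $0$ finishes.

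The main obstacle I expect is the bookkeeping around the holomorphic Morse lemma \emph{with parameters}: one must check that the change of coordinates can be taken to depend holomorphically on $x_1,\dots,x_k$ and to leave the slice $\C^k\times 0$ (where $f\equiv 0$) pointwise fixed, so that the resulting map is an honest local biholomorphism of $N$ and the induced map on $T^*N$ is a symplectomorphism preserving the zero section; the splitting $f=0\oplus(\text{non-deg.\ quadratic})$ has to be arranged compatibly with the product structure $\C^k\times\C^{n-k}$. A clean way to handle this is to first note $f$ is independent of $x_1,\dots,x_k$ (since all its first partials vanish identically on $\C^k\times 0$ \emph{and} — using $\Crit f\supseteq \C^k\times 0$ together with the fact that a holomorphic function critical along a submanifold is constant in the normal directions only to first order — one argues more carefully, or simply restricts attention to a product neighborhood and applies the parametrized Morse lemma as in Greuel–Lossen–Shustin or Arnold–Gusein-Zade–Varchenko), reducing to the classical holomorphic Morse lemma in the $n-k$ variables $x_{k+1},\dots,x_n$, applied fibrewise over $\C^k$. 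Everything else — the linear normalizations, straightening $Q$, the Lagrangian-section argument — is routine and parallels the proof of Theorem~\ref{thm:CxLagNbhd}.
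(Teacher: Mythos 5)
Your route --- normalize $L_0$ via the Lagrangian neighborhood theorem, write $L_1$ as the graph of $df$, then normalize $f$ by a parametrized (Morse--Bott) holomorphic Morse lemma --- is a fleshed-out version of the paper's very terse proof (put the tangent spaces in linear normal form, then ``extend as in Darboux''), and correctly identifies the Morse--Bott lemma as the ingredient the paper elides. You are also right to distrust the intermediate claim that $f$ is independent of $x_1, \dots, x_k$: it is false in general (e.g.\ $f(x_1, x_2) = (1 + x_1)x_2^2$ has critical locus $\{x_2 = 0\}$ with non-degenerate transverse Hessian near $0$, yet depends on $x_1$), and the parametrized Morse lemma indeed handles this without that claim.

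There is, however, a gap earlier in the argument. After invoking Theorem~\ref{thm:CxLagNbhd} for $L_0$ alone, there is no reason for $L_1$ to be a graph over $N$: that theorem normalizes the pair $(\omega, L_0)$ and places no constraint on the position of $L_1$ relative to the fibers of $T^*N \to N$. Concretely, in Darboux coordinates $(q_1,q_2,p_1,p_2)$ on $T^*\C^2$ with $L_0 = \{p = 0\}$, the Lagrangian plane $T_0L_1 = \operatorname{span}\{\partial_{q_1}, \partial_{p_2}\}$ meets $T_0L_0$ in dimension $k = 1$ yet contains a vertical vector, so no Lagrangian tangent to it is locally a graph. The linear change of coordinates you then apply --- a symplectomorphism of $T^*N$ covering a linear automorphism of $N$ --- preserves the fibration and therefore cannot create transversality. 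The fix is to choose the polarization with $L_1$ in mind from the outset: invoke Theorem~\ref{thm:pol2} with $Q = L_0$ and a linear polarization $L_p \subset T_xM$ transverse to \emph{both} $T_xL_0$ and $T_xL_1$ (such Lagrangian complements form a dense open set), which produces an isomorphism to $T^*N$ in which $L_0$ is the zero section and the vertical fibers are transverse to $T_0L_1$. Equivalently, after a naive application of Theorem~\ref{thm:CxLagNbhd}, compose with a linear shear $(q,p) \mapsto (q + Bp, p)$, with $B$ symmetric, preserving the zero section and chosen so that $T_0L_1$ becomes transverse to the vertical. With that amendment the rest of your argument goes through.
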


\begin{proof}
Because of the clean intersection condition, we can find a linear isomorphism that takes $TM$ to $\C^{2n}$, the tangent space $TL_0 \subset TM$ to $\C^n \times \{0\}^n \subset \C^{2n}$, and $TL_1 \subset TM$ to the graph of $g: \C^n \to \C^n$, $g(x_1, \dots, x_n)=(0, \dots, 0, x_{k+1},\dots, x_n).$
We then extend this isomorphism to a local neighborhood, as in the proof of Darboux's theorem.
\end{proof}

We now turn to studying Bussi's perverse sheaf $\Pb_{L_0, L_1}$ over a clean intersection.

\begin{proposition}
\label{prop:BClean}
Let $M$ be a complex symplectic manifold, of complex dimension $2n$. Let $L_0$ and $L_1$ be complex Lagrangian submanifolds of $M$, equipped with spin structures. Let $Q \subset L_0 \cap L_1$ be a component of the intersection along which $L_0$ and $L_1$ meet cleanly. Denote by $k$ the complex dimension of $Q$. Then, the restriction of $\Pb_{L_0, L_1}$ to $Q$ is a local system on $Q$ with stalks isomorphic to $\Z[k]$.
\end{proposition}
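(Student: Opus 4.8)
The plan is to reduce to the local model from Lemma~\ref{lem:lmClean} and then compute the perverse sheaf of vanishing cycles in that model using Example~\ref{ex:PVclean}, tracking the $\Z_2$-twist from Bussi's construction along the way. First I would fix a point $x \in Q$ and apply Lemma~\ref{lem:lmClean} to obtain a neighborhood $S$ of $x$ in $M$, an identification $h : S \xrightarrow{\cong} T^*U$ with $U$ a neighborhood of $0$ in $\C^n$, sending $L_0 \cap S$ to the zero section $U$ and $L_1 \cap S$ to the graph of $df$, where $f(x_1,\dots,x_n) = x_{k+1}^2 + \dots + x_n^2$. In particular $\Crit(f) = \{x_{k+1} = \dots = x_n = 0\} \cong \C^k$, which matches $Q \cap S$ under $h$, so this data constitutes an $L_0$-chart $(P, U, f, i)$ in the sense of Definition~\ref{def:L0chart} (with $P = S \cap X$, and $i$ the induced isomorphism $P \to \Crit(f)$). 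By the defining property \eqref{eq:omegaL0} of Bussi's sheaf, we then have a natural isomorphism
$$ \Pb_{L_0, L_1}|_P \cong i^*(\PVb_{U, f}) \otimes_{\Z_2} Q_{P, U, f, i}. $$

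Next I would identify each of the two factors. By Example~\ref{ex:PVclean} (applied verbatim to this $f$), the perverse sheaf $\PVb_{U,f}$ is the constant sheaf $\Z_{\Crit(f)}[k]$; pulling back under the isomorphism $i$ gives $i^*(\PVb_{U,f}) \cong \Z_P[k]$, a degree-shifted constant sheaf on $P$. The bundle $Q_{P,U,f,i} \to P$ is a principal $\Z_2$-bundle — the bundle parametrizing square roots $\vartheta : K_{L_0}^{1/2}|_P \xrightarrow{\cong} K_{L_1}^{1/2}|_P$ with $\vartheta \otimes \vartheta = \Theta$, where $\Theta$ is the canonical-bundle isomorphism induced by the polarization. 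Tensoring $\Z_P[k]$ by the rank-one $\Z$-local system associated to this $\Z_2$-bundle yields a $\Z$-local system on $P$, placed in degree $-k$; its stalk at any point is $\Z[k]$ (i.e.\ $\Z$ concentrated in degree $-k$, matching the convention $\Z_{(0)}$ shifted appropriately). This shows $\Pb_{L_0, L_1}|_P$ is a (shifted) $\Z$-local system with the stated stalks on the neighborhood $P$ of $x$ inside $Q$.

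Finally, since $x \in Q$ was arbitrary and $Q$ is a single connected component of $L_0 \cap L_1$ along which the intersection is clean of dimension $k$, the local descriptions above — each a shifted rank-one $\Z$-local system — patch together. Concretely, on overlaps of two such charts the two expressions for $\Pb_{L_0,L_1}$ agree by the uniqueness in Bussi's gluing (Theorem~\ref{thm:descent}), so being a shifted rank-one local system is a local property that propagates over all of $Q$. Hence $\Pb_{L_0,L_1}|_Q$ is a local system on $Q$ with stalks $\Z[k]$, as claimed.

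The main obstacle — really the only substantive point — is verifying that the concrete function $f$ produced by Lemma~\ref{lem:lmClean} legitimately assembles into a bona fide $L_0$-chart: one must check that the polarization $T^*U \to U$ pulled back through $h$ is transverse to both $L_0$ and $L_1$ near $x$ (transversality to $L_1 = \mathrm{graph}(df)$ holds because the zero section is transverse to any section), that $h$ carries $P$ precisely onto $\Crit(f)$ as \emph{analytic spaces} (not merely as sets) so that the clean-intersection hypothesis is used exactly where it is needed, and that the spin structures on $L_0, L_1$ make the bundle $Q_{P,U,f,i}$ well-defined over $P$. Once this bookkeeping is in place, the computation of $\PVb_{U,f}$ is exactly Example~\ref{ex:PVclean} and the conclusion is immediate; no hard analysis is required beyond Bussi's already-cited machinery.
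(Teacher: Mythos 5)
Your proposal is correct and takes the same approach as the paper: reduce to the local model of Lemma~\ref{lem:lmClean}, apply the defining property \eqref{eq:omegaL0} with the $\PVb_{U,f}$ computation from Example~\ref{ex:PVclean}, and note that the $\Z_2$-twist by $Q_{P,U,f,i}$ only changes which rank-one local system appears. You have simply spelled out the bookkeeping (transversality of the induced polarization, the identification of analytic structures, and the final patching over $Q$) that the paper's two-sentence proof leaves implicit.
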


\begin{proof}
Using Lemma~\ref{lem:lmClean}, we can find an $L_0$-chart $(S, P, U, f, h, i)$ around any $x \in Q$ such that locally the function $f$ is as in Example~\ref{ex:PVclean}. Using the computation of $\PVb_{U,f}$ in that example, and the defining property \eqref{eq:omegaL0} of $\Pb_{L_0, L_1}$, the conclusion follows.
\end{proof}

Our next task is to develop tools for identifying the local system that we obtain from Proposition~\ref{prop:BClean}. 

Under the hypotheses of that proposition, observe that $Q$ is an isotropic submanifold, so we have an isomorphism:
\begin{equation}
  TM|_Q \cong T Q \oplus T^*Q \oplus  N_0 Q \oplus N_1 Q,
\end{equation}
where $N_iQ$ is the normal bundle to $Q$ in $L_i$. In fact, we can identify $T^*Q$ with a complex, but not necessarily holomorphic, isotropic sub-bundle of $TM|_Q$, transverse to $TQ \oplus N_0 Q \oplus N_1 Q.$ (There is a contractible set of choices for such a sub-bundle, just as in the Lagrangian case.)

The direct sum
\begin{equation}
N Q \cong   N_0 Q \oplus N_1 Q,
\end{equation}
is the \emph{symplectic normal bundle} of $Q$, and $N_i Q \ (i=0,1)$ form transverse Lagrangian sub-bundles of $NQ$.

Suppose that the complex bundle $NQ$ has a (not necessarily holomorphic) polarization, transverse to $N_0Q$ and $N_1Q$. This gives a decomposition
\begin{equation} \label{eq:linear_polarization_normal_intersection}
  NQ \cong N_0 Q \oplus N^*_0 Q.
\end{equation}
 The induced projection
\begin{equation} \label{eq:map_normal_bundles}
N_1 Q \to N_0 Q  
\end{equation}
is an isomorphism of complex vector bundles. We obtain a non-degenerate (complex) quadratic form $q$ on $N_0 Q$ such that the graph of $dq$ gives the inclusion
\begin{equation}
  N_1 Q  \subset N_0 Q \oplus N^*_0 Q.   
\end{equation}
By passing to the real part, we obtain a quadratic form on $N_0 Q$ of trivial signature. Let $W^+ \subset N_0 Q$ denote a maximal real sub-bundle on which this form is positive. The space of such sub-bundles is contractible, and therefore the isomorphism class of $W^+$ depends only on the quadratic form. Let $o(W^{+})$ be the $\Z_2$-principal bundle over $Q$ parametrizing orientations of $W^+$. Letting $\Z_2$ act on $\Z$ by $a \mapsto -a$, we define 
$$  |W^{+}| := o(W^+) \otimes_{\Z_2} \Z.$$ 
This is a $\Z$-local system over $Q$.

Observe also that by taking the direct sum of \eqref{eq:map_normal_bundles} with the identity on $TQ$, we obtain an isomorphism
\begin{equation} \label{eq:TLL}
TL_1|_Q \to TL_0|_Q.  
\end{equation}

\begin{lemma} \label{lem:vanish-cycl-desc-1}
If the projection \eqref{eq:TLL} preserves spin structures, we have a canonical isomorphism:
\begin{equation} \label{eq:compute_local_system_perverse}
 \Pb_{L_0,L_1}|_{Q}  \cong |W^{+}| [k].
\end{equation}
\end{lemma}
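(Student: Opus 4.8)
## Proof Proposal

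The plan is to reduce the global statement to a local computation, carried out in the Darboux-type chart produced by the clean-intersection normal form, and then to track carefully how the $\Z_2$-twisting bundle $Q_{P,U,f,i}$ in Bussi's construction interacts with the orientation bundle $o(W^+)$. First I would fix a point $x \in Q$ and choose an $L_0$-chart $(S,P,U,f,h,i)$ around $x$ as in the proof of Proposition~\ref{prop:BClean}: by Lemma~\ref{lem:lmClean} we may arrange $U \subset \C^n$, $h(L_0\cap S)=U$, $h(L_1\cap S)= \operatorname{graph}(df)$ with $f(x_1,\dots,x_n)=x_{k+1}^2+\dots+x_n^2$, so that $\Crit(f)=\C^k$ and, by Example~\ref{ex:PVclean}, $\PVb_{U,f}\cong \Z_{\C^k}[k]$. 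Via the isomorphism \eqref{eq:omegaL0} we then have $\Pb_{L_0,L_1}|_P \cong i^*(\Z_{\Crit(f)}[k]) \otimes_{\Z_2} Q_{P,U,f,i}$, so the claim \eqref{eq:compute_local_system_perverse} amounts to identifying the principal $\Z_2$-bundle $Q_{P,U,f,i}$ with $o(W^+)$ locally, compatibly on overlaps.

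The second and central step is that identification. Recall $Q_{P,U,f,i}$ parametrizes square roots $\vartheta \colon K_{L_0}^{1/2}|_P \to K_{L_1}^{1/2}|_P$ of the isomorphism $\Theta\colon K_{L_0}|_P \to K_{L_1}|_P$ induced by the chart's polarization; since by hypothesis the projection \eqref{eq:TLL} preserves spin structures, the ambiguity in choosing such a $\vartheta$ is measured precisely by the discrepancy between the chart's polarization of $NQ$ and the spin-structure-preserving identification of $N_1Q$ with $N_0Q$. Concretely, in the normal form the quadratic form $q$ on $N_0Q$ governing the inclusion $N_1Q \subset N_0Q \oplus N_0^*Q$ is exactly the form $\sum x_j^2$, whose real part has a canonical maximal positive sub-bundle, and the square-root $\vartheta$ is determined up to sign by a choice of orientation of that sub-bundle $W^+$. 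This is the standard ``$\det(q)$ has a sign'' phenomenon already used in Proposition~\ref{prop:stabL} (the paragraph relating $P_\Phi$, $R_\Phi$ and $\det(q)$): a non-degenerate complex quadratic form on a complex bundle, restricted to a maximal real positive sub-bundle, gives a well-defined orientation torsor, and the two square roots of $\det(q)$ correspond to the two orientations. I would make this precise by writing $K_{L_1}^{1/2} \cong K_{L_0}^{1/2} \otimes \det(N_0^*Q)^{1/2}$ (using the decomposition $TL_i|_Q = TQ \oplus N_iQ$, the identification $N_1Q\cong N_0^*Q$ from the symplectic form, and spin-structure compatibility), so that $\vartheta$ is a choice of square root of a section of $\det(N_0^*Q)$ pinned down by $q$; that data is a point of $o(W^+)$.

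Finally, I would globalize. The construction of a local isomorphism $Q_{P,U,f,i}\cong o(W^+)|_P$ above depends only on $q$ (equivalently on $W^+$), which is intrinsic to $Q$ and independent of the chart up to the contractible choices involved (polarization of $NQ$, choice of maximal positive sub-bundle); hence on double overlaps the two local identifications agree, and Theorem~\ref{thm:descent}(a) — applied to the perverse sheaves $\Pb_{L_0,L_1}|_Q$ and $|W^+|[k]$ — glues the local isomorphisms into the asserted canonical global isomorphism $\Pb_{L_0,L_1}|_Q \cong |W^+|[k]$. The main obstacle I anticipate is the bookkeeping in the second step: one must be scrupulous about which square roots live on $Q$ versus on $X^{\red}$, about the direction of the duality isomorphism $N_1Q\cong N_0^*Q$, and about the precise sense in which ``the projection \eqref{eq:TLL} preserves spin structures'' cancels the contributions of $TQ$ and of the ambient $T^*Q$ factor, leaving only the $\det(N_0^*Q)^{1/2}$-torsor — i.e.\ showing that no extra sign beyond $o(W^+)$ enters. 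Everything else is either the normal-form input (Lemma~\ref{lem:lmClean}, Example~\ref{ex:PVclean}) or formal descent.
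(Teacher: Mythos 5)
Your overall architecture is right -- reduce to a local $L_0$-chart via the clean normal form, identify the two $\Z_2$-torsor-twisted ingredients in Bussi's formula $\Pb_{L_0,L_1}|_P \cong i^*(\PVb_{U,f}) \otimes_{\Z_2} Q_{P,U,f,i}$, and glue by descent -- but you have attributed the orientation torsor $o(W^+)$ to the wrong factor, and this is a genuine gap rather than a cosmetic rephrasing.

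Specifically, you assert that Example~\ref{ex:PVclean} gives $\PVb_{U,f} \cong \Z_{\Crit(f)}[k]$ and then declare that ``the claim amounts to identifying the principal $\Z_2$-bundle $Q_{P,U,f,i}$ with $o(W^+)$.'' Both halves of this are off. The identification of $\PVb_{U,f}$ with the constant sheaf in Example~\ref{ex:PVclean} is \emph{not} canonical: the stalk at $x$ is the top reduced cohomology of the Milnor fiber $q^{-1}(\epsilon) \cong T^*S^{n-k-1}$, which is free of rank one but whose generator is pinned down exactly by a choice of orientation of the vanishing sphere, i.e.\ by a section of $o(W^+)$. That is precisely where the $|W^+|$ factor in the lemma's conclusion originates. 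Meanwhile, the hypothesis that the projection \eqref{eq:TLL} preserves spin structures does the opposite of what you claim: since $\Theta$ is by construction the determinant of the derivative of the polarization-induced local biholomorphism $L_1 \to L_0$, and that derivative along $Q$ \emph{is} the map \eqref{eq:TLL}, the spin-structure hypothesis hands you a preferred square root $\vartheta$ of $\Theta$, i.e.\ a canonical global section of $Q_{P,U,f,i}$, so that the twist by $Q_{P,U,f,i}$ is trivial. Your picture of a ``discrepancy between the chart's polarization of $NQ$ and the spin-structure-preserving identification of $N_1Q$ with $N_0Q$'' does not exist in this setup: the chart is built from the very polarization that produces the map \eqref{eq:map_normal_bundles}, so these are one and the same identification, and there is no further $\Z_2$ freedom left in $\vartheta$ once the spin hypothesis is imposed. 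Your two errors thus cancel and you land on the correct formula $|W^+|[k]$, but the reasoning -- in particular the claim that the square root $\vartheta$ is ``determined up to sign by a choice of orientation of $W^+$'' -- is incorrect. To repair the proof you should interchange the two attributions: compute $i^*(\PVb_{U,f})_x$ canonically as $|W^+|_x$ in degree $-k$ via the Milnor fiber of the quadratic form $q$ on $N_0Q$, and then observe that the spin hypothesis makes the tensor with $Q_{P,U,f,i}$ vacuous.
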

\begin{proof}
The given polarization of $NQ$ induces a polarization on $TM$, with kernel $T^*Q \oplus N_0^*Q$. Near every point $x \in Q$, we can apply Theorem~\ref{thm:pol1} (b) to obtain from this polarization an $L_0$-chart $(P, U, f, i)$, as explained in Section~\ref{sec:Bussi}. Recall that $ \Pb_{L_0,L_1}|_P$ is naturally isomorphic to
$$ i^*(\PVb_{U,f}) \otimes_{\Z_2} Q_{P, U, f, i}.$$
As in the proof of Proposition~\ref{prop:BClean}, we can choose $f$ to be a quadratic form on $N_0Q$. At $x$, this can be identified with the quadratic form on $T_xL_0 = T_xQ \oplus (N_0Q)_x$ that depends only on the $(N_0Q)_x$ coordinates, where it is given by the form $q$ coming from  \eqref{eq:map_normal_bundles}. Thus, the stalk of $i^*(\PVb_{U,f})$ at $x$ is canonically $H^{k-1}(q^{-1}(\epsilon))$ (shifted to be in degree $-k$) for a small $\epsilon \neq 0$. (Compare Example~\ref{ex:PVclean}.) The preimage $q^{-1}(\epsilon)$ is (non-canonically) diffeomorphic to $T^*S^{k-1}$, and an identification of $H^{k-1}(q^{-1}(\epsilon))$ with $\Z$ is the same as a choice of an orientation on $W^+$ at $x$, or of an identification of $|W^+|_x$ to $\Z$. Thus, we have a canonical isomorphism
$$i^*(\PVb_{U,f})_x \cong  |W^{+}|_x [k].$$
Moreover, because of the condition on spin structures, the bundle $Q_{P, U, f, i}$ has a canonical section, so tensoring it with it has no effect. The conclusion follows. 
\end{proof}

As a consequence of Lemma \ref{lem:vanish-cycl-desc-1}, to compute $\Pb_{L_0,L_1}$ one needs to find a polarization in the symplectic normal bundle of $Q$, in which the two spin structures and the quadratic form can be explicitly understood. An example of such a situation will appear in Lemma~\ref{lem:trivial} below.

\section{Three-manifold invariants}
\label{sec:Invariant}

In this section we construct the three-manifold invariants advertised in the Introduction, and prove Theorems~\ref{thm:main1} and \ref{thm:framed}.

\subsection{Definitions}
Let $Y$ be a closed, connected, oriented three-manifold. Suppose we are given a Heegaard splitting $Y = U_0 \cup_{\Sigma} U_1$ of genus $g \geq 3$. We equip $\CharIrr(\Sigma)$ with the complex structure $J$ and the complex symplectic form $\omegac=-\omega_1 + i \omega_3$, as in Section~\ref{sec:surfaces}. Let $$L_0, L_1 \subset \CharIrr(\Sigma)$$ be the complex Lagrangians constructed in Section~\ref{sec:Lags}. By Lemma~\ref{lem:idab}, the intersection $X=L_0 \cap L_1$ can be identified with $\CharIrr(Y)$. Further, each $L_i$ is diffeomorphic to $\CharIrr(F_g)$, where $F_g$ is the free group on $g$ elements. By Lemma~\ref{lem:pifree}, we have $H^2(L_i; \Z/2)=0$, so $c_1(TL_i)=0$ and hence $L_i$ admits a spin structure, which is unique because $H^1(L_i; \Z/2) = 0$; cf. Fact~\ref{fact:spin}. Applying Bussi's work described in Section~\ref{sec:Bussi}, we obtain a perverse sheaf
$$P^{\bullet}(Y):=P^{\bullet}_{L_0, L_1} \in \Perv(\CharIrr(Y)).$$

We can do a framed version of this construction, using the complex Lagrangians $$L_0^{\#} , L_1^{\#} \subset  \Char_{\tw}(\Sigma^{\#})$$
constructed in Section~\ref{sec:LagTw}. In this case we can use a Heegaard splitting of any genus $g \geq 0$, and we need to pick a basepoint $z \in \Sigma \subset Y$. The Lagrangians $L_0^{\#} , L_1^{\#} $ are diffeomorphic to products of $g$ copies of $G \cong S^3 \times \R^3$, so they too have unique spin structures. We let
$$ P^{\bullet}_\#(Y,z):=P^{\bullet}_{L_0^\#, L_1^\#} \in \Perv(\Rep(Y)).$$

When relating the perverse sheaves coming from different Heegaard splittings, we may encounter nontrivial self-diffeomorphisms of $Y$, which in turn give nontrivial automorphisms (self-biholomorphisms) of $\CharIrr(Y)$ and $\Rep(Y)$. Therefore, it will be helpful to work in the following variant of the category of perverse sheaves.

\begin{definition}
\label{def:pervp}
If $X$ is a complex analytic space, we let $\Perv'(X)$ be the category whose objects are the same as in $\Perv(X)$, and whose morphisms are defined as follows. If $\Cat^\bullet$ and $\D^\bullet$ are perverse sheaves on $X$, a morphism from $\Cat^\bullet$ and $\D^\bullet$ in $\Perv'(X)$ is a pair $(f, \phi)$, where $f: X \to X$ is an automorphism and $\phi: \Cat^{\bullet} \to f^*\D^\bullet$ is a morphism in $\Perv(X)$. Composition of morphisms is given by
$$ (f, \phi) \circ (g, \psi) := (f \circ g, g^*\phi \circ \psi).$$
\end{definition}

We will prove below that $P^{\bullet}(Y)$ is a natural invariant of $Y$ in the category $\Perv'(\CharIrr(Y))$, and $ P^{\bullet}_\#(Y,z)$  is a natural invariant of the pair $(Y,z)$ in the category $\Perv'(\Rep(Y))$. By taking hypercohomology, we will then obtain invariants
$$ \HP^*(Y),  \ \ \HPf^*(Y,z),$$
as noted in the Introduction.

\subsection{Stabilization invariance}
\label{sec:stab}
When describing Heegaard splittings of three-manifolds, it will be convenient to use Heegaard diagrams, as in Heegaard Floer theory \cite{HolDisk}. Specifically, we represent the handlebody $U_0$ with boundary $\Sigma$ by a collection of $g$ disjoint simple closed curves $\alpha_1, \dots, \alpha_g$ on $\Sigma$, homologically independent in $H_1(\Sigma)$, such that $U_0$ is obtained from $\Sigma$ by attaching disks with boundaries $\alpha_i$, and then attaching a three-ball. Similarly, we represent $U_1$ by another collection of curves, denoted $\beta_1, \dots, \beta_g$. The data
$$ (\Sigma, \alpha_1, \dots, \alpha_g, \beta_1, \dots, \beta_g)$$
is a {\em Heegaard diagram}. 

Note that our constructions of $P^{\bullet}(Y)$ and $P^{\bullet}_\#(Y,z)$ start directly from a Heegaard splitting, not a Heegaard diagram. Thus, unlike in Heegaard Floer theory, to prove invariance there will be no need to consider moves that change the Heegaard diagram but leave the splitting fixed. (These Heegaard moves are the handleslides and curve isotopies, considered in \cite{HolDisk}.) For us, Heegaard diagrams will be just a way of representing Heegaard splittings pictorially, as in Figures~\ref{fig:stabilization}, \ref{fig:handleswap}, and \ref{fig:handleswap2} below.

The one Heegaard move that we have to consider is {\em stabilization}. This consists in drilling out a solid torus from one of the handlebodies, say $U_1$, such that a part of its boundary (a disk $D$) is on $\Sigma$, and then attaching the solid torus to $U_0$. In this way, from the Heegaard splitting $(\Sigma, U_0, U_1)$ of genus $g$ we obtain a new Heegaard splitting $(\Sigma', U_0', U_1')$ of genus $g+1$, for the same three-manifold $Y$. In terms of Heegaard diagrams, we have introduced two new curves $\alpha'$ and $\beta'$, intersecting transversely in one point. See Figure~\ref{fig:stabilization}.

\begin {figure}
\begin {center}
\input{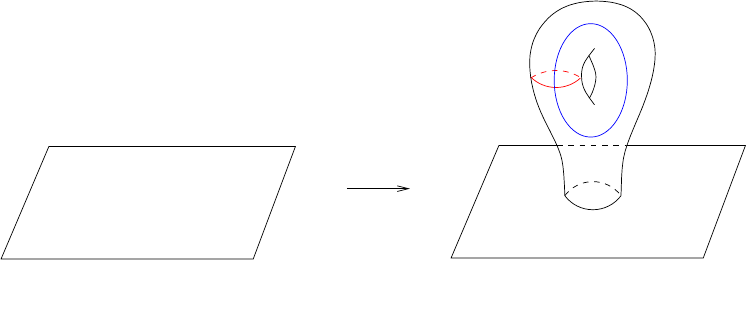_t}
\caption {Stabilization.}
\label{fig:stabilization}
\end {center}
\end {figure}

The inverse move to a stabilization is called {\em destabilization}.

\begin{theorem}[Reidemeister \cite{Reidemeister}, Singer \cite{Singer}]
\label{thm:RS}
Given a three-manifold $Y$, any two Heegaard splittings for $Y$ are  related by a sequence of stabilizations and destabilizations.
\end{theorem}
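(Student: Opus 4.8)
The plan is to reduce the statement to Cerf theory: convert Heegaard splittings into Morse-theoretic data, interpolate between two of them by a generic path of functions, and then read off stabilizations and destabilizations from the degenerations of that path. (This is a classical result; I sketch the modern Cerf-theoretic argument organizing the proof, the original arguments of \cite{Reidemeister, Singer} being more combinatorial.)

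First I would set up the dictionary. A genus-$g$ Heegaard splitting $Y = U_0 \cup_\Sigma U_1$ is the same datum, up to ambient isotopy, as a self-indexing Morse function $f \colon Y \to [0,3]$ with a single critical point of index $0$, exactly $g$ critical points of index $1$ (all with value $<3/2$), exactly $g$ of index $2$ (all with value $>3/2$), and a single critical point of index $3$; one recovers the splitting by setting $\Sigma = f^{-1}(3/2)$, $U_0 = f^{-1}([0,3/2])$, $U_1 = f^{-1}([3/2,3])$. A smooth path through functions of this type yields an ambient isotopy between the corresponding splittings, and conversely. A stabilization is exactly the insertion of a cancelling pair of critical points of indices $1$ and $2$ whose critical values straddle $3/2$ (so that the new level set is $\Sigma \# T^2$); a destabilization is the reverse. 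I would also record the standard fact that two ordered Morse functions giving the \emph{same} Heegaard splitting differ by a finite sequence of handleslides and isotopies.

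Next, given two Heegaard splittings with associated functions $f_0, f_1$, I would pick a generic smooth path $(f_t)_{t \in [0,1]}$ from $f_0$ to $f_1$ in $C^\infty(Y)$. By Cerf's theorem, after a small perturbation this is a \emph{Cerf path}: for all but finitely many $t$, $f_t$ is excellent (Morse with distinct critical values), and the exceptional times are of precisely two kinds --- a birth or death of a cancelling pair of critical points of adjacent indices, or a transverse crossing of two critical values. The remaining work is to normalize the path so that, outside the exceptional times, $f_t$ stays of the special form above, and then to match each exceptional time to a topological move: a birth or death of an index-$(1,2)$ pair, arranged by a reparametrization of the target to occur at the level $3/2$, is a (de)stabilization; a birth or death of an index-$(0,1)$ or $(2,3)$ pair is cancelled against the existing extremum, so that a single minimum and maximum persist; and a crossing of critical values of two critical points of the \emph{same} index is a handleslide, which alters the Heegaard diagram but not the isotopy class of the splitting (the subsets $f_t^{-1}([0,3/2])$ and $f_t^{-1}([3/2,3])$ are unchanged up to isotopy). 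Composing these in order then presents the splitting of $f_1$ as obtained from that of $f_0$ by finitely many stabilizations, destabilizations and isotopies.

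The main obstacle is exactly this normalization step. A generic path will in general try to push an index-$1$ critical value up through the level $3/2$ (i.e.\ above an index-$2$ critical value), and will create spurious local minima and maxima; one must show the path can always be chosen, or locally repaired, to avoid or absorb all of these. This is the genuinely Cerf-theoretic input: that between two ordered Morse functions one may select the connecting generic path to stay inside the space of \emph{ordered} functions away from its birth-death points, equivalently that every ``wrong-way'' adjacent-index crossing can be traded for a handleslide together with a birth or death. I would carry this out by induction on the number of wrong-way crossings, at each stage invoking the cancellation lemma to remove one such crossing at the cost of a (de)stabilization and handleslides, or else quote the detailed treatments of Cerf theory and the Reidemeister--Singer theorem available in the literature.
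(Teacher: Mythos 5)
The paper does not prove this theorem at all: it is stated as a classical result and cited to Reidemeister and Singer, with no argument given. So there is no "paper's own proof" to compare against; what you have written is a supplementary sketch.

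As such a sketch, your Cerf-theoretic outline is the standard modern route and it is essentially sound. The dictionary between genus-$g$ Heegaard splittings and self-indexing Morse functions with one minimum, one maximum, and $g$ critical points each of index $1$ and $2$ is correct, as is the identification of a $(1,2)$ birth/death straddling the middle level with a (de)stabilization and of same-index critical-value crossings with handleslides (which do not change the isotopy class of the splitting). You correctly isolate the real content, namely the normalization of a generic Cerf path so that it stays within the space of ordered functions away from its birth-death singularities, absorbing spurious $(0,1)$ and $(2,3)$ pairs and trading wrong-way adjacent-index crossings for handleslides plus (de)stabilizations. That step is where the actual work lives, and your proposal quite openly defers it to the cancellation lemma or to the literature rather than carrying it out; so this is a correct outline with the expected gap of a sketch, not a complete proof. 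You also correctly note that the original arguments of Reidemeister and Singer (the sources the paper actually cites) were combinatorial, working with triangulations and common subdivisions rather than with Cerf theory; either route establishes the theorem, and the Cerf-theoretic one you chose is the one more commonly presented today.
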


\begin{remark}
\label{rem:ambient}
We view all our Heegaard surfaces not just as abstract surfaces, but as submanifolds of $Y$.  Changing the Heegaard surface by an ambient isotopy in $Y$ could be considered another Heegaard move (relating different Heegaard splittings). However, note that we can obtain a small ambient isotopy by composing a stabilization (perfomed from a disk $D \subset \Sigma$) with a destabilization that collapses the solid torus to a new disk $D'$, a slight deformation of $D$. Therefore, every ambient isotopy is a composition of  stabilizations and destabilizations.  

Even more generally, we could consider diffeomorphisms $f: Y \to Y$ that take a Heegaard splitting to another one, and are not necessarily isotopic to identity. Once again, these are not necessary if we  want to relate different Heegaard splittings of $Y$.
\end{remark}

In view of Theorem~\ref{thm:RS}, in order to prove that the isomorphism classes of $P^{\bullet}(Y)$ and $P^{\bullet}_\#(Y,z)$ are invariants of $Y$, resp. $(Y, z)$, it suffices to consider the effect of stabilizations.

\begin{proposition}
\label{prop:stab}
Let $(\Sigma, U_0, U_1)$ be a Heegaard splitting for $Y$, and $(\Sigma', U_0', U_1')$ be obtained from it by a stabilization. Let $L_0, L_1  \subset \CharIrr(\Sigma)$ and $L_0', L_1' \subset \CharIrr(\Sigma')$ be the complex Lagrangians constructed from each Heegaard splitting as in Section~\ref{sec:Lags}, and $L_0^{\#} , L_1^{\#} \subset  \Char_{\tw}(\Sigma^{\#})$, ${L'_0}^{\#} , {L'_1}^{\#} \subset  \Char_{\tw}({\Sigma'}^{\#})$ be those constructed as in Section~\ref{sec:LagTw}.

Then, the stabilization move induces isomorphisms
$$ \Stab: \Pb_{L_0, L_1} \xrightarrow{\phantom{b}\cong\phantom{b}}  \Pb_{L_0', L_1'} \ \text{in } \Perv(\CharIrr(Y))$$
and
$$ \Stab^{\#}:  \Pb_{L^{\#}_0, L^{\#}_1}  \xrightarrow{\phantom{b}\cong\phantom{b}}  \Pb_{{L_0'}^{\#}, {L_1'}^{\#}} \ \text{in } \Perv(\Rep(Y)).$$
\end{proposition}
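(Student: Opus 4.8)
The plan is to deduce both isomorphisms from the abstract stabilization result, Proposition~\ref{prop:stabL}, by exhibiting in our geometric situation exactly the data it requires: a complex symplectic embedding $M\hookrightarrow M'$, inclusions $L_i\subset L_i'$ with $L_0\cap L_1=L_0'\cap L_1'$, a holomorphic Lagrangian splitting $N=V_0\oplus V_1$ of the symplectic normal bundle with the prescribed tangent-space compatibilities, compatible spin structures, and a non-degenerate quadratic form $q$ on $V_0$. I will carry this out for the unframed pair $L_0,L_1\subset\CharIrr(\Sigma)$ and $L_0',L_1'\subset\CharIrr(\Sigma')$; the framed case is word-for-word the same once one observes that stabilizing away from the basepoint $z$ yields ${\Sigma'}^{\#}=\Sigma^{\#}\#T^2$ with the framing handle and the stabilization handle supported on disjoint disks, so that the argument below (with $\CharIrr(\Sigma)$ replaced by $\Xtw(\Sigma^{\#})$, the $L_i$ replaced by $\Rep(U_i)$, and $X$ replaced by $\Rep(Y)$) applies verbatim.

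First I would construct the ambient embedding. Choose standard generators $a_1,b_1,\dots,a_{g+1},b_{g+1}$ of $\pi_1(\Sigma')$ with $\prod_{i=1}^{g+1}[A_i,B_i]=1$, so that (after adjusting the generators if necessary) the two stabilization curves are $a_{g+1}$ and $b_{g+1}$, with $a_{g+1}$ null-homotopic in $U_0'$ and $b_{g+1}$ null-homotopic in $U_1'$. Sending $\rho\colon\pi_1(\Sigma)\to G$ to the representation $\rho'$ that agrees with $\rho$ on the old generators and is trivial on $a_{g+1},b_{g+1}$ defines a closed embedding $\Phi\colon\CharIrr(\Sigma)\hookrightarrow\CharIrr(\Sigma')$. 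Since $\rho'$ is trivial on the handle, its adjoint action there is trivial and the relation imposes no first-order condition on the handle, so at $\rho'$ one gets a canonical splitting $H^1(\Sigma';\Ad\rho')\cong H^1(\Sigma;\Ad\rho)\oplus\g_{a_{g+1}}\oplus\g_{b_{g+1}}$, where $\g_{a_{g+1}},\g_{b_{g+1}}$ denote the two summands recording the value of a cocycle on $a_{g+1}$ and on $b_{g+1}$. Goldman's cup-product formula for $\omegac$ shows that the form is unchanged on classes coming from $\Sigma$, so $\Phi$ is symplectic, and that on the handle (at the trivial connection) $\omegac$ restricts to $\tr\bigl(\xi(a_{g+1})\eta(b_{g+1})-\xi(b_{g+1})\eta(a_{g+1})\bigr)$. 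Hence the symplectic normal bundle $N$ is the globally trivial holomorphic bundle $\g\oplus\g$, and setting $V_0:=\g_{b_{g+1}}$, $V_1:=\g_{a_{g+1}}$ gives a holomorphic Lagrangian splitting $N=V_0\oplus V_1$ with $V_1\cong V_0^{*}$ via $\omegac$ and the non-degenerate trace form on $\mathfrak{sl}(2,\C)$.

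Next I would match the Lagrangians. Because $a_{g+1}$ bounds a disk in $U_0'$ and $b_{g+1}$ bounds a disk in $U_1'$, extending a representation trivially over the handle gives $L_0\subset L_0'$ and $L_1\subset L_1'$; differentiating the defining equations and using the splitting above yields exactly $TL_0'|_{L_0}=TL_0\oplus V_0\oplus 0$ and $TL_1'|_{L_1}=TL_1\oplus 0\oplus V_1$. Moreover $L_0\cap L_1$ and $L_0'\cap L_1'$ are, compatibly with $\Phi$, the same complex analytic space, namely $\CharsIrr(Y)$ for the \emph{same} three-manifold $Y$ (Seifert--van Kampen, together with the identification of the Lagrangian intersection with the character scheme used in Section~\ref{sec:Motivation}). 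The spin-structure hypotheses are automatic: each of $L_0,L_1,L_0',L_1'$ carries a unique spin structure (by Lemma~\ref{lem:pifree}, and in the framed case because the $L_i^{\#}$ are products of copies of $G\cong S^3\times\R^3$), and the trivial bundles $V_0,V_1\cong\g$ carry unique spin structures, which are therefore preserved by every relevant isomorphism and self-dual under any non-degenerate form. Finally I take $q$ to be the canonical invariant form $q(v)=\tr(v^2)$ on $V_0\cong\mathfrak{sl}(2,\C)$; non-degeneracy is all that Proposition~\ref{prop:stabL} needs. With $V_0,V_1,q$ and the spin structures so chosen, every hypothesis of Proposition~\ref{prop:stabL} holds, and it produces the required isomorphism $\Stab\colon\Pb_{L_0,L_1}\xrightarrow{\ \cong\ }\Pb_{L_0',L_1'}$ in $\Perv(\CharIrr(Y))$; running the identical argument in the twisted setting gives $\Stab^{\#}$.

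The step I expect to be the main obstacle is the explicit analysis of the symplectic normal bundle: verifying that $N$ is \emph{globally} $\g\oplus\g$ with trivial adjoint action, that $\Phi$ is genuinely a symplectic (not merely complex) embedding, and that $V_0\oplus V_1$ is a Lagrangian splitting realizing the stated tangent-space identities over all of $L_0$ and $L_1$ — this is precisely where Goldman's cup-product description of $\omegac$ on the stabilization handle must be made precise. A secondary point requiring care is that the scheme (analytic) structures on $L_0\cap L_1$ and on $L_0'\cap L_1'$ both coincide with $\CharsIrr(Y)$, so that Proposition~\ref{prop:stabL} applies on the nose rather than merely up to passing to reduced spaces.
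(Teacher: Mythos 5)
Your proof follows the same overall strategy as the paper's: reduce to the abstract stabilization result, Proposition~\ref{prop:stabL}, by exhibiting the symplectic embedding $\Phi\colon\CharIrr(\Sigma)\hookrightarrow\CharIrr(\Sigma')$, a Lagrangian splitting $N=V_0\oplus V_1$ of the symplectic normal bundle, the tangent-space compatibilities, the spin-structure compatibilities, and the quadratic form $q$ from the Killing form. The one genuine difference in route is how $V_0,V_1$ are produced: you work directly with the cocycle decomposition $H^1(\Sigma';\Ad\rho')\cong H^1(\Sigma;\Ad\rho)\oplus\g_{a_{g+1}}\oplus\g_{b_{g+1}}$ and Goldman's cup-product formula, whereas the paper realizes $V_i$ as the symplectic complement to $TM$ inside the tangent bundle of the coisotropic $C_i=\CharIrr(Z_i)$ attached to a compression body $Z_i$ sitting between $\Sigma$ and $\Sigma'$. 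The two descriptions land on the same subbundles (your $\g_{b_{g+1}}$, $\g_{a_{g+1}}$ are the paper's spans of the Poincar\'e duals to $\alpha'$ and $\beta'$), and your version is arguably more explicit; the paper's coisotropic picture is more coordinate-free and ties the argument to the geometry of compression bodies.

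There is, however, a gap in the spin-structure step. You assert that ``the trivial bundles $V_0,V_1\cong\g$ carry unique spin structures.'' Triviality only guarantees the \emph{existence} of a spin structure (and provides a canonical one from the trivialization); by Fact~\ref{fact:spin}(b), the set of spin structures is a torsor over $H^1(M;\Z/2)$, where $M=\CharIrr(\Sigma)$ is the base over which $V_0,V_1$ live, so uniqueness requires $H^1(M;\Z/2)=0$. This is precisely the extra input the paper supplies: it establishes that $M=\CharIrr(\Sigma)$ is simply connected (by a Morse-theoretic argument imitating Hitchin's proof for $\Xtw(\Sigma)$), hence $H^1(M;\Z/2)=0$, and only then are the various compatibility conditions---the matching of spin structures across $TL_i'|_{L_i}=TL_i\oplus V_i$, the $\omega$-duality between $V_0$ and $V_1$, and the $q$-self-duality of $V_0$---automatic. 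Without invoking this simple connectivity (or else directly checking that the canonical spin structure from the trivialization is respected by all the relevant isomorphisms), the hypotheses of Proposition~\ref{prop:stabL} are not verified. The same remark applies in the framed case, where one needs $\Xtw(\Sigma^{\#})$ simply connected; your appeal to the $L_i^{\#}$ being products of copies of $G$ handles the Lagrangians but not the bundles $V_0,V_1$ over the ambient space.
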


\begin{proof}
 To construct $\Stab$, we apply Proposition~\ref{prop:stabL}. We take $M=\CharIrr(\Sigma)$ and $M' = \CharIrr(\Sigma')$. There is a projection
$$ \pi_1(\Sigma') \cong \pi_1(\Sigma \setminus D) *_{\pi_1(\del D)} \pi_1(T^2 \setminus D) \longrightarrow \pi_1(\Sigma)$$
given by sending the generators of $\pi_1(T^2 \setminus D)$ to $1$. This induces an inclusion $\Phi: M \hookrightarrow M'$. If we describe $M$ in terms of the holonomies of flat connections, as in \eqref{eq:CS}, and do the same for $M'$, with the holonomies around $\alpha'$ and $\beta'$ being $A'$ and $B'$, then $M \subset M'$ is given by the equations
$$ A' = B' = I.$$
(Note that to define the holonomies $A'$ and $B'$, we need to choose a basepoint on the respective curves and an identification of the fiber at that point with $\C^2$. However, the condition that a holonomy is trivial is invariant under conjugation, and hence independent of those choices.)

Let us push $\Sigma$ slightly inside $U_0$ and consider the compression body $Z_0$ situated between this new copy of $\Sigma$ and $\Sigma'$.  Then $\pi_1(Z_0) \cong \pi_1(\Sigma) * \langle \beta' \rangle.$ Let us denote by $C_0 \subset \CharIrr(Z_0)$ the space of representations of $\pi_1(Z_0)$ whose restriction to $\pi_1(\Sigma)$ is irreducible. An argument similar to that in the proof of Lemma~\ref{lem:idab}(b) shows that $C_0$ is a coisotropic complex submanifold of $M'=\CharIrr(\Sigma')$. We have inclusions
$$ M \subset C_0 \subset M'$$
with $C_0$ being given by the equation $A'=I$. Observe also that $L_0 = L_0' \cap C_0 \subset M,$ and that we have an isomorphism
$$ C_0 \cong \RepIrr(\Sigma) \times_G G,$$
where $G$ acts on itself by conjugation. From here we see that  $C_0$ is a $G$-bundle over $M= \RepIrr(\Sigma)/G$. (This is not a principal bundle.) The $G$-bundle comes with a canonical section 
$$ M \to C_0, \ \ [\rho] \mapsto [(\rho, 1)],$$
which gives the inclusion $M \subset C_0$ mentioned above. Note that the tangent bundle to $C_0$ at a point $[(\rho, 1)] \in M$ is $T(\RepIrr(\Sigma) \times \g)/\g,$ where the denominator $\g$ is the tangent bundle to the orbit of $(\rho, 1)$. This orbit lies in $\RepIrr(\Sigma) \times \{1\}$, and therefore 
we can identify $TC_0|_M$ with $TM \times \g$.

Thus, if we let $V_0$ be the symplectic complement to $TM$ inside $TC_0|_{M}$, then $V_0$ is isomorphic to the trivial $\g$-bundle over $M$.  The Killing form on $\g$ gives a non-degenerate holomorphic quadratic form $q \in H^0(\Sym^2(V_0^*))$.

Let us also consider a compression body $Z_1$ between $\Sigma$ and $\Sigma'$, obtained by compressing $\beta'$ instead of $\alpha'$. This gives rise to another coisotropic $C_1 \subset M'$, determined by the equation $B'=I$. We have $M \subset C_1$ and $L_1 = L_1' \cap C_1$. We let $V_1$ be the symplectic complement to $TM$ inside $TC_1$.  Clearly, we have
$$ TL_0'|_{L_0} = TL_0 \oplus V_0|_{L_0} \oplus 0 \subset TM|_{L_0} \oplus V_0|_{L_0} \oplus V_1|_{L_0} =TM'|_{L_0}.$$

In fact, we can naturally identify the normal bundle $N_{MM'}$ with the trivial bundle with fiber $H^1(T^2; \g)$, where $T^2$ is the torus introduced in the stabilization. Then, $V_0 \subset N_{MM'}$ is spanned by the Poincar\'e dual to $\alpha'$, and $V_1$ by the Poincar\'e dual to $\beta'$.

The bundles $V_0$ and $V_1$ are trivial, so they admit spin structures. Further, these spin structures are unique, by Fact~\ref{fact:spin}(b), because the base space $M$ is simply connected, and therefore $H^1(M; \Z/2)=0$. To see that $M=\CharIrr(\Sigma)$ is simply connected, one can imitate the Morse-theoretic proof given by Hitchin in \cite[Theorem 9.20]{Hitchin} for the space $\Xtw(\Sigma)$; compare \cite[Section 4]{DWW}. 

Since the spin structures on $V_0$ and $V_1$ are unique, they correspond to each other under the duality induced by $\omega$. Furthermore, the spin structure on $V_0$ is self-dual under the isomorphism induced by $q$. Recall also that the Lagrangians $L_i$ and $L_i'$ have unique spin structures, so these must be compatible with the ones on $V_0$ and $V_1$.

We conclude that the hypotheses of Proposition~\ref{prop:stabL} are satisfied. We let $\Stab$ be the resulting isomorphism.

The isomorphism $\Stab^{\#}$ is constructed in a similar manner. The role of $Z_0$ is played by a compression body $Z_0^{\#}$ between $\Sigma^{\#}$ and ${\Sigma'}^{\#}$, and we use the coisotropic submanifold $C^{\#}_0 = \Xtw(Z^{\#}_0) \subset \Xtw(\Sigma^{\#})$.
\end{proof}

\subsection{Naturality}
\label{sec:naturality}
Proposition~\ref{prop:stab}, combined with Theorem~\ref{thm:RS}, shows that $P^{\bullet}(Y)$ and $P^{\bullet}_\#(Y)$ are invariants of $Y$ up to isomorphism. To complete the proofs of Theorems~\ref{thm:main1} and \ref{thm:framed}, we still have to show that they are {\em natural} invariants, i.e., that the isomorphisms can be chosen canonically. Specifically, given two Heegaard splittings of $Y$, we can relate them by a sequence of moves, and thus get an isomorphism between the objects constructed from each Heegaard splitting. The naturality claim is that this isomorphism does not depend on the chosen sequence of moves. (For the framed invariant $P^{\bullet}_\#(Y)$, we expect dependence on the basepoint $z$, so we will only consider moves that keep $z$ fixed.)

Naturality for three-manifold invariants defined from Heegaard diagrams was studied by Juh\'asz, D.Thurston and Zemke in \cite{JuhaszThurston}, where they applied it to Heegaard Floer homology. Theorem 2.39 in \cite{JuhaszThurston} gives a finite list of conditions that need to be checked to ensure naturality; see \cite[Definitions 2.30 and 2.33]{JuhaszThurston}. In our context, the invariants are constructed directly from a Heegaard splitting, so the list is shorter. Indeed, we can view invariants defined from a Heegaard splitting as being defined from a Heegaard diagram, with the $\alpha$-equivalence and $\beta$-equivalence moves from \cite{JuhaszThurston} inducing the identity. Thus, for our purposes, we will only consider the following Heegaard moves: stabilizations, destabilizations, and diffeomorphisms. Diffeomorphisms are not strictly necessary, cf. Remark~\ref{rem:ambient}. However, we will include them to keep the statements cleaner and more in line with  \cite{JuhaszThurston}. We will write a diffeomorphism $f: Y \to Y$ that takes a Heegaard splitting $\cH$ to another one $\cH'$ as $f: \cH \to \cH'$. A particular role will be played by diffeomorphisms that are isotopic to the identity in $Y$.

Before stating the naturality result, let us recall the notion of {\em simple handleswap}, which plays an essential role in \cite{JuhaszThurston}. Let $\cH=(\Sigma, U_0, U_1)$ be a Heegaard splitting. Let $D', D'' \subset \Sigma$ be the disks bounded by the curves $c'$ and $c''$ shown in Figure~\ref{fig:handleswap}. By adding the handle $H'$ to $U_1$, we get a new Heegaard splitting $\cH'= (\Sigma', U_0', U_1')$. We view this operation as the composition
$$ e = e_{\st} \circ e_{\iso} : \cH \to \cH',$$
where $e_{\iso}$ is a small isotopy given by pushing $D'$ slightly into $U_1$, to get a new disk bounded by $c'$, and $e_{\st}$ is the stabilization given by attaching a solid torus (the union of the handle $H'$ with the region $R$ between $D'$ and the new disk) to $U_1 \setminus R$. In a similar manner, we add a handle $H''$ to $U_0'$ to get the splitting $\cH'' =(\Sigma'', U_0'', U_1'')$.  The operation
$$ f = f_{\st} \circ f_{\iso} : \cH' \to \cH''$$
is the composition of an isotopy $f_{\iso}$ (pushing $D''$ into $U_0$) and a stabilization $f_{\st}$.

\begin {figure}
\begin {center}
\input{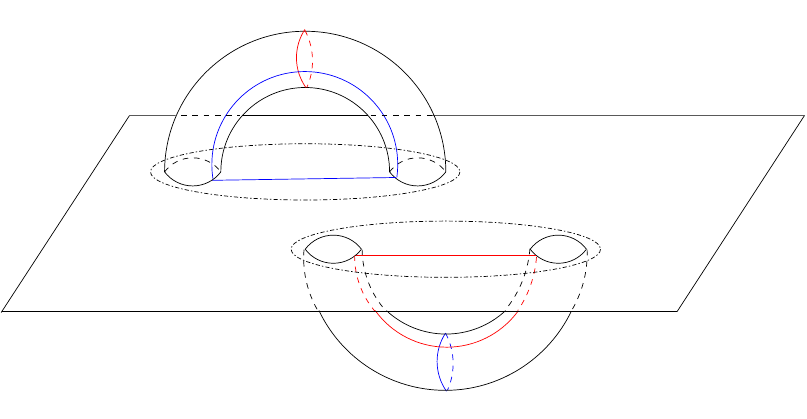_t}
\caption {We draw a part of the surface $\Sigma$ of genus $h$ as the middle plane (without the handles), and $U_0$ and $U_1$ as the lower and upper half-space, respectively. We drill a handle $H'$ into $U_0$ to obtain a Heegaard decomposition $(\Sigma', U_0', U_1')$, of genus $h+1$. Then we add a handle $H''$ to $U_0'$ as shown, and we obtain a new Heegaard decomposition $(\Sigma'', U_0'', U_1'')$, of genus $h+2$. }
\label{fig:handleswap}
\end {center}
\end {figure}

Now, on the surface $\Sigma$, we consider the diffeomorphism 
$$g = \tau_{\gamma} \circ \tau_{\gamma'}^{-1} \circ \tau_{\gamma''}^{-1} : \Sigma \to \Sigma$$  
given by the composition of a right-handed Dehn twist along the curve $\gamma$ and left-handed Dehn twists along the curves $\gamma'$ and $\gamma''$ shown in Figure~\ref{fig:handleswap1}. This maps the curves $\alpha'$ to $\hat{\alpha}'$ and $\beta''$ to $\hat{\beta}''$.

\begin {figure}
\begin {center}
\input{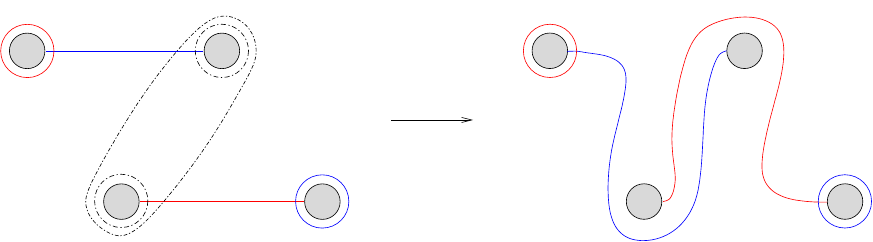_t}
\caption {On the left we drew the part of the surface $\Sigma$ from Figure~\ref{fig:handleswap}, with the gray circles being the feet of the respective handles. On the right we drew the effect of the diffeomorphism $g$ on the given curves.}
\label{fig:handleswap1}
\end {center}
\end {figure}

\begin{remark}
Figure~\ref{fig:handleswap1} should be compared to Figure 4 in \cite{JuhaszThurston}. Our curves $\alpha', \beta', \alpha'', \beta''$ play the roles of $\alpha_2$, $\beta_1$, $\alpha_1$ and $\beta_2$ in their notation. Their set-up also involves an $\alpha$-equivalence and a $\beta$-equivalence, but in our case these act by the identity. Our diffeomorphism $g$ is the inverse of the one considered there.
\end{remark}

We extend $g$ to a diffeomorphism $g: Y \to Y$ as follows. Consider the disk enclosed by the curve $\gamma$ in Figure~\ref{fig:handleswap1}, and enlarge it slightly to obtain a disk $D$ that contains $\gamma$ in its interior. Let $T' = H' \cap D$ and $T'' = H'' \cap D$ be the feet of the handles contained in $D$. Let also $U= D \times [-1,1]$ be a three-dimensional cylindrical neighborhood of $D$ in $Y$, which intersects $\Sigma$ at $D= D \times \{0\}$, as in Figure~\ref{fig:cylinder}, with 
$$H' \cap U = T' \times [-1,0], \ \ H'' \cap U =  T'' \times [0,1].$$

\begin {figure}
\begin {center}
\input{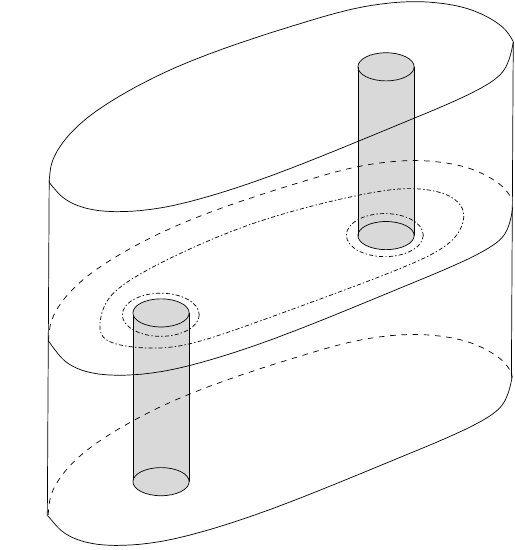_t}
\caption {A three-dimensional neighborhood $U$ of the disk enclosed by $\gamma$. The parts of the handles $H'$ and $H''$ contained in $U$ are in grey.}
\label{fig:cylinder}
\end {center}
\end {figure}

Observe that the diffeomorphism $g$, when restricted to $D \setminus (T' \cup T'')$, is not isotopic to the identity rel boundary. However, if we restrict it to $D \setminus T'$, it is isotopic to the identity rel boundary. This is because the Dehn twist around $\gamma''$ is isotopic to the identity when we can go over $T''$, and  the Dehn twists along $\gamma'$ and $\gamma$ are in opposite directions, so they cancel each other  out. By following the isotopy from $g|_{D \setminus T'}$ to the identity in each slice $D \times \{t\}, t \in [-1,0]$, we extend $g$ to a diffeomorphism from $D \times [-1,0]$, which acts by the identity on $D \times \{-1\}$ and on the grey cylinder $T' \times [0,1]$. Similarly, we extend $g$ to the upper half $D \times [0,1] \subset U$, using an isotopy from $g|_{D \setminus T''}$ to the identity. We obtain a diffeomorphism $g: U \to U$, which is the identity on $\del U = (\del D \times [-1,1]) \cup (D \times \{-1,1\})$ and on the two grey cylinders. We then extend $g$ to a diffeomorphism $g:Y \to Y$, by the identity outside $U$. 

Note that $g: U \to U$ preserves the Heegaard splittings $\cH, \cH'$ and $\cH''$. Observe also that the restrictions of $g$ to $\Sigma$ and $\Sigma'$ are isotopic to the identity. However, this is not the case for $\Sigma''$. We refer to 
$$ g: \cH'' \to \cH''$$
as a {\em simple handleswap}. 

The following definition is a variant of \cite[Definition 2.32]{JuhaszThurston}, adapted to our setting where the constructions are done starting directly from Heegaard splittings. Also, for simplicity, we restrict ourselves to invariants associated to a given manifold $Y$, rather than to a class of diffeomorphism types as in  \cite{JuhaszThurston}.

\begin{definition}
\label{def:JT}
Let $Y$ be a closed, connected, oriented three-manifold, and $\Cat$ a category. A {\em strong Heegaard invariant} $F$ of $Y$ consists of:
\begin{itemize}
\item an assignment to every Heegaard splitting $\cH$ of $Y$ of an object $F(\cH) \in \Cat$, and
\item to every Heegaard move $e$ (stabilization, destabilization, or diffeomorphism) between two splittings $\cH_1$ and $\cH_2$, an assignment of a morphism $F(e): F(\cH_1) \to F(\cH_2)$.
\end{itemize}

Furthermore, these morphisms are required to satisfy the following properties:
\begin{enumerate}
\item {\bf Functoriality:} 
\begin{enumerate}[(i)]
\item If $e: \cH_1 \to \cH_2$ and $f: \cH_2 \to \cH_3$ are diffeomorphisms, then for the combined diffeomorphism $f \circ e: \cH_1 \to \cH_3$, we have $F(f \circ e) = F(f) \circ F(e)$. 
\item If $e : \cH_1 \to \cH_2$ is a stabilization and $e': \cH_2 \to \cH_1$ is the corresponding destabilization, then $F(e') = F(e)^{-1}$.
\end{enumerate}

\item {\bf Commutativity:}
\begin{enumerate}[(i)]
\item If $e : \cH_1 \to \cH_2$ and $g: \cH_2 \to \cH_4$ are stabilizations given by adjoining disjoint solid tori $H_1$ resp. $H_2$, and $f: \cH_1 \to {\cH}_3$, $h:{\cH}_3 \to \cH_4$ are stabilizations given by attaching $H_2$ resp. $H_1$, then $F(h) \circ F(f) = F(g) \circ F(e)$.
\item If $e : \cH_1 \to \cH_2$ is a stabilization and  $f: \cH_1 \to \cH_3$ is a diffeomorphism, let $g: \cH_2 \to \cH_4$ be the same diffeomorphism as $f$ but acting on the stabilized surface, and $h: \cH_3 \to \cH_4$ the corresponding stabilization (the image of $e$ under $f$). Then, $F(h) \circ F(f) = F(g) \circ F(e)$.
\end{enumerate}

\item {\bf Continuity:} If $e: \cH \to \cH$ is a diffeomorphism such that $e|_{\Sigma}: \Sigma \to \Sigma$ is isotopic to $\id_{\Sigma}$, then $F(e) = \id_{F(\cH)}$.

\item {\bf Handleswap invariance:} Given a simple handleswap $g: \cH'' \to \cH''$ as in Figure~\ref{fig:handleswap1}, we ask that $F(g) = \id_{F(\cH'')}$.
\end{enumerate}
\end{definition}

If $Y$ is as in Definition~\ref{def:JT} and $z \in Y$ is a basepoint, we can define a strong Heegaard invariant of the pair $(Y, z)$ in a similar way, by considering only Heegaard splittings with $z$ on the Heegaard surface, and Heegaard moves that fix $z$.

The following is a rephrasing of Theorem 2.38 in \cite{JuhaszThurston} in our context. 

\begin{theorem}[Juh\'asz-Thurston-Zemke \cite{JuhaszThurston}]
\label{thm:JT}
Let $F$ be a strong Heegaard invariant of a three-manifold $Y$, with values in a category $\Cat$. 
Then, for any two Heegaard splittings $\cH, \cH'$, if we relate them by a sequence of Heegaard moves involving only stabilizations, destabilizations, and diffeomorphisms isotopic to the identity in $Y$
$$\cH =\cH_0 \xrightarrow{\phantom{b} e_1 \phantom{b}} \cH_1 \xrightarrow{\phantom{b} e_2 \phantom{b}} \dots  \xrightarrow{\phantom{b} e_n \phantom{b}} \cH_n = \cH',$$
the induced morphism
$$ F(\cH, \cH')= F(e_n) \circ \dots \circ F(e_1) : F(\cH) \to F(\cH')$$
depends only on $\cH$ and $\cH'$, and not on the sequence of moves chosen to relate them.

Moreover, the same naturality result works for based three-manifolds $(Y, z)$, if we consider only Heegaard splittings with $z$ on the Heegaard surface, and Heegaard moves that fix $z$.
\end{theorem}

Note that the output of Theorem~\ref{thm:JT} is the set of isomorphisms $F(\cH, \cH') : F(\cH) \to F(\cH')$ satisfying
\begin{itemize}
\item $F(\cH, \cH) = \id_{F(\cH)}$ for every $\cH$;
\item $F(\cH', \cH'') \circ F(\cH', \cH) = F(\cH, \cH'')$ for every $\cH, \cH'$ and $\cH''$.
\end{itemize}

\begin{remark}
If $\Cat$ is the category of groups (or Abelian groups), then the data consisting of the groups $F(\cH)$ and the isomorphisms $F(\cH, \cH')$ (satisfying the two properties above) is called a {\em transitive system of groups}, in the terminology of Eilenberg-Steenrod \cite[Definition 6.1]{EilenbergSteenrod}.
Given a transitive system of groups such as this, we obtain a single group $G$ with elements $g \in \prod_{\cH} F(\cH)$ such that $F(\cH, \cH')(g(\cH)) = g(\cH')$, for all $\cH, \cH'$. Thus, under the hypotheses of Theorem~\ref{thm:JT}, we obtain a group $G$ that is associated to the three-manifold $Y$. 

In our setting, once we establish naturality, we can apply this construction to define the hypercohomology invariants $ \HP^*(Y)$ and $\HPf^*(Y)$ as graded Abelian groups associated to $Y$ (independent of any choices, except for the basepoint $z$ for the framed versions).
\end{remark}

We now seek to apply Theorem~\ref{thm:JT} to the objects $$F(\cH)= \Pb_{L_0, L_1} \in \Perv'(\CharIrr(Y)),$$ defined from Heegaard splittings, where $\Perv'(\CharIrr(Y))$ is the category introduced in Definition~\ref{def:pervp}. For this, we first need to specify the maps $F(e)$. When $e$ is a stabilization, we use the isomorphism $\Stab$ constructed in Proposition~\ref{prop:stab}; for the corresponding destabilization, we use the inverse of $\Stab$. When $e$ is a diffeomorphism taking the Heegaard splitting $\cH=(\Sigma, U_0, U_1)$ to $\cH'=(\Sigma', U_0', U_1')$, observe that $e$ induces an isomorphism between the complex symplectic manifolds $\CharIrr(\Sigma)$ and $\CharIrr(\Sigma')$, taking the corresponding Lagrangians $L_0, L_1$ into $L_0', L_1'$. From here we obtain an automorphism of $\CharIrr(Y)$ and an isomorphism $F(e): \Pb_{L_0, L_1} \to \Pb_{L_0', L_1'}$ in the category $\Perv'(\CharIrr(Y))$.

The proof of Theorem~\ref{thm:main1} will be complete once we establish the following.

\begin{proposition}
The objects $F(\cH)= \Pb_{L_0, L_1} \in \Perv'(\CharIrr(Y))$ and the maps $F(e)$ defined above satisfy the hypotheses of Theorem~\ref{thm:JT}. Hence, $\Pb(Y) = \Pb_{L_0, L_1}$ is a natural invariant of $Y$.
\end{proposition}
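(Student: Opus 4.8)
The plan is to verify each of the four groups of conditions in Theorem~\ref{thm:JT} for the assignment $\cH \mapsto \Pb_{L_0,L_1}$ and the morphisms $F(e)$ described above. Functoriality~(i) is immediate: a composition of diffeomorphisms of Heegaard surfaces induces the composition of the corresponding biholomorphic symplectomorphisms of the character varieties, and Bussi's construction is manifestly functorial with respect to such symplectomorphisms (it is built intrinsically from the pair of Lagrangians and their spin structures, which are unique in our setting). Functoriality~(ii) holds because $\Stab$ for a destabilization was \emph{defined} as the inverse of $\Stab$ for the corresponding stabilization. For Continuity, if $e\colon\cH\to\cH$ restricts to a diffeomorphism of $\Sigma$ isotopic to $\id_\Sigma$, then the induced symplectomorphism of $\CharIrr(\Sigma)$ is isotopic to the identity through symplectomorphisms preserving $L_0$ and $L_1$; since the spin structures are unique and the perverse sheaf depends continuously on this data, $F(e)=\id$. (One should remark that the choice of basepoint $p\in\Sigma$ and framing of the fiber used to identify moduli of flat connections with representation spaces does not affect $\Pb_{L_0,L_1}$ as an object on $\CharIrr(Y)$, because changing these choices conjugates everything by a fixed element of $G$, which acts trivially on the intersection.)

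Next I would treat Commutativity. For~(i), two stabilizations along disjoint solid tori $H_1,H_2$ yield two symplectic submanifold inclusions $M\hookrightarrow M_1\hookrightarrow M_{12}$ and $M\hookrightarrow M_2\hookrightarrow M_{12}$, and because the tori are disjoint the corresponding normal bundles $V_0^{(1)},V_1^{(1)}$ and $V_0^{(2)},V_1^{(2)}$ are independent direct summands of $N_{MM_{12}}$; the quadratic forms (the Killing forms on the two copies of $\g$) likewise split. Applying Proposition~\ref{prop:stabL} in the two orders gives isomorphisms $\Pb_{L_0,L_1}\to\Pb_{L_0',L_1'}$ that, by inspecting the chain of identifications \eqref{eq:P01}--\eqref{eq:PR} in its proof, depend only on the decomposition of $N_{MM_{12}}$ and the forms, not on the order; hence they agree. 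Commutativity~(ii) is the compatibility of $\Stab$ with a diffeomorphism, which follows from the naturality of every ingredient in Proposition~\ref{prop:stabL} (polarizations, canonical bundles, spin structures, the form $q$) under the biholomorphic symplectomorphism induced by the diffeomorphism.

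The main obstacle is Handleswap invariance. Here one has the genus-$(h{+}2)$ splitting $\cH''$, and the simple handleswap $g\colon\cH''\to\cH''$ is a diffeomorphism whose restriction to $\Sigma''$ is \emph{not} isotopic to the identity, so Continuity does not apply directly. The strategy, following \cite[Section 2]{JuhaszThurston} but adapted to our setting, is to factor $g$ through the intermediate splittings: write $\cH'' = f(e(\cH))$ where $e,f$ are the two handle-attachments (each a composition of an isotopy and a stabilization), and use that $g$ commutes appropriately with the destabilizations back to $\cH$. Concretely, $F(g)$ equals the composite of: destabilizing $H''$ (inverse of $F(f_{\st})$), then the diffeomorphism $g$ viewed on the genus-$(h{+}1)$ splitting $\cH'$ — where by construction $g|_{\Sigma'}$ \emph{is} isotopic to the identity, so Continuity applies and this factor is the identity — then restabilizing. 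The remaining twist along $\gamma$ versus the twists along $\gamma'$, $\gamma''$ cancel after one is allowed to slide over the feet $T'$, $T''$ of the handles, exactly as in the three-dimensional picture of Figure~\ref{fig:cylinder}; translating this into the identity $F(g)=\id$ requires Commutativity~(ii) (to move the diffeomorphism past the stabilizations) together with Functoriality and Continuity. I expect this bookkeeping — carefully matching our $\Stab$ isomorphisms to the handle manipulations of \cite{JuhaszThurston}, and checking that no spin-structure sign is introduced (which is automatic here since $H^1$ of each Lagrangian with $\Z/2$ coefficients vanishes, by Lemma~\ref{lem:pifree}) — to be the delicate part. The framed case $F(\cH)=\Pb_{L_0^\#,L_1^\#}$ is handled identically, using the coisotropic submanifolds $C_i^\#=\Xtw(Z_i^\#)$ and Proposition~\ref{prop:stab}, and keeping the basepoint $z$ fixed throughout; the same verification of the four conditions then yields naturality of $\Pb_\#(Y)$ as an invariant of $(Y,z)$.
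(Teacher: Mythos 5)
There is a genuine gap in your treatment of handleswap invariance, and it is worth being precise about where your argument stops short.

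Your reduction is correct as far as it goes: using Commutativity~(ii) and $F(g')=\id$ (which follows from Continuity, since $g|_{\Sigma'}$ is isotopic to the identity), one obtains $F(g)\circ F(f)=F(\hat f)$, so handleswap invariance is equivalent to $F(\hat f)=F(f)$, i.e.\ to the statement that the two stabilizations adding the same handle $H''$ along the two different curves $c''$ and $\hat c''$ induce the same map. This is where your argument ends; you describe the rest as ``bookkeeping,'' propose that the only possible discrepancy is a spin-structure sign, and note that those signs vanish because $H^1(L_i;\Z/2)=0$. But the genuine obstruction is elsewhere. Composing $\Stab'\circ\Stab$ (over $M\hookrightarrow M'\hookrightarrow M''$) and $\hat\Stab'\circ\hat\Stab$ (over $M\hookrightarrow\widehat M'\hookrightarrow M''$) yields two instances of the $\Stab$ isomorphism of Proposition~\ref{prop:stabL} for the same inclusion $M\hookrightarrow M''$ with the same Lagrangian decomposition $N_{MM''}=W_0\oplus W_1$, $W_0=\Span_\g(a',a'')$, $W_1=\Span_\g(b',b'')$, and the same (unique) spin structures on everything in sight --- but with \emph{different} non-degenerate quadratic forms on $W_0$: one route uses the orthogonal direct sum of the Killing forms on $\Span(a')$ and $\Span(a'')$, the other uses that on $\Span(a'+a'')$ and $\Span(a'')$. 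Proposition~\ref{prop:stabL} genuinely takes the quadratic form $q$ as input (it enters via the graph Lagrangian $W\subset V_0\oplus V_1$ and the identifications \eqref{eq:PUU}--\eqref{eq:PR}), so there is no a priori reason the two resulting isomorphisms should agree; in fact, handleswap invariance being listed as a separate axiom in Juh\'asz--Thurston is precisely a warning that it does not follow from functoriality, commutativity and continuity, which you implicitly lean on.

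The missing idea, supplied in the paper, is to interpolate: the family of quadratic forms given by the Killing forms on $\Span(a'+t\,a'')$ and $\Span(a'')$, $t\in[0,1]$, connects the two inputs. Each $t$ produces a $\Stab$-type isomorphism $\Stab_t:\Pb_{L_0,L_1}\to\Pb_{L_0'',L_1''}$, so one gets a continuous path in $\Hom_{\Perv(\CharIrr(Y))}(\Pb_{L_0,L_1},\Pb_{L_0'',L_1''})$. Because morphisms of perverse sheaves over $\Z$ form a discrete set, this path is constant, whence $\Stab'\circ\Stab=\hat\Stab'\circ\hat\Stab$ and thus $F(\hat f)=F(f)$. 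Your write-up of the remaining conditions (functoriality, commutativity, continuity), including the observation that Commutativity~(i) reduces to the $\Stab$ map depending only on the normal-bundle decomposition and the quadratic form, is in line with the paper; but the handleswap step needs the interpolation-and-discreteness argument, not a sign check.
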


\begin{proof}
Functoriality and commutativity are immediate from the construction. For continuity, note that the induced action of $\Diff(\Sigma)$ on $\CharIrr(\Sigma)$ factors through the mapping class group $\pi_0(\Diff(\Sigma))$ of $\Sigma$; this is clear when we view the elements of $\CharIrr(\Sigma)$ as conjugacy classes of maps $\pi_1(\Sigma) \to G$. Thus, when $e$ is isotopic to the identity, it must act by the identity on $\CharIrr(\Sigma)$, and hence on the perverse sheaves.

To prove handleswap invariance, let us first reformulate it in terms of stabilizations. With the notation from the definition of a simple handleswap in Theorem~\ref{thm:JT}, we have moves $e = e_{\st} \circ e_{\iso}: \cH \to \cH'$ and $f = f_{\st} \circ f_{\iso}: \cH' \to \cH''$. Let us also consider another similar move $\hat{f}= \hat{f}_{\st} \circ \hat{f}_{\iso}: \cH' \to \cH''$, given by attaching a solid torus to $U_0$ along the disk bounded by the curve $\hat{c}''$ from Figure~\ref{fig:handleswap2}; the effect of this is still adding the handle $H''$, but we choose a different path between its feet to view it as a small  isotopy (push off into $U_0$) plus a stabilization. 

\begin {figure}
\begin {center}
\input{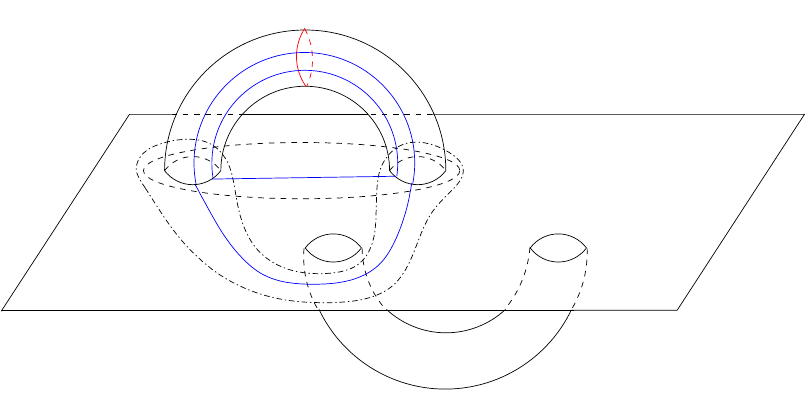_t}
\caption {Adding the handle $H''$ can be viewed as a stabilization in two different ways.}
\label{fig:handleswap2}
\end {center}
\end {figure}

By the commutativity between stabilizations and diffeomorphisms, together with functoriality and continuity for isotopies, we have
$$ F(\hat{f}) \circ F(g') = F(g) \circ F(f),$$
where $g'$ is the same as $g$, but acting on $\cH'$. Recall from the discussion of handleswaps that the restriction of the diffeomorphism $g$ to $\Sigma'$ is isotopic to the identity. By continuity, we must have $F(g') = \id_{F(\cH')}$. Therefore,
$$F(\hat{f}) = F(g) \circ F(f).$$
Thus, the handleswap invariance condition $F(g) = \id_{F(\cH'')}$ is equivalent to 
\begin{equation}
\label{eq:Fff}
 F(\hat{f}) = F(f).
 \end{equation}
In other words, we want that the move from $\cH'$ to $\cH''$ depends only on the handle $H''$, and not on the path joining the feet of $H''$. 

Let us also bring the move $e = e_{\st} \circ e_{\iso}: \cH \to \cH'$ into play. Since $F(e)$ is an isomorphism, the condition \eqref{eq:Fff} is equivalent to
\begin{equation}
\label{eq:Fe}
 F(\hat{f}) \circ F(e)= F(f) \circ F(e) : F(\cH) \to F(\cH'').
 \end{equation}
 
In our context, let us denote
$$M=\CharIrr(\Sigma), \ M' = \CharIrr(\Sigma'), \ M''=\CharIrr(\Sigma'').$$
We let $A', B', A'', B'', \hat{B}''$ denote the holonomies of flat connections around $\alpha', \beta', \alpha'', \beta'', \hat{\beta}''$. With a suitable choice of basepoint, we can arrange so that $\hat{\beta}'' = \beta''\cdot \beta'$ in $\pi_1(\Sigma)$, and therefore $\hat{B}''=B''B'$. 

Then, if we use the curve $c''$ to do the second stabilization, we find that $M'$ sits inside $M''$ as the subset given by $A''=1, B'' =1.$ However, if we use $\hat{c}''$ to do the stabilization, we get another copy of $M'$, which we will call $\widehat{M}'$, given by the subset of $M''$ with $A''=1, B''B' = 1.$ (The two embeddings of $M'$ into $M$ correspond to different projections $\pi_1(\Sigma'') \to \pi_1(\Sigma')$.) Finally, $M \subset M' \cap \widehat{M}' \subset M''$ is given by $A' = B' = A'' = B'' = 1.$

In summary, we have a commutative diagram of embeddings of complex symplectic manifolds:
$$
\xymatrix{
& M'  \ar@{^{(}->}^{\Phi'} [rd] & \\
M \ar@{^{(}->}^{\Phi} [ru]  \ar@{^{(}->}^{\hat{\Phi}} [rd] & & M'' \\
& \widehat{M}'  \ar@{^{(}->}^{\hat{\Phi}'} [ru] &
}$$

The Heegaard splittings give rise to complex Lagrangians
$$ L_0, L_1 \subset M, \ \ L_0', L_1' \subset M', $$
$$ \hat{L}'_0, \hat{L}'_1 \subset \widehat{M}', \ \ L_0'', L_1'' \subset M'',$$
all equipped with (unique) spin structures. We also have coisotropics induced by the compression bodies (as in the proof of Proposition~\ref{prop:stab}), which give decompositions of the normal bundles to each submanifold into holomorphic Lagrangian bundles
\begin{equation}
\label{eq:normals}
 N_{MM'} = V_0 \oplus V_1, \ \ N_{M'M''} = V'_0 \oplus V'_1, \ \ N_{M\widehat{M}'} = \widehat{V}_0 \oplus \widehat{V}_1, \ \ N_{\widehat{M}'M''} = \widehat{V}'_0 \oplus \widehat{V}'_1.
 \end{equation}

Specifically, the normal bundle $N_{MM'}$ to $M$ in $M'$ can be identified with the trivial bundle with fiber $H^1(T^2; \g)$, where $T^2$ is the torus attached in the stabilization from $\Sigma$ to $\Sigma'$. In the decomposition $N_{MM'} = V_0 \oplus V_1$, the first summand $V_0$ is spanned (over $\g$) by the class $a'$ Poincar\'e dual to $[\alpha']$, and the second summand $V_1$ by the class $b'$ Poincar\'e dual to $[\beta']$.

Similarly, the bundle $N_{M'M''}$ decomposes as $V_0' \oplus V_1'$, where $V_0'$ is spanned by  the class $a''$ Poincar\'e dual to $[\alpha'']$, and $V_1'$ by the class $b''$ Poincar\'e dual to $[\beta'']$. The  bundle $N_{M'M''}$ decomposes as $\widehat{V}_0' \oplus \widehat{V}_1'$, where $\widehat{V}_0'$ is spanned by $a''$  and $\widehat{V}_1'$ by $\hat{b}''= b' + b''$ (the image of $b''$ under the handleswap diffeomorphism $g$; cf. Figure~\ref{fig:handleswap1}). Finally, $N_{M \widehat{M}'}$ decomposes as $\widehat{V}_0 \oplus \widehat{V}_1$, with $\widehat{V}_0 $ is spanned by $\hat{a}'=a'+a''$ (the image of $a'$ under $g$) and $\widehat{V}_1'$ by $b'$. 

From the proof of Proposition~\ref{prop:stab}, we see that we have unique spin structures on all eight of the Lagrangian bundles appearing in \eqref{eq:normals}. We also have non-degenerate holomorphic quadratic forms
$$q \in H^0(\Sym^2 V_0^*), \ \ q' \in H^0(\Sym^2 (V'_0)^*), \ \ \hat{q} \in H^0(\Sym^2 \widehat{V}_0^*), \ \ \hat{q}' \in H^0(\Sym^2 (\widehat{V}'_0)^*),$$
all coming from the Killing form on $\g$.

Thus, we obtain stabilization isomorphisms
$$ \Stab: \Pb_{L_0, L_1} \xrightarrow{\phantom{b}\cong\phantom{b}} \Pb_{L'_0, L'_1}, \ \ \Stab': \Pb_{L'_0, L'_1} \xrightarrow{\phantom{b}\cong\phantom{b}} \Pb_{L''_0, L''_1},$$
$$ \hat{\Stab}: \Pb_{L_0, L_1} \xrightarrow{\phantom{b}\cong\phantom{b}} \Pb_{\hat{L}'_0, \hat{L}'_1}, \ \ \hat{\Stab}': \Pb_{\hat{L}'_0, \hat{L}'_1} \xrightarrow{\phantom{b}\cong\phantom{b}} \Pb_{L''_0, L''_1}.$$

Equation~\eqref{eq:Fe} translates into the commutativity of the diagram $$
\xymatrix{
& \Pb_{L'_0, L'_1}  \ar@{^{(}->}^{\Stab'} [rd] & \\
\Pb_{L_0, L_1} \ar@{^{(}->}^{\Stab} [ru]  \ar@{^{(}->}^{\hat{\Stab}} [rd] & & \Pb_{L_0'', L_1''} \\
& \Pb_{\hat{L}'_0, \hat{L}'_1}  \ar@{^{(}->}^{\hat{\Stab}'} [ru] &
}$$

The two compositions $\Stab' \circ \Stab$ and $\hat{\Stab}' \circ \hat{\Stab}$ are both instances of the  maps constructed from Proposition~\ref{prop:stabL}. They are both associated to the inclusion $M \hookrightarrow M''$, and to the same normal bundle decomposition
$$ N_{MM''} = W_0 \oplus W_1,$$
where
$$ W_0 = V_0|_{M} \oplus V_0'|_{M} =  \widehat{V}_0|_{M} \oplus \widehat{V}_0'|_{M} = \Span_{\g}(a', a'')$$
and
$$ W_1 = V_1|_{M} \oplus V_1' |_{M}= \widehat{V}_1|_{M} \oplus \widehat{V}_1'|_{M} = \Span_{\g}(b', b'').$$
There are unique spin structures on $W_0, W_1, \widehat{W}_0,  \widehat{W}_1$. The only difference lies in the quadratic forms on $W_0$ used to apply Proposition~\ref{prop:stabL}. To construct $\Stab' \circ \Stab$, we use the form $q \oplus q'$, whereas to construct $\hat{\Stab}' \circ \hat{\Stab}$, we use $\hat{q} \oplus \hat{q}'$. Concretely, in one case we take the direct sum of the Killing forms on the spans of $a'$ and $a''$, whereas in the other we take the direct sum of the Killing forms on the spans of $a' + a''$ and $a''$.

We now interpolate between these two quadratic forms by taking the direct sum of the Killing forms on the spans of $a' + ta''$ and $a''$, for $t \in [0,1]$. Proposition~\ref{prop:stabL} gives a continuous family of maps 
$$ \Stab_t: \Pb_{L_0, L_1} \to \Pb_{L_0'', L_1''}, \ t \in [0,1]$$
interpolating between $\Stab_0=\Stab' \circ \Stab$ and $\Stab_1 = \hat{\Stab}' \circ \hat{\Stab}$. However, any such family must be constant, because morphisms in the category of perverse sheaves (over $\Z$) are discrete objects. 

We conclude that \eqref{eq:Fe} is satisfied, and therefore handleswap invariance holds.
\end{proof}

Naturality for the objects  $\Pb_{L^{\#}_0, L^{\#}_1} \in \Perv'(R(Y))$ is established in a similar manner, with the additional constraint that we must fix the basepoint $z \in Y$.

This finishes the proofs of Theorem~\ref{thm:main1} and Theorem \ref{thm:framed}.

\section{Properties and examples}
\label{sec:Examples}

\subsection{Dualities}
Our invariants $P^{\bullet}(Y)$ and $P^{\bullet}_\#(Y,z)$ are defined for oriented three-manifolds. However, as the following result shows, they are independent of the orientation on $Y$. 
 
 \begin{proposition}
 Let $Y$ be a closed, connected, oriented three-manifold, and let $-Y$ denote $Y$ with the opposite orientation. Pick a basepoint $z \in Y$. Then, we have isomorphisms
 $$ P^{\bullet}(Y) \xrightarrow{\cong} P^{\bullet}(-Y), \ \ \ P^{\bullet}_\#(Y,z) \xrightarrow{\cong} P^{\bullet}_{\#}(-Y,z).$$
 \end{proposition}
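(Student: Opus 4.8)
The plan is to reduce the orientation-reversal statement to the symmetry already built into Bussi's construction: a perverse sheaf of vanishing cycles depends only on the pair of complex Lagrangians inside the complex symplectic manifold, and reversing the orientation of $Y$ has a clean effect on the relevant Heegaard-theoretic data. Concretely, if $Y = U_0 \cup_\Sigma U_1$ is a Heegaard splitting, then $-Y = U_1 \cup_{\overline{\Sigma}} U_0$, i.e. we reverse the orientation of the Heegaard surface and swap the roles of the two handlebodies. So I would first record the effect on the ambient moduli space: the character variety $\CharIrr(\Sigma)$ of the orientation-reversed surface $\overline{\Sigma}$ is the same complex manifold, with the same complex structure $J$, but the complex symplectic form $\omegac$ changes sign (it is built from a cup product on $H^2(\Sigma;\C)$, and $H^2$ flips sign under orientation reversal). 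The Lagrangians $L_0, L_1 \subset \CharIrr(\Sigma)$ are literally the same subspaces; what changes is their labelling (for $-Y$, the handlebody formerly called $U_0$ now plays the role of $U_1$ and vice versa).

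The key points are then: (1) a complex Lagrangian for $(M,\omega)$ is still a complex Lagrangian for $(M,-\omega)$, since $\omega|_L = 0$ iff $-\omega|_L = 0$; (2) Bussi's perverse sheaf $\Pb_{L_0,L_1}$ is symmetric in its two arguments — more precisely, $\Pb_{L_0,L_1} \cong \Pb_{L_1,L_0}$ canonically (this follows from the construction in Section~\ref{sec:Bussi}: one could set up $L_1$-charts instead of $L_0$-charts, the vanishing cycle sheaf $\PVb_{U,f}$ depending only on the graph of $df$ and the zero section symmetrically, and the $\Z_2$-bundles $Q_{P,U,f,i}$ parametrizing spin-structure identifications are manifestly symmetric); and (3) rescaling $\omega$ by $-1$ — or more generally by a nonzero scalar — does not change the perverse sheaf, because in a Darboux chart (Theorem~\ref{thm:Darboux}) the generating function $f$ is unaffected up to the biholomorphic symplectomorphism $h$, so the local models $\PVb_{U,f}$ and the gluing data are unchanged. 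Combining these, the perverse sheaf built from the $(-Y)$-splitting on $\CharIrr(-Y) = \CharIrr(Y)$ (the intersection $L_1 \cap L_0 = L_0 \cap L_1 = \CharIrr(Y)$ as analytic spaces) is canonically isomorphic to $\Pb_{L_0,L_1} = P^{\bullet}(Y)$. The spin structures cause no trouble since, as noted in Section~\ref{sec:Invariant}, each $L_i$ has a unique spin structure, and uniqueness is preserved under swapping labels. The argument for $P^{\bullet}_\#$ is identical, working in $\Xtw(\Sigma^\#)$ with $L_0^\#, L_1^\#$; one must additionally note that the connect-summed torus and basepoint $z$ behave symmetrically, and that orientation reversal of $Y$ induces orientation reversal of $\Sigma^\#$, again only flipping the sign of $\omegac$.

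I would organize the proof as two lemmas — one asserting $\Pb_{L_0,L_1} \cong \Pb_{L_1,L_0}$ canonically, and one asserting invariance of $\Pb_{L_0,L_1}$ under scaling the symplectic form — followed by the short deduction above, invoking Theorem~\ref{thm:main1} (resp.\ Theorem~\ref{thm:framed}) to know the resulting object is well-defined independently of the splitting. The main obstacle, I expect, is verifying claim (2), the symmetry $\Pb_{L_0,L_1}\cong\Pb_{L_1,L_0}$, at the level of the $\Z_2$-torsors and the gluing in Theorem~\ref{thm:descent}: one needs to check that interchanging $L_0$ and $L_1$ sends an $L_0$-chart $(P,U,f,i)$ to an $L_1$-chart whose generating function is $-f$ up to a fibrewise-linear symplectomorphism of $T^*U$, that $\PVb_{U,-f} \cong \PVb_{U,f}$ canonically (true: the critical locus and Milnor fibers are unchanged, and passing from $f$ to $-f$ corresponds to complex conjugating the parameter $\epsilon$, which is homotopically trivial on $\C^*$), and that the canonical bundle identification $\Theta: K_{L_0}|_P \to K_{L_1}|_P$ used to define $Q_{P,U,f,i}$ is replaced by its inverse, so the $\Z_2$-bundle is canonically the same. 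None of this is deep, but it requires careful bookkeeping against the precise definitions in \cite{Bussi}; alternatively, one may be able to cite a symmetry statement directly from \cite{Bussi} or \cite{BBDJS} and shorten this considerably.
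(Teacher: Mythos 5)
Your approach is correct in outline but takes a longer route than the paper, and it contains one genuine slip. The paper does not relabel the handlebodies: a Heegaard splitting $(\Sigma, U_0, U_1)$ of $Y$ is also one for $-Y$, with the orientations of $\Sigma$, $U_0$, $U_1$ reversed but the labels kept. Since the orientation of $\Sigma$ enters the construction only through the sign of $\omegac$ (the complex structure $J$ comes from $G$ and is unaffected), the entire statement reduces to showing that $\Pb_{L_0,L_1}$ is unchanged under $\omega \mapsto -\omega$ for fixed Lagrangians. The paper does this directly: if $(S,P,U,f,h,i)$ is an $L_0$-chart for $(M,\omega)$, then $(S,P,U,-f,h\circ r,i)$, with $r$ the fiberwise $(-1)$ map on $T^*U$, is an $L_0$-chart for $(M,-\omega)$, and the family $e^{i\theta}f$, $\theta \in [0,\pi]$, gives the isomorphism $\PVb_{U,f} \cong \PVb_{U,-f}$. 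Your choice to write $-Y = U_1 \cup_{\overline{\Sigma}} U_0$ forces you to also prove the symmetry $\Pb_{L_0,L_1} \cong \Pb_{L_1,L_0}$; this is plausibly true and closely related to the Verdier self-duality of Proposition~\ref{prop:Vsd}, but it is an avoidable extra lemma, and your remark that the $\Z_2$-bundles $Q_{P,U,f,i}$ are ``manifestly symmetric'' understates the bookkeeping (an $L_1$-chart replaces $\Theta$ by $\Theta^{-1}$ and $f$ by $-f$ at the same time, and the sign interactions are precisely the delicate part).

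The concrete error is in your verification that $\PVb_{U,f} \cong \PVb_{U,-f}$: you say this ``corresponds to complex conjugating the parameter $\epsilon$, which is homotopically trivial on $\C^*$.'' Complex conjugation is orientation-reversing and is not null-homotopic on $\C^*$. The relevant map is negation $\epsilon \mapsto -\epsilon$, which is homotopic to the identity via multiplication by $e^{i\theta}$, $\theta \in [0,\pi]$ — the same family the paper uses. Moreover, this isomorphism is not canonical in the strong sense you assert: the two natural paths from $1$ to $-1$ in $\C^*$ give isomorphisms differing by the monodromy of $\phi_f$ (the paper flags this explicitly: ``the square of this isomorphism is the monodromy map''), so one must fix a convention before patching via descent. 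Finally, your initial justification of scaling invariance (``the generating function $f$ is unaffected up to the biholomorphic symplectomorphism $h$'') is imprecise — $f$ is replaced by $-f$, which is exactly why the $\PVb_{U,\pm f}$ comparison is needed — though you return to this more carefully in your last paragraph.
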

 
 \begin{proof}
 A Heegaard splitting $(\Sigma, U_0, U_1)$ for $Y$ gives a Heegaard splitting for $-Y$, with the orientations on $\Sigma, U_0$ and $U_1$ being reversed. The orientation on $\Sigma$ is involved in the definition of the complex symplectic form $\omega_{\C}$ from~\eqref{eq:omegac}. Reversing the orientation changes the sign of $\omega_{\C}$, but does not affect the complex structure $J$ (since the latter comes from the complex structure on $G=\sl$, not on $\Sigma$). 
 
Let us consider Bussi's construction from Section~\ref{sec:Bussi}. Suppose $(M, \omega)$ is a complex symplectic manifold with an $L_0$-chart $(S, P, U, f, h, i)$. Part of the data is the isomorphism $h: S \to T^*U$. If we denote by $r: T^*U \to T^*U$ the map given by multiplication by $-1$ on the fibers, we find that $(S, P, U, -f, h \circ r, i)$ is an $L_0$-chart for $(M, -\omega)$. Given $f: U \to \C$, note that we can relate $f$ to $-f$ via the family $e^{i\theta}f, \theta \in [0, \pi]$. This gives an isomorphism between the vanishing cycle functors for $f$ and $-f$. (The square of this isomorphism is the monodromy map.) By patching together these isomorphisms, we obtain an isomorphism between the perverse sheaves $\Pb(L_0, L_1)$ defined in $(M, \omega)$ and $(M, -\omega)$. Applying this to our setting, we get the desired claim about the invariants for $Y$ and $-Y$. 
 \end{proof}

We can also ask how $P^{\bullet}(Y)$ and $P^{\bullet}_\#(Y, z)$ behave under Verdier duality. In \cite[Theorem 2.1]{Bussi}, Bussi shows that, for any spin complex Lagrangians $L_0$ and $L_1$, the perverse sheaf $\Pb_{L_0, L_1}$ is naturally isomorphic to its Verdier dual. As a consequence, we have
\begin{proposition}
\label{prop:Vsd}
The invariants $P^{\bullet}(Y) \in \Perv'(\CharIrr(Y))$ and $ P^{\bullet}_\#(Y,z) \in \Perv'(\Rep(Y))$ are Verdier self-dual.
\end{proposition}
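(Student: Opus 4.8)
The plan is to deduce the proposition from Bussi's self-duality result and then upgrade it to a statement about the invariants by checking compatibility with the naturality isomorphisms of Section~\ref{sec:Invariant}. Recall from \cite[Theorem 2.1]{Bussi} that whenever $L_0, L_1 \subset M$ are spin complex Lagrangians in a complex symplectic manifold, there is a natural isomorphism
$$ \nu_{L_0, L_1} : \Pb_{L_0, L_1} \xrightarrow{\ \cong\ } \DD_X(\Pb_{L_0, L_1})$$
in $\Perv(X)$, where $X = L_0 \cap L_1$; moreover this isomorphism is built locally out of $L_0$-charts, using the canonical self-duality of $\phi^p_f(\Z_U[\dim U])$ and the fact that the twisting $\Z_2$-bundle $Q_{P,U,f,i}$ is self-dual. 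Applying this to the Lagrangians $L_0, L_1 \subset \CharIrr(\Sigma)$ (which carry unique spin structures by Lemma~\ref{lem:pifree} and Fact~\ref{fact:spin}) and to $L_0^\#, L_1^\# \subset \Xtw(\Sigma^\#)$, we obtain for each Heegaard splitting of $Y$ isomorphisms $\Pb(Y) \cong \DD(\Pb(Y))$ and $\Pb_\#(Y) \cong \DD(\Pb_\#(Y))$.

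It then remains to check that these isomorphisms are compatible with the canonical identifications between the perverse sheaves attached to different Heegaard splittings, so that they descend to the transitive systems defining $\Pb(Y)$ and $\Pb_\#(Y)$. For a stabilization taking $(\Sigma, U_0, U_1)$ to $(\Sigma', U_0', U_1')$ one must show that the diagram
$$
\xymatrix{
\Pb_{L_0, L_1} \ar[r]^{\Stab} \ar[d]_{\nu_{L_0, L_1}} & \Pb_{L_0', L_1'} \ar[d]^{\nu_{L_0', L_1'}} \\
\DD(\Pb_{L_0, L_1}) & \DD(\Pb_{L_0', L_1'}) \ar[l]_{\DD(\Stab)}
}
$$
commutes, and similarly for diffeomorphism-induced maps and the framed analogues. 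For diffeomorphisms this is immediate, since $\DD$ is a functor and Bussi's construction is natural under isomorphisms of complex symplectic manifolds carrying the Lagrangians and their spin structures. For stabilization, one uses that both $\Stab$ (built in the proof of Proposition~\ref{prop:stabL}) and $\nu_{L_0,L_1}$ are assembled from comparisons of $L_0$-charts; since Verdier duality commutes with pullback along the isomorphisms $i$, with the vanishing-cycle functors $\phi^p_f$, and with tensoring by $\Z_2$-bundles, the same local comparisons intertwine the two sides, at least up to a locally constant sign. Because morphisms in $\Perv(X)$ over $\Z$ form a discrete set, as in the handleswap argument of Section~\ref{sec:naturality}, such a sign is rigid and can be pinned down on a single $L_0$-chart using the explicit model of Example~\ref{ex:PVclean}.

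The main obstacle is precisely this last compatibility check: one has to track carefully how Bussi's duality isomorphism interacts with the local models, with the $\Z_2$-bundles $Q$, and with the quadratic-form twist $q$ appearing in Proposition~\ref{prop:stabL}. I expect that, exactly as in the naturality proof, the key point is that all the relevant diagrams of perverse-sheaf morphisms can differ only by a global sign, which is determined on one chart by the description of $\phi^p_f$ and of its Verdier dual. Once this is in place, the isomorphisms $\nu_{L_0,L_1}$ assemble into an isomorphism of transitive systems, yielding the Verdier self-duality of $\Pb(Y)$ and $\Pb_\#(Y)$ as invariants of $Y$ (and of $(Y,z)$).
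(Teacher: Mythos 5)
Your first paragraph is exactly the paper's proof: Proposition~\ref{prop:Vsd} is stated as an immediate consequence of \cite[Theorem~2.1]{Bussi}, applied to the spin complex Lagrangians $L_0, L_1 \subset \CharIrr(\Sigma)$ and $L_0^{\#}, L_1^{\#} \subset \Xtw(\Sigma^{\#})$, and the paper offers no further argument. So the core of your argument is right and matches the source.

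The second and third paragraphs, where you worry about compatibility of the self-duality isomorphisms with the stabilization and diffeomorphism maps, are addressing a stronger statement than what Proposition~\ref{prop:Vsd} actually claims. The proposition asserts only that $P^{\bullet}(Y)$ and $P^{\bullet}_{\#}(Y)$ are Verdier self-dual, i.e.\ that for each Heegaard splitting the perverse sheaf $\Pb_{L_0,L_1}$ is isomorphic to its Verdier dual; and this is exactly what Bussi's theorem gives. The only downstream use in the paper is the abstract duality isomorphism between $\HPcs^*$ and $\HP^{*}$ via \eqref{eq:Vdual}, which requires nothing more. If one wanted to promote the self-duality to a canonical isomorphism compatible with the transitive system -- a genuinely stronger statement -- then your compatibility check would become relevant; but as you acknowledge, you only sketch it rather than carry it out, so as a proof of that stronger claim your argument would be incomplete. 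For the proposition as stated, your first paragraph already suffices and the rest can be dropped.
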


Starting from $P^{\bullet}(Y)$ and $P^{\bullet}_\#(Y,z)$, we defined $\HP^*(Y)$ and $\HPf^*(Y,z)$ by taking hypercohomology. We could alternatively take hypercohomology with compact support, and define
$$ \HPcs^*(Y) := \HH_{\cs}^*( P^{\bullet}(Y)),$$
$$ \HPfcs^*(Y,z) := \HH_{\cs}^*( P^{\bullet}_{\#}(Y,z)).$$
From \eqref{eq:Vdual} and Proposition~\ref{prop:Vsd} we obtain duality isomorphisms 
$$ \HPcs^k(Y) \cong \Hom(\HP^{-k}(Y), \Z) \oplus \Ext^1(\HP^{-k-1}(Y), \Z)$$
and
$$ \HPfcs^k(Y,z) \cong \Hom(\HPf^{-k}(Y,z), \Z) \oplus \Ext^1(\HPf^{-k-1}(Y,z), \Z).$$

Observe also that, since we use sheaf cohomology, the invariants $\HP^*(Y)$ and $\HPf^*(Y,z)$ are models for Floer cohomology, rather than homology. We can define homological invariants $\HP_*(Y)$ and $\HP^{\#}_{*}(Y,z)$ by dualizing the complexes that define $\HP^*(Y)$ resp. $\HPf^*(Y,z)$, and then taking homology. We have 
$$\HP_k(Y) \cong \Hom(\HP^{k}(Y), \Z) \oplus \Ext^1(\HP^{k+1}(Y), \Z) \cong \HPcs^{-k}(Y)$$
and
$$\HP^{\#}_k(Y,z) \cong \Hom(\HPf^{k}(Y,z), \Z) \oplus \Ext^1(\HPf^{k+1}(Y,z), \Z) \cong \HPfcs^{-k}(Y,z).$$

\subsection{Computational tools}
To calculate the perverse sheaf invariants $P^{\bullet}(Y)$ and $P^{\bullet}_\#(Y,z)$ in specific examples, we will rely on Theorem~\ref{thm:main2} from the Introduction.

\begin{proof}[Proof of Theorem~\ref{thm:main2}]
By Lemmas~\ref{lem:cleanChar} and \ref{lem:cleanRep}, regularity of the underlying schemes is equivalent to the condition that the Lagrangians intersect cleanly. The desired result now follows from Proposition~\ref{prop:BClean}.
\end{proof}

The following lemma describes a simple situation where we can identify the local system in Theorem~\ref{thm:main2}(b). 

\begin{lemma}
\label{lem:trivial}
Let $Y$ be a closed, connected, oriented three-manifold, $z \in Y$ a basepoint, and $\rho \in \Rep(Y)$ a reduced, irreducible representation. We assume that $[\rho]$ is isolated in the character variety $\Char(Y)$, so that (by Lemma~\ref{lem:cleanRep}) the Lagrangians $L_0^{\#}$ and $L_1^{\#}$ intersect cleanly along the orbit $Q:=\O_{\rho}$. Then, we have an isomorphism 
$$\P^{\bullet}_{\#}(Y, z)|_Q \cong \Z_Q[3].$$
\end{lemma}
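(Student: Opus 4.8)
The plan is to combine the clean-intersection analysis of Section~\ref{sec:Clean} with an explicit identification of the symplectic normal bundle of the orbit $Q = \O_\rho$. Since $\rho$ is reduced and irreducible, Lemma~\ref{lem:cleanRep} tells us that $L_0^\#$ and $L_1^\#$ intersect cleanly along $Q$, and since $[\rho]$ is isolated in $\Char(Y)$, the component of $\Rep(Y) = L_0^\# \cap L_1^\#$ through $\rho$ is precisely the single $\Gad$-orbit $Q \cong \Gad \cong \rp^3 \times \R^3$, which has complex dimension $3$. By Proposition~\ref{prop:BClean}, $\P^\bullet_\#(Y)|_Q$ is therefore a local system on $Q$ with stalks $\Z[3]$, and it remains to show this local system is trivial, i.e. that the monodromy is trivial. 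For this I would invoke Lemma~\ref{lem:vanish-cycl-desc-1}: it suffices to exhibit a polarization of the symplectic normal bundle $NQ = N_0 Q \oplus N_1 Q$ transverse to both $N_i Q$, such that the induced map $TL_1^\#|_Q \to TL_0^\#|_Q$ preserves spin structures and the associated quadratic form has a globally trivializable positive sub-bundle $W^+$ (so that $|W^+| \cong \Z_Q$).

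The key step is to describe $NQ$ explicitly. Using the Mayer--Vietoris / van Kampen setup from the proof of Lemma~\ref{lem:cleanRep}, the tangent space $T_\rho \Rs(Y) = Z^1(\pi_1(Y);\Ad\rho)$ sits inside $T_\rho M^\# = H^1(\pi_1(\Sigma^\#);\Ad\tilde\rho)$ as $T_\rho L_0^\# \cap T_\rho L_1^\#$, with $T_\rho L_i^\# = Z^1(\pi_1(U_i);\Ad\rho)$ (modulo the torus factor, which contributes trivially). Quotienting by $B^1(\pi_1(Y);\Ad\rho) = T_\rho Q$, the symplectic normal bundle $N_i Q$ is identified fiberwise with $H^1(\pi_1(U_i);\Ad\rho)$ (the reductions modulo coboundaries), and $NQ$ with $H^1(\Sigma;\Ad\rho) / \bigl(H^0\text{-type corrections}\bigr)$ carrying Goldman's cup-product symplectic form $\omegac$ from~\eqref{eq:omegac}. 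Since $\pi_1(U_i)$ is free, the appropriate Lagrangians $N_i Q$ are the images of $H^1(U_i;\Ad\rho)$, and a convenient transverse polarization can be built from the hyperk\"ahler structure: the complex structure $I$ (or more precisely a generic $K(\theta)$) rotates the real Lagrangian $N_0 Q$ into a transverse one, exactly as in the folklore vanishing argument quoted near Conjecture~\ref{conj:tame}. Equivalently — and this is cleaner — I would argue directly that $Q$ is simply connected so that \emph{any} $\Z$-local system on $Q$ with stalk $\Z$ is automatically trivial: indeed $\pi_1(Q) = \pi_1(\Gad) = \pi_1(\rp^3) = \Z/2$, which has no nontrivial homomorphisms to $\operatorname{Aut}(\Z) = \Z/2$ other than possibly one, so I must rule out that one.

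Thus the real content reduces to computing the monodromy of the sign bundle $|W^+|$ around the generator of $\pi_1(Q) \cong \Z/2$. Here I would use the spin-structure hypothesis: the Lagrangians $L_i^\#$ are diffeomorphic to products of copies of $G \cong S^3 \times \R^3$, hence have $H^1(L_i^\#;\Z/2) = 0$ and thus \emph{unique} spin structures, so the map $TL_1^\#|_Q \to TL_0^\#|_Q$ automatically preserves them and the bundle $Q_{P,U,f,i}$ in Lemma~\ref{lem:vanish-cycl-desc-1} is canonically trivial. Consequently $\P^\bullet_\#(Y)|_Q \cong |W^+|[3]$, and $|W^+|$ is a $\Z/2$-local system pulled back from the universal family of the quadratic form $q$ on $N_0 Q$; since $N_0 Q \cong H^1(U_i;\Ad\rho)$ is a complex vector bundle over the simply-connected-at-$\pi_2$ space $Q$ — or more simply, since the quadratic form $q$ extends to a \emph{complex} (not just real) nondegenerate form, its real positive sub-bundle $W^+$ is orientable because an orientation is detected by the $\C$-structure — the monodromy is trivial. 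Hence $|W^+| \cong \Z_Q$ and $\P^\bullet_\#(Y)|_Q \cong \Z_Q[3]$, as claimed.

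The main obstacle I anticipate is making the last point fully rigorous: namely verifying that the real positive-definite sub-bundle $W^+ \subset N_0 Q$ arising from the complex quadratic form $q$ is actually orientable over $Q$, rather than merely stalkwise $\Z$. The cleanest route is probably to observe that $q$ is the restriction of a complex-bilinear nondegenerate form, so after a $\gl(k,\C)$-change of frame $W^+$ can be taken to be a fixed real subspace stabilized by the (connected) structure group $\operatorname{GL}(k,\C)$ acting on the complexification — connectedness of $\operatorname{GL}(k,\C)$ then forces the orientation bundle $o(W^+)$ to be trivial. Alternatively, one can bypass $W^+$ entirely by noting that the Milnor fiber $q^{-1}(\epsilon) \simeq T^*S^{k-1}$ has its top cohomology $H^{k-1}$ canonically oriented by the complex structure on $N_0 Q$ (the same phenomenon that makes the Maslov grading of complex Lagrangians vanish), so the identification $i^*(\PVb_{U,f})_x \cong \Z[k]$ is canonical and globalizes. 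Either way, once orientability of the relevant $\Z/2$-datum is established the conclusion is immediate.
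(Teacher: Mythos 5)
Your setup is right, and you correctly reduce the problem via Proposition~\ref{prop:BClean} and Lemma~\ref{lem:vanish-cycl-desc-1} to showing (i) that the isomorphism $TL_1^\#|_Q \to TL_0^\#|_Q$ of \eqref{eq:TLL} preserves spin structures and (ii) that the local system $|W^+|$ is trivial. But both of your arguments at the crux of the matter contain genuine errors.

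For (ii), neither of your proposed arguments is correct. A complex nondegenerate quadratic form is preserved by $\operatorname{O}(k,\C)$, not $\operatorname{GL}(k,\C)$, and $\operatorname{O}(k,\C)$ has two components; so ``connectedness of the structure group'' is simply unavailable. The alternative via the Milnor fiber has the same flaw: the vanishing cycle in $q^{-1}(\epsilon) \simeq T^*S^{k-1}$ is the unit sphere $S^{k-1} \subset W^+$, and its orientation \emph{is} exactly a choice of orientation of $W^+$; the reflection $z_1 \mapsto -z_1$ is $\C$-linear, preserves $q$, and reverses that orientation, so the complex structure gives no canonical choice. (The phenomenon that makes the Maslov index vanish concerns the signature of a Hessian, not the orientation of the Milnor fiber.) The correct mechanism, which you never invoke, is the $\Gad$-action: since $\Gad$ acts transitively on $Q$ and preserves the decomposition $NQ \cong N_0Q \oplus N_0^*Q$, one can fix a polarization transverse to $N_0Q$ and $N_1Q$ at a single point of $Q$ and extend it $\Gad$-equivariantly; this trivializes $N_0Q$, $N_0^*Q$, and hence $W^+$ as bundles over $Q$, making $o(W^+)$ trivial for free.

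For (i), the reasoning ``the spin structures on $L_i^\#$ are unique, so the map automatically preserves them'' does not hold, because the isomorphism $TL_1^\#|_Q \to TL_0^\#|_Q$ exists only over $Q$, where $H^1(Q;\Z/2) \cong \Z/2$ and spin structures are no longer unique. Uniqueness on the total Lagrangians pins down the two restricted spin structures on $TL_i^\#|_Q$, but a priori the isomorphism could still swap the two classes. The paper handles this by lifting to the simply connected open sets $\tL_i = \RepIrr(U_i)$, decomposing $T\tL_i \cong p_i^*TL_i \oplus \g$, and then observing that the projection $N_1Q \to N_0Q$ is $\Gad$-equivariant and hence pulled back from a linear map over the single point $[\rho] \in \CharIrr(Y)$ — where spin-structure preservation is automatic. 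That additional argument is what you are missing.

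So while your outline is the right one, the two load-bearing steps need to be replaced by the $\Gad$-equivariance arguments; as written the proposal does not establish triviality of the local system.
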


\begin{proof}
Observe that $Q$ is diffeomorphic to $\Gad = \psl \cong \rp^3 \times \R^3$. From Theorem~\ref{thm:main2}(b) we know that $\P^{\bullet}_{\#}(Y)|_Q$ is a local system over $Q$ with fiber $\Z$, in degree $-3$. Since $H^1(Q; \Z_2) \cong \Z_2$, there are two possibilities for the local system. To show that it is the trivial one, we will use Lemma~\ref{lem:vanish-cycl-desc-1}.

Recall from the proof of Lemma~\ref{lem:cleanRep} that we have an inclusion $\Rep(\Sigma) \hookrightarrow  M^{\#}$, and the Lagrangians $L^{\#}_0$ and $L^{\#}_1$ live inside $\Rep(\Sigma)$. In the situation at hand, at any point $x \in Q$, by the clean intersection condition we have that $T_x L^{\#}_0 \cap T_xL^{\#}_1=T_xQ$ is three-dimensional. Therefore, we must have
$$T_x L^{\#}_0 + T_xL^{\#}_1= T_x (\Rep(\Sigma)) \subset T_xM^{\#}.$$
We deduce that the symplectic normal bundle of $Q$ is
$$NQ = T_x (\Rep(\Sigma))/T_xQ.$$
  
The group $\Gad$ acts transitively on $Q$. There is no natural action of $\Gad$ on the ambient manifold $M^{\#} = \Xtw(\Sigma^{\#})$, but there is one (given by conjugation) on the subvariety $\Rep(\Sigma)$, and this action preserves the Lagrangians $L^{\#}_0$ and $L^{\#}_1$. Hence, we get a $\Gad$-action on the normal bundle $NQ$, which preserves the decomposition
$$  NQ \cong N_0 Q \oplus N^*_0 Q.$$
considered in \eqref{eq:linear_polarization_normal_intersection}. The $\Gad$-action gives a trivialization of the bundles $N_0Q$ and $N^*_0Q$ over $Q$. By choosing a polarization of $NQ$ transverse to $N_0Q$ and $N^*_0Q$ at some $x \in Q$, we can use the $\Gad$-action to extend it to such a polarization at all points of $Q$. For this polarization, the bundle $W^+$ defined in Section~\ref{sec:Clean} is clearly trivial.

In view of Lemma~\ref{lem:vanish-cycl-desc-1}, the only thing that remains to be proved is that the isomorphism $TL^{\#}_1|_Q \to TL^{\#}_0|_Q$ from \eqref{eq:TLL} preserves spin structures. To do this, recall that the spin structures on $L^{\#}_0=\Rep(U_0)$ and $L^{\#}_1=\Rep(U_1)$ are unique (because the Lagrangians are simply connected). The same is true for Lagrangians  $L_0=\CharIrr(U_0), L_1=\CharIrr(U_1) \subset \CharIrr(\Sigma)$, which intersect transversely at the point $[\rho]$. Furthermore, if we consider the open subsets 
$$\tL_i := \RepIrr(U_i) \subset L^{\#}_i = \Rep(U_i),i=0,1,$$
we can see from the proof of Lemma~\ref{lem:pifree} that these are also simply connected. There are  natural projections $p_i : \tL_i \to L_i$, with fibers $\Gad$, and therefore we have isomorphisms
$$ T\tL_i \cong p_i^*TL_i \oplus \g.$$
By the uniqueness of the spin structures on $\tL_i$ and $L_i$, we can think of the spin structure on $T\tL_i$ as obtained from the one on $L_i$ via pull-back and adding the trivial spin structure on $\g$. 

When restricted to $Q$, we can also identify the pull-backs $p_i^*TL_i$ with the normal bundles $N_iQ$. After these identifications, the projection 
\begin{equation}
\label{eq:TLQ}
T\tL_1|_Q \to T\tL_0|_Q
\end{equation} is the direct sum of the identity on $TQ \cong \g$ and a $\Gad$-equivariant projection $N_1Q \to N_0Q$. This second summand is the pull-back of a projection $T_{[\rho]}L_1 \to T_{[\rho]}L_0$, which must preserve spin structures. (A spin structure on a vector bundle over a point, i.e. on a vector space, is unique.) Note that the spin structures on $T\tilde L_i|_Q$ are equivariant under the $\Gad$-action, because they are restrictions of the spin structures on the whole of $\tilde L_i$, which are unique and therefore obtained by pull-back from the ones on $L_i$. Once we have this, we see that the $\Gad$-equivariant isomorphism \eqref{eq:TLQ} matches the spin structures on $\tL_1$ and $\tL_0$. By uniqueness, these are exactly the restrictions of the spin structures on $L^{\#}_0$ and $L^{\#}_1$.
\end{proof}

\subsection{Examples}
We present a few calculations, for some of the examples discussed in Section~\ref{sec:ex3}. We only look at situations where the underlying scheme is regular, so that we can apply Theorem~\ref{thm:main2}. In these cases, the perverse sheaf under consideration is a local system with fibers $\Z$, supported in degrees $-k$, where $k$ is the complex dimension of the respective component of $\CharIrr(Y)$ or $\Rep(Y)$. We will use the subscript $(i)$ to denote a group in degree $i$.

For $Y=S^3$, we have $\CharIrr(S^3) =\emptyset$ and $\Rep(S^3)$ is a point, so
$$\HP^*(S^3)=0, \ \ \ \HPf^*(S^3) = \Z_{(0)}.$$

For $Y$ being the connected sum of $k$ copies of $S^1 \times S^2$ (cf. Example~\ref{ex:free} and Section~\ref{sec:free}), the sheaf $\P^{\bullet}_{\#}(Y)$ is a local system with fibers $\Z$ (in degree $-3k$) over $G^k$. Since $G \cong S^3 \times \R^3$ is simply connected, the local system must be trivial, and we get
$$ \HPf^*(\#^k (S^1 \times S^2)) \cong \Z^{k}_{(-3)} \oplus \Z^{k}_{(0)}.$$
When $k=1$, there are no irreducible representations and therefore 
$$ \HP^*(S^1 \times S^2)=0.$$
For $k=2$, the space $\CharIrr(F_2)$ is not simply connected (see Remark~\ref{rem:doi}), and it is not immediately clear how to identify the local system $\P^{\bullet}(Y)$. However, for all $k \geq 3$, we have $\pi_1(\CharIrr(F_k))=1$ by Lemma~\ref{lem:pifree}, and therefore 
$$\HP^*(\#^k (S^1 \times S^2)) \cong H^{*+3k-3}(\CharIrr(F_k); \Z).$$

Next, we will look at lens spaces $L(p,q)$ and Brieskorn spheres $\Sigma(p,q,r)$. For these manifolds, the computations of $\HP^*$ and $\HPf^*$ were stated in the Introduction, in Theorems~\ref{thm:lens} and \ref{thm:Brieskorn}.

\begin{proof}[Proof of Theorem~\ref{thm:lens}]
Lens spaces were discussed in Example~\ref{ex:Lpq}. Since $\pi_1$ is Abelian, there are no irreducible representations, and $\HP^*(L(p,q))=0$. To calculate $\HPf^*(L(p,q))$, note that $R(Y)$ is the disjoint union of some points and copies of $TS^2$. Over the points, the perverse sheaf $P^{\bullet}_\#(Y)$ is a copy of $\Z$ in degree $0$, and over each copy of $TS^2$, it is a local system with fibers $\Z$ in degree $-2$. Since $TS^2$ is simply connected, the local system is trivial. After taking cohomology, we get the advertised answer.
\end{proof}

\begin{proof}[Proof of Theorem~\ref{thm:Brieskorn}]
The Brieskorn spheres $\Sigma(p,q,r)$ were considered in Example~\ref{ex:Br3}. The variety $\CharIrr(\Sigma(p,q,r))$ consists of $N=(p-1)(q-1)(r-1)/4$ isolated points, so $\HP^*(\Sigma(p,q,r))$ is $\Z^N$ in degree zero.

To compute $\HPf^*$, recall that the representation variety is composed of a point and $N$ copies of $\psl \cong \rp^3 \times \R^3$. The perverse sheaf $P^{\bullet}_\#(Y)$ is $\Z$ over the point, and (by Lemma~\ref{lem:trivial}) the trivial local system with fiber $\Z$ in degree $-3$ over each copy of $\psl$. This gives the desired answer.
\end{proof}

Lastly, we consider $\HP^*$ for the Seifert fibered homology spheres $\Sigma(a_1, \dots, a_n)$ discussed in Example~\ref{ex:Br4}. Then, the variety $\CharIrr(Y)$ is the disjoint union of simply connected components $\M_{\alpha}$, of dimensions $2m(\alpha)-6$. It follows that 
\begin{equation}
\label{eq:SF}
\HP^*( \Sigma(a_1, \dots, a_n)) \cong \bigoplus_{\alpha} H^{*+2m(\alpha)-6}(\M_{\alpha}; \Z).
\end{equation}
The Poincar\'e polynomials of $\M_{\alpha}$ were computed in \cite{BodenYokogawa}.

\subsection{The Euler characteristic}
As noted in the Introduction, the Euler characteristic of Floer's $\su$ instanton homology is twice the Casson invariant; cf. \cite{TaubesCasson}. The Euler characteristic of the framed theory $I^\#(Y)$ is less interesting, being equal to the order of $H_1(Y)$ if $b_1(Y)=0$, and zero otherwise; cf. \cite{Scaduto}. 

In our context, we define the (sheaf-theoretic) {\em full $\sl$ Casson invariant} of $Y$ to be the Euler characteristic of $\HP^*(Y)$:
\begin{equation}
\label{eq:lambdaP}
 \lambda^P(Y) := \sum_{k \in \Z} (-1)^k \cdot \rk \HP^k(Y).
\end{equation}

The following proposition shows that the right hand side of \eqref{eq:lambdaP} is well-defined. 

\begin{proposition}
For any closed, oriented $3$-manfiold $Y$, the invariants $\HP^*(Y)$ and $\HPf^*(Y)$ are finitely generated as Abelian groups.
\end{proposition}

\begin{proof}
 By \cite[Theorem 3.1]{Bussi}, the intersection of complex Lagrangians is an (oriented) complex analytic d-critical locus. The perverse sheaf $\Pb(Y) = \Pb_{L_0, L_1}$ is isomorphic to the one constructed in \cite[Theorem 6.9]{BBDJS}. The manifold $M=\CharIrr(\Sigma)$ is also an algebraic variety, and the Lagrangians $L_0, L_1$ are algebraic. Thus, $L_0 \cap L_1$ is naturally an algebraic d-critical locus, and from this we get an algebraic perverse sheaf $\Pb_{\operatorname{alg}}(Y)$. By construction, $\Pb_{\operatorname{alg}}(Y)$ is taken to $\Pb(Y)$ by the forgetful functor from algebraic to complex analytic perverse sheaves. This implies that the cohomology sheaves of $\Pb(Y)$ are constructible for an algebraic stratification of $\CharIrr(Y)= L_0 \cap L_1$, which must have finitely many strata. We conclude that $\HP^*(Y)$ is finitely generated. A similar argument applies to $\HPf^*(Y)$.
\end{proof}

The invariant $\lambda^P$ should be contrasted with the $\sl$ Casson invariant of three-manifolds defined by Curtis in \cite{Curtis}, which we will denote by $\lambda^C$. Her invariant counts only isolated irreducible representations. 

For example, for the Brieskorn spheres $\Sigma(p,q,r)$, all the irreducible representations are isolated, and we have
$$\lambda^P(\Sigma(p,q,r)) = \lambda^C(\Sigma(p,q,r)) = (p-1)(q-1)(r-1)/4.$$

On the other hand, for the more general Seifert fibered homology spheres $\Sigma(a_1, \dots, a_n)$, by \cite[Theorem 2.7]{BodenCurtis}, we have
$$\lambda^C(\Sigma(a_1, \dots, a_n)) = \sum_{1 \leq i_1 < i_2 < i_3 \leq n} \frac{(a_{i_1} - 1)(a_{i_2}-1)(a_{i_3}-1)}{4}.$$

To calculate $\lambda^P(\Sigma(a_1, \dots, a_n))$, we use \eqref{eq:SF} and the fact that the Euler characteristic of the spaces $\M_{\alpha}$ is $(m(\alpha)-1)(m(\alpha)-2)2^{m(\alpha)-4}$; cf. \cite{BodenYokogawa}. We obtain
$$\lambda^P(\Sigma(a_1, \dots, a_n)) = \sum_{\alpha} (m(\alpha)-1)(m(\alpha)-2)2^{m(\alpha)-4},$$

For a concrete example, take the homology sphere $\Sigma(2,3,5,7)$. This has $23$ isolated irreducible representations, and six (complex two-dimensional) families of irreducibles with $m(\alpha)=4$.  Therefore,
$$ \lambda^C(\Sigma(2,3,5,7)) = 23 \ \ \text{but} \ \
\lambda^P(\Sigma(2,3,5,7)) = 23 + 6 \cdot 6 = 59.$$

\subsection{A bound on degrees} We now prove another result from in the Introduction.

\begin{proof}[Proof of Theorem~\ref{thm:ikshu}]
Note that $\Rep(Y)$ and $\Char(Y)$ are affine varieties, and $\CharIrr(Y) \subset \Char(Y)$ an open subvariety. In general, the hypercohomology of any perverse sheaf on a complex algebraic variety of dimension $d$ vanishes in degrees outside the interval $[-d, d]$; see for example \cite[Proposition 5.2.20]{Dimca}. Furthermore, as a consequence of the Artin vanishing theorem, if the underlying variety is affine, then the hypercohomology of a perverse sheaf is supported in non-positive degrees; see \cite[Corollary 5.2.18]{Dimca}.

If $Y$ has a Heegaard splitting of genus $g$, then the Lagrangians $L_i$ are isomorphic to $\CharIrr(F_g)$ and hence have complex dimension $3g-3$. The dimension of $\CharIrr(Y)$ is bounded above by this. This shows that $\HP^*(Y)$ is supported in degrees in the interval $[-3g+3, 3g-3]$. 

Similarly,  the dimension of $\Rep(Y)$ is bounded above by $3g$. Since $\Rep(Y)$ is affine, it follows that $\HPf^*(Y)$ is supported in degrees in $[-3g, 0]$.
\end{proof}

\section{Further directions}
\label{sec:Further}

\subsection{Other groups}
\label{sec:otherG}
The sheaf-theoretic Floer cohomologies defined in this paper were based on the Lie group $\sl$. One may ask about generalizations to other complex reductive Lie groups $G$. 

We refer to \cite{Sikora} for a discussion of $G$-representations of $\Gamma = \pi_1(M)$, where $M$ is either a surface or a $3$-manifold with boundary (such as a handlebody). Let us review a few  definitions and facts.

A representation $\rho: \Gamma \to G$ is called {\em irreducible} if $\rho(\Gamma)$ is not contained in any proper parabolic subgroup of $G$. Further, an irreducible representation $\rho$ is called {\em good} if the stabilizer of its image is the center of $G$. The categorical quotient $\Char_G(M) = \Hom(\pi_1(M), G)  \sslash \Gad$ is called the $G$-character variety. It has open subsets
$$  \Char_{G, \operatorname{good}}(M) \subset \Char_{G, \irr}(M) \subset \Char_G(M)$$
corresponding to the good, resp. irreducible representations. 

We will focus our attention on complex semisimple Lie groups $G$. For such groups, the Killing form on their Lie algebra $\g$ is non-degenerate. The existence of a symmetric, bilinear, invariant form on $\g$ is an ingredient in both Goldman's results on the symplectic structure nature of the fundamental group of surfaces \cite{Goldman}, and in our proof of stabilization invariance (where it gives the form $q$ needed in Proposition~\ref{prop:stabL}). 

Let $G$ be a complex semisimple Lie group, and $\Sigma$ a closed orientable surface of genus $g \geq 2$. Then, $\Char_{G, \irr}(\Sigma)$ is an orbifold, and its open subset $\Char_{G, \operatorname{good}}(\Sigma)$ is a smooth manifold. (See \cite[Proposition 5]{Sikora}.) Moreover, Goldman \cite{Goldman} showed that $\Char_{G, \operatorname{good}}(\Sigma)$ can be equipped with a holomorphic symplectic form. If we have a Heegaard decomposition $Y^3 = U_0 \cup_{\Sigma} U_1$, then the image of $\Char_G(U_i)$ in $\Char_G(\Sigma)$ intersects $\Char_{G, \operatorname{good}}(\Sigma)$ in a complex Lagrangian submanifold; cf. \cite[Theorem 6]{Sikora}. 

When $G = \sln$, we have the further nice property that all irreducible representations are good. Thus, $\Char_{G, \irr}(\Sigma)$ is a complex symplectic manifold, with Lagrangians coming from the Heegaard decomposition of $Y^3$. By applying Bussi's construction we obtain a perverse sheaf $P^{\bullet}(Y, G)$ over $\Char_{G, \irr}(Y)$. The same proof as in the $\sl$ case carries over to $\sln$, and we get that $P^{\bullet}(Y, G)$ is a natural invariant of $Y$. Its hypercohomology
$$  \HP^*(Y, G) := \HH^*( P^{\bullet}(Y, G))$$
is called the {\em sheaf-theoretic $\sln$ Floer cohomology of $Y$}. 

For other complex semisimple Lie groups, we could restrict to the open set consisting of good representations, and proceed as before. This is somewhat unnatural, but gives rise to invariants. A more challenging project would be to work on the orbifold $\Char_{G, \irr}(\Sigma)$, and produce invariants that take into account all irreducible flat connections. Of particular interest is the case $G = \psl$, which is the most relevant one for Witten's interpretation of Khovanov homology (cf. Section~\ref{sec:khovanov} below). We remark that in \cite{CurtisPSL}, Curtis defined a $\psl$ Casson invariant for three-manifolds; her invariant is a count of the isolated irreducible flat connections, with rational weights dictated by the orbifold structure.

With regard to constructing framed (sheaf-theoretic) Floer cohomologies, for $G=\sln$ we can draw inspiration from the constructions of $U(n)$ Floer homologies in \cite{KMknots} and \cite{WWFloerField}. Specifically, for $\Sigma$ and $\Sigma^{\#} = \Sigma \# T^2$ with a basepoint $w \in T^2$ encircled by a curve $\gamma$ as before, and for any integer $d$ relatively prime to $n$, we consider a twisted character variety
$$ \Char_{n, d, \tw}(\Sigma^{\#}) = \{\rho: \pi_1(\Sigma^{\#} \setminus \{w\}) \to G \mid \rho(\gamma)= \exp({2\pi i d/n} ) \cdot I \} / \Gad.$$
This is a complex symplectic manifold, and a Heegaard decomposition of $Y$ along $\Sigma$ produces two Lagrangians inside $\Char_{n, d, \tw}(\Sigma^{\#})$, just as in Section~\ref{sec:Lags}. We are using here that $\Char_{n, d, \tw}(T^2)$ is a point. We get that the intersection of the two Lagrangians can be identified with the representation variety of $Y$, and Bussi's construction gives a perverse sheaf $P^{\bullet}_{\#}(Y,z)$ on that variety. Invariance can be proved as in Section~\ref{sec:Invariant}.

\subsection{Extensions}
\label{sec:extensions}
Going back to the case $G=\sl$, there are a number of ways one could try to extend the constructions in this paper:
\begin{itemize}
\item There should be versions of the sheaf-theoretic Floer cohomology for admissible $\gl$ bundles, and for knots and links in three-manifolds;
\item
There should be a $\psl$-equivariant sheaf-theoretic Floer cohomology of three-manifolds, which involves both the reducibles and the irreducibles; 
\item
An alternate construction of three-manifold invariants should be given using derived algebraic geometry, cf. Remark~\ref{rem:PTVV}; 
\item
Similar invariants to those in this paper could be constructed using the theory of deformation quantization modules;
\item
We expect our invariants to be functorial under four-dimensional cobordisms, and thus part of  $3+1$ dimensional TQFTs, based on the Kapustin-Witten or Vafa-Witten equations; 
\item
We expect that $\HP^*$ can be categorified to give an $A_{\infty}$-category associated to the three-manifold, in the spirit of  \cite{KapustinRozansky}, \cite{Haydys}, or \cite{GMWfans};
\item
One can investigate the effect on $\HP^*$ or $\HP^*_{\#}$ induced by varying the complex structure on the moduli space of flat connections.
\end{itemize}

\subsection{Relation to Khovanov homology}
\label{sec:khovanov}
In \cite{Khovanov}, Khovanov defined a homology theory for knots and links in $\R^3$, now known as Khovanov homology.  Witten \cite{FiveBranes} conjectured that the Khovanov homology of a link $L \subset S^3$ can be understood as a version of Floer homology, using the Haydys-Witten equation on $\R^3 \times \R_+ \times \R$, with certain boundary conditions. The generators of this Floer complex are solutions to the Kapustin-Witten equations \cite{KapustinWitten} on $\R^3 \times \R_+$.

Extending Khovanov homology to an invariant of links in arbitrary three-manifolds is an open problem. It is natural to attempt to do so by considering the Haydys-Witten equations on $Y \times \R_+ \times \R$, where $Y$ is any three-manifold. There are formidable analytical difficulties to be overcome in order to carry out this program, having to do with non-compactness of the moduli spaces; see \cite{Taubes3, Taubes4, Taubes5}. We refer to \cite{GPV}, \cite{GPPV} for some expectations about the resulting invariants, coming from the physics perspective. 

The sheaf-theoretic invariant $\HP^*(Y)$ constructed in this paper is a small step in this program. It is meant to give $\sl$ Floer homology, which can be thought of as encoding information from the Kapustin-Witten equations on $Y \times \R$. We can view $\sl$ Floer homology as the space of integration cycles (thimbles) for the complex Chern-Simons functional, as in \cite{WittenAC}, \cite{FiveBranes}, \cite{WittenLectures1}. To obtain analogues of the Jones polynomial, one would need to also introduce the boundary conditions at $Y \times \{0\}$. Moreover, to get to analogues of Khovanov homology, one would then need to categorify these  invariants.

\subsection{An open question} Zentner \cite{Zentner} proved that if $Y$ is a non-trivial integral homology $3$-sphere, then $\pi_1(Y)$ admits an irreducible representation into $\sl$.

\begin{question}
Can one use Zentner's result to prove that $\HP^*(Y)$ detects $S^3$ among homology spheres? 
\end{question}

\bibliographystyle{custom}
\bibliography{biblio}

\end{document}